\newcommand{\dz}{{\text{d}_{G}} }
\newcommand{\supp}{\text{supp }}
\newtheorem{theorem}[equation]{Theorem}
\newtheorem{corollary}[equation]{Corollary}
\newtheorem{lemma}[equation]{Lemma}
\newtheorem{proposition}[equation]{Proposition}
\newtheorem{definition}{Definition}[section]
\numberwithin{equation}{section}
\begin{document}

\title[Hardy spaces for the Dunkl harmonic oscillator]{Hardy spaces for the Dunkl harmonic oscillator}

\subjclass[2000]{42B30, 35K08, 42B25, 42B35, 47D03}
\keywords{Rational Dunkl theory, Hardy spaces, Riesz transforms, maximal operators.}

\begin{abstract}
Let $\Delta$ and $L=\Delta -\|\mathbf x\|^2$ be the Dunkl Laplacian and the Dunkl  harmonic oscillator respectively. We define the Hardy space $\mathcal H^1$ associated with the Dunkl harmonic oscillator by means of the nontangential maximal function with respect to the semigroup $e^{tL}$. We prove that the space $\mathcal H^1$  admits characterizations by relevant Riesz transforms and atomic decompositions. The atoms which occur in the atomic decompositions are of local type.
\end{abstract}

\author[Agnieszka Hejna]{Agnieszka Hejna}

\address{Instytut Matematyczny\\
Uniwersytet Wroc\l awski\\
50-384 Wroc\l aw \\
Pl. Grunwal\-dzki 2/4\\
Poland} \email{agnieszka.hejna@math.uni.wroc.pl}

\thanks{
Reseach supported by the National Science Centre, Poland (Narodowe Centrum Nauki), Grant 2017/25/B/ST1/00599.}

\maketitle
\section{Introduction}
The classical real Hardy spaces $H^p$ in $\mathbb R^N$ occurred as boundary values of harmonic functions on $\mathbb{R}_{+}\times \mathbb{R}^{N}$  satisfying generalized Cauchy--Riemann equations together with certain $L^p$ bound conditions (see e.g. Stein--Weiss \cite{SW}). In the seminal paper of Fefferman and Stein~\cite{FS} the spaces $H^p$ were characterized by means of real analysis. So a tempered distribution $f$ belongs to the $H^p(\mathbb R^N)$, $0<p<\infty$, if and only if the maximal function
$\sup_{t>0} |h_t \ast f(\mathbf{x})|$ belongs to $L^p(\mathbb R^N)$, where $h_t$ is the heat kernel of the semigroup $e^{t\Delta}$. Another characterization, which relates the elements of $H^p$ with the system of conjugate harmonic functions, is that by the Riesz transforms. Assume that $(N-1)\slash N<p\leq 1$. Then $f$ is in $H^p$ if and only if the Riesz transforms $\partial_{x_j}(-\Delta)^{-1\slash 2}f$, $j=1,2,...,N$,  are in $L^p(\mathbb R^N)$. An important contribution to the theory is atomic decomposition proved by Coifman \cite{Coifman} for $N=1$  and Latter \cite{Latter} in higher dimensions, which says that every element of $H^p$ can be written as an (infinite) combination of special simple functions called atoms. These characterizations led to generalizations of Hardy spaces on  spaces of homogeneous type or to semigroups of linear operators.

In \cite{conjugate} (see also  \cite{Anker15}, \cite{Dziubanski16},~\cite{Dziubanski18})
the authors develop  a theory of Hardy spaces $H^1$ in the rational Dunkl setting parallel to the classical one. For a root system $R$ in $\mathbb R^N$, a multiplicity function $k \geq 0$, and associated Dunkl Laplacian $\Delta$,  systems of conjugate $(\partial_t^2+\Delta)$-harmonic functions in $\mathbb R_+\times \mathbb R^N$ satisfying a relevant $L^1(dw)$ condition are studied there. Here $w$ denotes a natural measure associated with  $k$ (see~\eqref{eq:measure}). It is proved in \cite{conjugate} that boundary values of such harmonic  functions, which constitute the real Hardy space $H^1_{\Delta}$, can be characterized by relevant Riesz transforms, maximal functions, atomic decompositions, and square functions.

The present paper is a continuation of \cite{conjugate} and concerns real Hardy space $\mathcal{H}^1$ for the Dunkl harmonic oscillator $L =\Delta-\|\mathbf{x}\|^2$. For any $1\leq p<\infty$, an extension of $L $ is the infinitesimal generator the semigroup $K_t=e^{tL}$ on $L^p(w)$, which has the form
\begin{equation*}
K_tf(\mathbf{x})=\int_{\mathbb{R}^N}k_t(\mathbf{x},\mathbf{y})f(\mathbf{y})\,dw(\mathbf{y}),
\end{equation*}
where the integral kernel $k_t(\mathbf{x},\mathbf{y})$ can be expressed by means of generalized Hermite functions (see e.g., \cite{Roesler2}, \cite{Roesler-Voit}).

\begin{definition}\normalfont\label{def:Hardy_Dunkl_Hermite}
 Let $f \in L^{1}(\mathbb{R}^N,dw)$. We say that $f$ belongs to the \textit{Hardy space} $\mathcal{H}^1$ associated with the Dunkl harmonic oscillator $L $ if and only  if
\begin{equation}
f^{*}(\mathbf{x})=\sup_{t>0}|K_tf(\mathbf{x})|
\end{equation}
belongs to $L^1(\mathbb{R}^N,dw)$. The norm in the space is given by
\begin{equation}
\|f\|_{\mathcal{H}^1}=\|f^{*}\|_{L^{1}(\mathbb{R}^N,dw)}.
\end{equation}
\end{definition}
Our goal is to prove that the space $\mathcal H^1$ is equivalently characterized by relevant  Riesz transforms and by atomic decompositions. The atoms for $\mathcal H^1$, which will be elaborately explained in the next section,  are of local type. In order to obtain the characterizations for $\mathcal H^1$ we first define and characterize family local Hardy spaces associated with the Dunkl Laplace operator $\Delta$ (see Section \ref{sec:localHardy}). Further the theory of local the Hardy spaces combined with a detailed analysis of the integral kernels for the semigroups $e^{tL }$ and $e^{t\Delta}$  allow us to obtain our  results.

The theory of Dunkl operators was initiated in the series of works of Dunkl   \cite{Dunkl0}--\cite{Dunkl3} and continued by many mathematicians afterwards (see, e.g. \cite{dJ}, \cite{RoeslerDeJeu}, \cite{Roesler2}, \cite{Roesler4}, \cite{ThangaveluXu}). We refer the reader to lecture notes \cite{Rosler} and \cite{Roesler-Voit}  for more information and references.

In \cite{HMMLY} the authors provided a general approach to the theory of Hardy spaces associated with semigroups satisfying Davies--Gaffney estimates and in particular Gaussian bounds.  We want to  emphasise that the integral kernels associated with the Dunkl Laplace operator and Dunkl harmonic oscillator semigroups do not satisfy Gaussian bounds.   Therefore the methods developed in \cite{HMMLY}  cannot be directly applied.

\section{Preliminaries and statements of the results}

On the Euclidean space $\mathbb R^N$ equipped with the standard inner product $\langle \mathbf{x}, \mathbf{y} \rangle = \sum_{j=1}^{N}x_jy_j$ with the associated norm $\|\mathbf{x}\|$ and a nonzero vector $\alpha\in\mathbb R^N$, the reflection $\sigma_\alpha$ with respect to the orthogonal hyperplane $\alpha^\perp$  is
given by
\begin{align*}
\sigma_\alpha (\mathbf{x})= \mathbf{x}-2\frac{\langle  \mathbf{x},\alpha\rangle}{\| \alpha\| ^2}\alpha.
\end{align*}

A finite set $R\subset \mathbb R^N\setminus\{0\}$ is called a {\it root system} if $\sigma_\alpha (R)=R$ for every $\alpha\in R$. We shall consider any normalized  root systems, that is, $\|\alpha\|^2=2$ for every $\alpha\in R$. The finite group $G\subset O(N)$ generated by the reflections $\sigma_\alpha$ is called the {\it Weyl group} ({\it reflection group}) of the root system. A function $k:R\to\mathbb C$ is called a {\it multiplicity function} if $k$  is  $G$-invariant. In this paper we shall assume that $k\geq 0$.

Given a root system $R$ and a multiplicity function $k$, the \textit{Dunkl operators} $T_\xi$, $\xi \in \mathbb{R}^N$, are the following deformations of directional derivatives $\partial_\xi$ by difference operators\,:
\begin{align*}
T_\xi f(\mathbf{x})
&=\partial_\xi f(\mathbf{x})+\sum_{\alpha\in R}\frac{k(\alpha)}2\langle\alpha,\xi\rangle\frac{f(\mathbf{x})\!-\!f(\sigma_\alpha(\mathbf{x}))}{\langle\alpha,\mathbf{x}\rangle}\\
&=\partial_\xi f(\mathbf{x})+\hspace{-1mm}\sum_{\alpha\in R^+}\hspace{-1mm}k(\alpha)\langle\alpha,\xi\rangle\frac{f(\mathbf{x})\!-\!f(\sigma_\alpha(\mathbf{x}))}{\langle\alpha,\mathbf{x}\rangle}.
\end{align*}
Here $R^+$ is any fixed positive subsystem of $R$. The Dunkl operators $T_\xi$, which were introduced in~\cite{Dunkl}, commute pairwise and are skew-symmetric with respect to the $G$-invariant measure $dw(\mathbf{x})=w(\mathbf{x})\,d\mathbf{x}$, where
\begin{equation}\label{eq:measure}
w(\mathbf{x})
=\prod_{\alpha\in R}|\langle \alpha, \mathbf{x}\rangle|^{k(\alpha)}
=\hspace{-1mm}\prod_{\alpha\in R^+}\hspace{-1mm}|\langle \alpha, \mathbf{x}\rangle|^{2k(\alpha)}.
\end{equation}
Set $T_j=T_{e_j}$, where $\{e_1,\dots,e_N\}$ is the canonical basis of $\mathbb{R}^N$.

Let us denote a Euclidean ball centered at $\mathbf{x} \in \mathbb{R}^N$ of radius $R$ by $B(\mathbf{x},R)$. It is known (see e.g.~\cite{conjugate}) that
\begin{equation}\label{eq:measure_of_ball_formula}
w(B(\mathbf{x},r))\sim r^{N}\prod_{\alpha\in R}(\,|\langle\alpha,\mathbf{x}\rangle|+r\,)^{k(\alpha)},
\end{equation}
hence
\begin{equation}\label{eq:measure_of_balls}
C^{-1}\Big(\frac{R}{r}\Big)^{N}\leq\frac{w(B(\mathbf{\mathbf{x}},R))}{w(B(\mathbf{\mathbf{x}},r))}\leq C \Big(\frac{R}{r}\Big)^{\mathbf N}
\end{equation}
for any $\mathbf{x} \in \mathbb{R}^N$ and $R \geq r>0$. Here and subsequently,
\begin{equation}
\mathbf{N}=N+\sum_{\alpha \in R^{+}}2k(\alpha).
\end{equation}
The number $\mathbf{N}$ is called the homogeneous dimension. The measure $w$ is doubling i.e.,
\begin{equation}
w(B(\mathbf{x},R)) \asymp w(B(\mathbf{x},2R)).
\end{equation}

We define the \textit{distance of the orbits} associated with the group $G$ by
\begin{equation}
\dz(\mathbf{x},\mathbf{y})=\min_{g \in G}\|\mathbf{x}-g(\mathbf{y})\|
\end{equation}
for $\mathbf{x},\mathbf{y} \in \mathbb{R}^N$, where $\|\mathbf{x}-g(\mathbf{y})\|$ is the Euclidean distance of $\mathbf{x}$ and $g(\mathbf{y})$.

For any ball $B(\mathbf{x},R)$, we write
\begin{equation}
\mathcal{O}(B(\mathbf{x},R))=\bigcup_{g \in G}B(g(\mathbf{x}),R).
\end{equation}
Clearly,
\begin{equation}
w(B(\mathbf{x},R)) \leq w(\mathcal{O}(B(\mathbf{x},R))) \leq |G|w(B(\mathbf{x},R)).
\end{equation}

In the paper we consider the \textit{Dunkl harmonic oscillator}
\begin{equation}
L f(\mathbf{x})=\Delta f(\mathbf{x})-\|\mathbf{x}\|^2f(\mathbf{x}),
\end{equation}
where
\begin{equation}
\Delta f(\mathbf{x})=\sum_{j=1}^{N}T_{j}^2f(\mathbf{x})
\end{equation}
is the \textit{Dunkl Laplacian}.
It is known that the operators $\Delta$ and $L $, initially defined on the Schwartz class $\mathcal S(\mathbb R^N)$, have extensions to  the infitesimal generators of the Dunkl heat semigroup ${H_{t}}=e^{t\Delta}$ and the Dunkl--Hermite semigroup $K_t=e^{tL }$, respectively (see, e.g., \cite{Roesler2}, \cite{Rosler} and \cite{Roesler-Voit}). These semigroups act by linear self--adjoint operators on $L^2(\mathbb{R}^N,dw)$ and by linear contractions on $L^p(\mathbb{R}^N,dw)$ for  $1 \leq p \leq \infty$.

Set
\begin{equation}
\label{eq:rho}
\rho(\mathbf{x})=\frac{1}{1+\|\mathbf{x}\|}.
\end{equation}

We are in a position to state our main results.

\subsection{Characterization by atomic decomposition}

\begin{definition}\label{def:atomic_Dunkl_Hermite}\normalfont
Let $A>1$ be a constant large enough, which will be chosen later. A measurable function $a(\mathbf{x})$ is called an \textit{atom for the Hardy space} $\mathcal{H}^{1,{{\rm{at}}}}$  associated with the Dunkl harmonic oscillator $L $ if
\begin{enumerate}[(A)]
\item{$\supp a \subset B(\mathbf{x}_0,r)$ for some $\mathbf{x}_0 \in \mathbb{R}^N$,}\label{numitem:supports}
\item{$\sup_{\mathbf{y} \in \mathbb{R}^N}|a(\mathbf{y})|\leq w(B(\mathbf{x}_0,r))^{-1}$,}\label{numitem:size}
\item{$r \leq A\rho(\mathbf{x}_0)$,}\label{numitem:bounded_radius}
\item{if $r < \rho(\mathbf{x}_0)$, then $\int_{\mathbb{R}^N}a(\mathbf{x})\,dw(\mathbf{x})=0$.}\label{numitem:cancellation}
\end{enumerate}
 The \textit{atomic Hardy space} $\mathcal{H}^{1,{{\rm{at}}}}$ assosiated with the Dunkl harmonic oscillator is the space of functions $f \in L^1(\mathbb{R}^N,dw)$ which admit a representation of the form
\begin{equation}
f(\mathbf{x})=\sum_{j=1}^{\infty}c_j a_j(\mathbf{x}),
\end{equation}
where $c_j \in \mathbb{C}$ and $a_j$ are atoms for the Hardy space $\mathcal{H}^{1,{{\rm{at}}}}$ such that $\sum_{j=1}^{\infty}|c_j|<\infty$. The space $\mathcal{H}^{1,{{\rm{at}}}}$ is a Banach space with the norm
\begin{equation}
\|f\|_{\mathcal{H}^{1,{{\rm{at}}}}}=\inf\left\{\sum_{j=1}^{\infty}|c_j|:  f(\mathbf{x})=\sum_{j=1}^{\infty}c_j a_j(\mathbf{x}) \text{ and } a_j \text{ are }\mathcal{H}^{1,{{\rm{at}}}}\text{ atoms}\right\}.
\end{equation}

\end{definition}
We are now in a position to state our first main results.
\begin{theorem}\label{teo:atomic_Dunkl_Hermite}
The spaces $\mathcal{H}^1$ and $\mathcal{H}^{1,{{\rm{at}}}}$ coincide and their norms are equivalent.
\end{theorem}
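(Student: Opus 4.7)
The strategy is to factor the equivalence through a local Hardy space $h^1_\Delta$ attached to the Dunkl Laplacian (to be introduced in Section~\ref{sec:localHardy}), which should admit both a truncated-maximal characterization with $\sup_{0<t\leq\rho(\mathbf{x})^2}|H_tf(\mathbf{x})|$ and an atomic decomposition whose atoms essentially coincide with those of Definition~\ref{def:atomic_Dunkl_Hermite}. Both inclusions $\mathcal{H}^{1,\mathrm{at}}\hookrightarrow\mathcal{H}^1$ and $\mathcal{H}^1\hookrightarrow\mathcal{H}^{1,\mathrm{at}}$ are then reduced, via $h^1_\Delta$, to a quantitative comparison of the semigroups $K_t$ and $H_t$ at the two scales $t\leq\rho(\mathbf{x})^2$ (where they should be close) and $t\geq\rho(\mathbf{x})^2$ (where the perturbation $-\|\mathbf{x}\|^2$ produces strong $L^1(dw)$ decay of $K_t$).

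The core technical ingredient is a kernel comparison based on Duhamel's formula
\[
H_tf-K_tf=\int_0^t K_{t-s}\bigl(\|\cdot\|^2\,H_s f\bigr)\,ds,
\]
combined with the sharp upper estimates on $h_t(\mathbf{x},\mathbf{y})$ and $k_t(\mathbf{x},\mathbf{y})$ expressed in terms of the orbit distance $\dz(\mathbf{x},\mathbf{y})$ and the volume $w(B(\mathbf{x},\sqrt{t}))$ available in the Dunkl setting. The aim is an estimate of the form
\[
|h_t(\mathbf{x},\mathbf{y})-k_t(\mathbf{x},\mathbf{y})|\leq C\,t\,(\|\mathbf{x}\|^2+\|\mathbf{y}\|^2)\cdot\mathcal{G}_t(\mathbf{x},\mathbf{y}),
\]
where $\mathcal{G}_t$ inherits the same type of Dunkl-type decay as $h_t$. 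On the regime $t\leq\rho(\mathbf{x})^2$ the prefactor $t\|\mathbf{x}\|^2$ is bounded and provides the required smallness; on the regime $t\geq\rho(\mathbf{x})^2$ one invokes a Mehler-type representation of $k_t$ together with the contractivity of $K_t$ on $L^1(dw)$ to get exponential decay of $\sup_{t\geq\rho(\mathbf{x})^2}|K_tf(\mathbf{x})|$ in $L^1(dw)$.

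For $\mathcal{H}^{1,\mathrm{at}}\hookrightarrow\mathcal{H}^1$ I would prove $\|a^*\|_{L^1(dw)}\leq C$ uniformly in atoms. Write $K_ta=H_ta+(K_t-H_t)a$. The first term is handled because $a$ is \emph{automatically} an $h^1_\Delta$-atom (conditions (A)--(D) of Definition~\ref{def:atomic_Dunkl_Hermite} match the local ones), so by the local maximal characterization its $H_t$-maximal has bounded $L^1(dw)$-norm. The second term is controlled by integrating the kernel comparison against $a$ over the annular decomposition around the orbit $\mathcal{O}(B(\mathbf{x}_0,r))$, using doubling of $dw$; the case $r<\rho(\mathbf{x}_0)$ uses the cancellation (D), and the case $r\sim\rho(\mathbf{x}_0)$ uses only (A)--(C). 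Conversely, for $\mathcal{H}^1\hookrightarrow\mathcal{H}^{1,\mathrm{at}}$, the same comparison yields a pointwise bound
\[
\sup_{0<t\leq\rho(\mathbf{x})^2}|H_tf(\mathbf{x})|\leq C\bigl(f^*(\mathbf{x})+\mathcal{E}f(\mathbf{x})\bigr),
\]
where $\mathcal{E}f$ is an acceptable error term controlled in $L^1(dw)$ by $\|f\|_{\mathcal{H}^1}$. Hence $f\in h^1_\Delta$, and the local atomic decomposition from Section~\ref{sec:localHardy} produces $f=\sum_j c_ja_j$ with atoms satisfying (A)--(D), possibly after enlarging $A$ and grouping atoms whose balls cross the threshold $r=\rho(\mathbf{x}_0)$.

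The hardest step will be the kernel comparison itself, because $h_t$ does not satisfy Gaussian upper bounds but only the weaker Dunkl-type estimates involving $\dz$ and the non-Euclidean volume $w(B(\mathbf{x},\sqrt{t}))$, which degenerates near reflection hyperplanes. The Duhamel integral must be estimated so as to preserve this nonstandard kernel form \emph{and} absorb the factor $\|\mathbf{z}\|^2$ into the gain $t\|\mathbf{x}\|^2\lesssim 1$; the convolution of two such kernels against the polynomial weight is the delicate technical point. Once this estimate is secured, the remaining arguments parallel the classical treatment of Hermite--Schr\"odinger Hardy spaces, adapted to the orbit geometry of the Weyl group.
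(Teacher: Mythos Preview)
Your overall strategy---factor through a local Hardy theory for the Dunkl Laplacian and reduce everything to a kernel comparison between $h_t$ and $k_t$---is the same as the paper's. The execution differs in two notable ways.

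\textbf{Kernel comparison.} You propose Duhamel's formula to bound $h_t-k_t$. The paper instead exploits the \emph{exact} Mehler-type identity
\[
k_t(\mathbf{x},\mathbf{y})=\exp\Bigl(\bigl(\tfrac{1}{4t_1}-\tfrac{1}{2}\coth(2t)\bigr)(\|\mathbf{x}\|^2+\|\mathbf{y}\|^2)\Bigr)\,h_{t_1}(\mathbf{x},\mathbf{y}),\qquad t_1=\tfrac{1}{2}\sinh(2t),
\]
and then splits $h_t-k_t=(h_t-h_{t_1})+(h_{t_1}-k_t)$. The first piece is controlled by $|t-t_1|\lesssim t^3$ and the time-derivative estimate for $h_t$; the second is just $(1-e^{-\gamma(t)(\|\mathbf{x}\|^2+\|\mathbf{y}\|^2)})h_{t_1}$ with $\gamma(t)\lesssim t$. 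This yields the same qualitative bound you aim for, but with essentially no work---no convolution of two Dunkl-type kernels against a polynomial weight. Your Duhamel route is viable (since the orbit distance $\dz$ obeys the triangle inequality and the Weyl group preserves $\|\cdot\|$, so $\|\mathbf{z}\|^2\lesssim\|\mathbf{x}\|^2+\dz(\mathbf{x},\mathbf{z})^2$ can be absorbed into the Gaussian), but it is substantially more technical and you yourself flag the convolution step as the hardest point. The explicit formula removes that difficulty entirely.

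\textbf{Localization.} You propose a single variable-scale local space $h^1_\Delta$ defined by $\sup_{0<t\leq\rho(\mathbf{x})^2}|H_tf(\mathbf{x})|$. The paper instead develops a family of \emph{constant-scale} local Hardy spaces $H^1_{\mathrm{loc},T}$ (maximal function $\sup_{0<t\leq T^2}|H_tf|$, with Goldberg-type atomic characterization uniform in $T$), then uses a partition of unity $\{\psi_m\}$ subordinate to balls $B(\mathbf{y}_m,\rho(\mathbf{y}_m))$ to reduce to $\sum_m\|f\psi_m\|_{H^1_{\mathrm{loc},\rho(\mathbf{y}_m)}}\lesssim\|f\|_{\mathcal{H}^1}$. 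The commutator term $K_t(f\psi_m)-\psi_m K_tf$ is handled via a Lipschitz estimate on $\psi_m$ combined with the factor $(1+\|\mathbf{x}-\mathbf{y}\|/\sqrt{t})^{-2}$ in the Dunkl heat bound. Your global variable-scale approach can be made to work, but proving an atomic decomposition directly for it amounts to redoing the partition-of-unity argument inside the proof; the paper's formulation separates this cleanly.

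In short: your plan is sound, but the paper's use of the exact Mehler identity in place of Duhamel is the key simplification you are missing, and the fixed-$T$ local spaces plus partition of unity make the reverse inclusion more transparent than a global variable-scale space.
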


Let us remark that we have two type of atoms for ${\mathcal{H}^{1,{{\rm{at}}}}}$. All of them are supported by $B(\mathbf x, r)$ with $r<A\rho (\mathbf x)$ and satisfy the size condition~\eqref{numitem:size}. However, if $r<\rho (x)$, then the additional condition~\eqref{numitem:cancellation} is required. Let us also note that the atoms for ${\mathcal{H}^{1,{{\rm{at}}}}}$ have a similar type of localization, given by the function $\rho$, to those for the Hardy space associated with the classical harmonic oscillator (cf. \cite{DzZien},~\cite{Dziubanski97}).

\subsection{Characterization by the Riesz transform}
\begin{definition}\label{def:Riesz_Dunkl_Hermite}\normalfont
For $j=1,2,\ldots,N$, we define the  \textit{Riesz transform} $\widetilde{R}_j$ associated with the Dunkl harmonic oscillator by
\begin{equation}
\widetilde{R}_jf(\mathbf{x})=T_jL ^{-1\slash 2}=d_k\int_{0}^{\infty}T_jK_tf(\mathbf{x})\,\frac{dt}{\sqrt{t}},
\end{equation}
where $d_k=2^{\mathbf{N}/2}\Gamma((\mathbf{N}+1)/2)\pi^{-1/2}$.
\end{definition}

It was shown by Amri~\cite{Amri15} that $\widetilde{R}_j$ are bounded operators on $L^p(\mathbb{R}^N,dw)$ for $1<p<\infty$, and of weak type $(1,1)$.

The second goal of this paper is about characterization of $\mathcal{H}^1$ by $\widetilde{R}_j$.
\begin{theorem}\label{teo:Riesz_Hermite}
Let $f \in L^1(\mathbb{R}^N,dw)$. Then $f \in \mathcal{H}^1$ if and only if $\widetilde{R}_j f \in L^1(\mathbb{R}^N,dw)$ for $j=1,2,\ldots,N$. Moreover,
\begin{equation}
\label{eq:Riesz}
\|f\|_{\mathcal{H}^1} \lesssim \|f\|_{L^1(\mathbb{R}^N,dw)}+\sum_{j=1}^{N}\|\widetilde{R}_jf\|_{L^1(\mathbb{R}^N,dw)} \lesssim \|f\|_{\mathcal{H}^1}\footnote{As usual, the symbol $\lesssim$ means that there exists a constant $C>0$ such that $\|f\|_{\mathcal{H}^1} \leq C \left(\|f\|_{L^1(\mathbb{R}^N,dw)}+\sum_{j=1}^{N}\|\widetilde{R}_jf\|_{L^1(\mathbb{R}^N,dw)}\right)$.}.
\end{equation}
\end{theorem}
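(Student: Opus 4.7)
The plan is to leverage the atomic characterisation from Theorem~\ref{teo:atomic_Dunkl_Hermite} and to compare $\widetilde R_j=T_jL^{-1/2}$ with a localised Riesz transform for the Dunkl Laplacian, the latter being accessible through the local Hardy space $h^1_\Delta$ theory developed in Section~\ref{sec:localHardy}. The key observation is that $-L$ has a strictly positive spectral gap on $L^2(dw)$, so $\|K_t\|_{L^2\to L^2}$ decays exponentially in $t$; this makes the splitting
\[
T_jL^{-1/2}=d_k\int_0^{1}T_jK_t\,\frac{dt}{\sqrt t}+d_k\int_1^{\infty}T_jK_t\,\frac{dt}{\sqrt t}
\]
useful, with the first piece behaving like a local Riesz transform for $\Delta$ and the second controlled by the rapid decay of $K_t$.

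For the forward implication $\|f\|_{\mathcal H^1}\lesssim\|f\|_{L^1(dw)}+\sum_j\|\widetilde R_j f\|_{L^1(dw)}$, I would transfer the hypothesis to the analogous statement for a local Dunkl Riesz transform such as $T_j(I-\Delta)^{-1/2}$ (modulo an $L^1$-bounded error coming from the long-time tail above), invoke the Riesz characterisation of $h^1_\Delta$ from Section~\ref{sec:localHardy} to obtain a local atomic decomposition of $f$, and identify the resulting local atoms with those of Definition~\ref{def:atomic_Dunkl_Hermite}; Theorem~\ref{teo:atomic_Dunkl_Hermite} then yields $f\in\mathcal H^1$ with the desired bound. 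For the reverse implication I would again use Theorem~\ref{teo:atomic_Dunkl_Hermite} to reduce matters to proving $\|\widetilde R_j a\|_{L^1(dw)}\lesssim 1$ uniformly over atoms $a$ supported in $B(\mathbf x_0,r)$ with $r\le A\rho(\mathbf x_0)$. On $\mathcal O(B(\mathbf x_0,2r))$, Cauchy--Schwarz combined with the $L^2$-boundedness of $\widetilde R_j$ (Amri~\cite{Amri15}), the size bound~\eqref{numitem:size} and the doubling property yields a controlled contribution. Off this orbit I would expand
\[
\widetilde R_j a(\mathbf x)=d_k\int_0^\infty\!\!\int_{\mathbb R^N}(T_j^{(\mathbf x)}k_t)(\mathbf x,\mathbf y)\,a(\mathbf y)\,dw(\mathbf y)\,\frac{dt}{\sqrt t},
\]
split the $t$-integral at $r^2$, and combine a Calder\'on--Zygmund-type gain in $r/\dz(\mathbf x,\mathbf x_0)$ (using the cancellation~\eqref{numitem:cancellation} when $r<\rho(\mathbf x_0)$) in the small-time range with the exponential decay of $K_t$ in the large-time range. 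When $\rho(\mathbf x_0)\le r\le A\rho(\mathbf x_0)$ one has $r\sim\rho(\mathbf x_0)$, so the absence of a vanishing moment is compensated by the fact that the matching $t\sim\rho(\mathbf x_0)^2$ threshold is already deep inside the region where $K_t$ enjoys strong decay.

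The main obstacle is the failure of Gaussian bounds for $k_t(\mathbf x,\mathbf y)$ in the rational Dunkl setting, a point emphasised after \cite{HMMLY} in the introduction. Off-diagonal decay of $k_t$ must be measured in the orbit distance $\dz$ rather than in Euclidean distance, and applying $T_j$, a nonlocal operator involving reflection differences, typically amplifies singular behaviour near the reflection hyperplanes. Producing pointwise bounds on $(T_j^{(\mathbf x)}k_t)(\mathbf x,\mathbf y)$ sharp enough to be simultaneously integrable in $t$ across the $t=r^2$ split and in $\mathbf x$ against $dw$ uniformly in $\mathbf y\in\supp a$ is thus the technical heart of the argument, and presumably rests on a refined semigroup analysis in the spirit of~\cite{conjugate}.
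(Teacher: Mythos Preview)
Your reverse implication (bounding $\|\widetilde R_j a\|_{L^1}$ for atoms) is a viable Calder\'on--Zygmund strategy, though the paper takes a cleaner route: it proves a kernel comparison $\|\widetilde R_jf-R_j^{\rho(\mathbf y_0)}f\|_{L^1}\lesssim\|f\|_{L^1}$ for $f$ supported in $B(\mathbf y_0,A\rho(\mathbf y_0))$ (Corollary~\ref{coro:Riesz_difference}, built from Lemmas~\ref{lem:estimation_T_jH_t}--\ref{lem:R_j_tail}), and then invokes the local Riesz characterisation (Proposition~\ref{propo:local_Riesz}) together with Lemma~\ref{lem:atom_in_H1}. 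This avoids a direct off-ball kernel analysis of $T_{j,\mathbf x}k_t$ against the atom and in particular sidesteps the delicate issue you flag in your last paragraph.

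Your forward implication, however, has a structural gap. You propose to pass from $\widetilde R_jf\in L^1$ to $T_j(I-\Delta)^{-1/2}f\in L^1$ (or to $R_j^Tf\in L^1$ for a \emph{fixed} $T$, say $T=1$), then use Proposition~\ref{propo:local_Riesz} to put $f$ in a single local Hardy space $H^1_{\mathrm{loc},T}$, and finally ``identify the resulting local atoms with those of Definition~\ref{def:atomic_Dunkl_Hermite}.'' That identification fails: an $H^1_{\mathrm{loc},1}$ atom supported in $B(\mathbf x_0,r)$ with $r<1$ need not satisfy $r\le A\rho(\mathbf x_0)$ when $\|\mathbf x_0\|$ is large, so it is not an $\mathcal H^{1,\mathrm{at}}$ atom. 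The localisation scale in Definition~\ref{def:atomic_Dunkl_Hermite} is position-dependent through $\rho(\mathbf x_0)=(1+\|\mathbf x_0\|)^{-1}$, and no fixed-$T$ local Hardy space captures this.

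The paper handles this by introducing the partition of unity $\{\psi_m\}$ adapted to the balls $B(\mathbf y_m,\rho(\mathbf y_m))$ (Lemma~\ref{lem:partition_of_unity_invariant}) and proving
\[
\sum_m\|R_j^{\rho(\mathbf y_m)}(f\psi_m)\|_{L^1}\lesssim\|f\|_{L^1}+\|\widetilde R_jf\|_{L^1},
\]
so that each $f\psi_m$ lands in $H^1_{\mathrm{loc},\rho(\mathbf y_m)}$ at its own scale. The crux is controlling the commutator term $\widetilde R_j(f\psi_m)-\psi_m\widetilde R_jf$ summed over $m$; this requires the regularity estimate of Lemma~\ref{lem:psi_m_cancellation} combined with the pointwise bound on $T_{j,\mathbf x}k_t$ from Lemma~\ref{lem:estimation_T_jH_t}. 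Your sketch omits this entire localisation-by-partition and commutator step, without which the forward direction cannot close.
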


\section{Heat  kernels for $\Delta$ and $L $}
In this section we collect basic facts concerning integral kernels associated with the Dunkl Laplacian  and Dunkl harmonic oscillator. Then we prove some estimates for them which will be used latter on.

For fixed $\mathbf{x} \in\mathbb{R}^N$, the \textit{Dunkl kernel} $\mathbf{y} \to E(\mathbf{x},\mathbf{y})$ is the unique solution to the system
\begin{align*}
\begin{cases}
\,T_\xi f=\langle\xi,\mathbf{x} \rangle\,f\quad \text{ for all }\;\xi\in\mathbb{R}^N,\\
\;f(0)=1.
\end{cases}
\end{align*}
It is  known (see, e.g., \cite{Roesler2}, \cite{Rosler} and \cite{Roesler-Voit}) that the semigroups ${H_t}=e^{t\Delta}$ and $K_t=e^{tL }$ have the form
\begin{equation}
{H_t}f(\mathbf{x})=\int_{\mathbb{R}^N}{h_t}(\mathbf{x},\mathbf{y})f(\mathbf{y})\,dw(\mathbf{y}),
\end{equation}
\begin{equation}
K_tf(\mathbf{x})=\int_{\mathbb{R}^N}k_t(\mathbf{x},\mathbf{y})f(\mathbf{y})\,dw(\mathbf{y}),
\end{equation}
where
\begin{equation}\label{eq:Dunkl}
{h_t}(\mathbf{x},\mathbf{y})=c_{k}^{-1}2^{-\frac{\mathbf{N}}{2}}t^{-\frac{\mathbf{N}}{2}}\exp\left(-\frac{\|\mathbf{x}\|^2+\|\mathbf{y}\|^2}{4t}\right)E\left(\frac{\mathbf{x}}{\sqrt{2t}},\frac{\mathbf{y}}{\sqrt{2t}}\right),
\end{equation}
and
\begin{equation}\label{eq:Dunkl_Hermite}
k_t(\mathbf{x},\mathbf{y})=c_{k}^{-1}2^{-\frac{\mathbf{N}}{2}}t_1^{-\frac{\mathbf{N}}{2}}\exp\left(-\frac{\coth(2t)(\|\mathbf{x}\|^2+\|\mathbf{y}\|^2)}{2}\right)E\left(\frac{\mathbf{x}}{\sqrt{2t_1}},\frac{\mathbf{y}}{\sqrt{2t_1}}\right).
\end{equation}
Here and subsequently,
\begin{equation}\label{eq:t1}
 t_1=\frac{1}{2}\sinh(2t),
\end{equation}
\begin{equation}
c_k=\int_{\mathbb{R}^N}\exp\left(-\frac{\|\mathbf{x}\|^2}{2}\right)\,dw(\mathbf{x}).
\end{equation}
It is easily seen that
\begin{equation}\label{eq:connection_Dunkl_Dunkl_Hermite}
k_t(\mathbf{x},\mathbf{y})=\exp\left(\left(\frac{1}{4t_1}-\frac{1}{2}\coth(2t)\right)(\|\mathbf{x}\|^2+\|\mathbf{y}\|^2)\right){h_{t_1}}(\mathbf{x},\mathbf{y}).
\end{equation}
The heat kernel ${h_t}(\mathbf{x},\mathbf{y})$ has the following scaling and symmetry properties which are consequences of~\eqref{eq:Dunkl} and~\eqref{eq:Dunkl_Hermite}:
\begin{equation}\label{eq:symmetry}
{h_t}(\mathbf{x},\mathbf{y})={h_t}(\mathbf{y},\mathbf{x})\text{  for all }\mathbf{x},\mathbf{y} \in \mathbb{R}^N,
\end{equation}
\begin{equation}\label{eq:rescaling}
{h_{\lambda^2 t}}(\lambda \mathbf{x},\lambda \mathbf{y})=|\lambda|^{-\mathbf{N}}{h_t}(\mathbf{x},\mathbf{y})\text{  for all }\mathbf{x},\mathbf{y} \in \mathbb{R}^N, \lambda>0.
\end{equation}
Set
\begin{equation}
\mathcal{G}_t(\mathbf{x},\mathbf{y})=\frac{1}{w(B(\mathbf{x},\sqrt{t}))}\sum_{g \in G}\exp\left(\frac{-\|\mathbf{x}-g(\mathbf{y})\|^2}{t} \right).
\end{equation}
Obviously,
\begin{equation} \label{eq:equivalent_G_t}
\mathcal{G}_t(\mathbf{x},\mathbf{y}) \asymp \frac{1}{w(B(\mathbf{x},\sqrt{t}))}\exp\left(\frac{-\dz(\mathbf{x},\mathbf{y})^2}{t} \right).
\end{equation}
The following theorem was proved in~\cite[Theorem 4.3]{conjugate} and~\cite[Theorem 3.1]{Dziubanski18}.

\begin{theorem}\label{teo:theoremGauss}
\begin{enumerate}[(A)]
\item{
Gaussian type bounds: there are constants $C,c>0$ such that
\begin{align*}
0<{h_t}(\mathbf{x},\mathbf{y}) \leq C \left(\frac{\|\mathbf{x}-\mathbf{y}\|}{\sqrt{t}}+1\right)^{-2}\mathcal{G}_{t/c}(\mathbf{x},\mathbf{y}),
\end{align*}
for every $t>0$ and for every $\mathbf{x},\mathbf{y} \in \mathbb{R}^N$.
}\label{numitem:Gauss}
\item{
Time derivatives:  for any integer $m \geq 0$ there are constants $C,C',c>0$ such that
\begin{align*}
\left|\partial_t^{m} {h_t}(\mathbf{x},\mathbf{y})\right|\leq Ct^{-m} {h_{2t}}(\mathbf{x},\mathbf{y})
\leq C't^{-m} \mathcal{G}_{t/c}(\mathbf{x},\mathbf{y}),
\end{align*}
for every $t>0$ and for every $\mathbf{x},\mathbf{y} \in \mathbb{R}^N$.
}\label{numitem:time}
\item{
Space and time derivatives: for any integer $m \geq 0$, there are constants $C,c>0$ such that
\begin{align*}
\left|T_j \partial_t^{m} {h_t}(\mathbf{x},\mathbf{y})\right|\leq C t^{-m-1\slash 2} \mathcal{G}_{t/c}(\mathbf{x},\mathbf{y}),
\end{align*}
for every $t>0$, $j=1,2,\ldots,N$, and for every $\mathbf{x},\mathbf{y} \in \mathbb{R}^N$.
}\label{numitem:space}
\end{enumerate}
\end{theorem}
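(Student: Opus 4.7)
I would base everything on Rösler's integral representation of the Dunkl kernel, which writes
\[
E\!\Big(\tfrac{\mathbf{x}}{\sqrt{2t}},\tfrac{\mathbf{y}}{\sqrt{2t}}\Big)\exp\!\Big(\!-\tfrac{\|\mathbf{x}\|^{2}+\|\mathbf{y}\|^{2}}{4t}\Big)=\int_{\mathbb{R}^{N}}\exp\!\Big(\!-\tfrac{\|\mathbf{y}-\eta\|^{2}}{4t}\Big)\,d\mu_{\mathbf{x},t}(\eta),
\]
where $\mu_{\mathbf{x},t}$ is a probability measure supported in the convex hull of the orbit $G.\mathbf{x}$. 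Plugged into \eqref{eq:Dunkl} and combined with \eqref{eq:measure_of_ball_formula} to trade the normalization $t^{-\mathbf{N}/2}$ for $w(B(\mathbf{x},\sqrt{t}))^{-1}$, this gives at once the crude bound $h_{t}(\mathbf{x},\mathbf{y})\lesssim \mathcal{G}_{t/c}(\mathbf{x},\mathbf{y})$, upon using $\|\mathbf{y}-\eta\|\geq \min_{g\in G}\|\mathbf{x}-g(\mathbf{y})\|/2$ modulo constants that depend only on $|G|$.

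The sharpening in (A) by the quadratic factor $(\|\mathbf{x}-\mathbf{y}\|/\sqrt{t}+1)^{-2}$ is the main obstacle. When $\|\mathbf{x}-\mathbf{y}\|\leq\sqrt{t}$ the factor is bounded below and the crude bound suffices. In the regime $\|\mathbf{x}-\mathbf{y}\|\gg\sqrt{t}$, one must squeeze two extra powers of $\sqrt{t}/\|\mathbf{x}-\mathbf{y}\|$ out of the integral; the mechanism, which I would adopt following \cite{conjugate} and \cite{Dziubanski18}, is to partition $\operatorname{conv}(G.\mathbf{x})$ into subregions centred on the vertices $g(\mathbf{x})$, and on each piece perform a second-order Taylor expansion in a barycentric coordinate of $\eta$, exploiting that $\mathcal{G}_{t/c}$ already encodes the orbit minimum $\mathrm{d}_{G}(\mathbf{x},\mathbf{y})$ and therefore absorbs polynomial losses in $\|\mathbf{x}-\mathbf{y}\|/\sqrt{t}$ at the cost of shrinking the Gaussian constant $c$.

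For (B) I would use that $e^{t\Delta}$ extends to a holomorphic semigroup on the right half-plane $\{\operatorname{Re}z>0\}$, so Cauchy's formula applied on the circle of radius $t/2$ about $t$ yields
\[
|\partial_{t}^{m}h_{t}(\mathbf{x},\mathbf{y})|\leq C_{m}t^{-m}\sup_{|z-t|\leq t/2}|h_{z}(\mathbf{x},\mathbf{y})|.
\]
The complex-time analogue of \eqref{eq:Dunkl} on this disk differs from its value at real time $2t$ only by bounded multiplicative constants in the Gaussian prefactor, which gives $|\partial_{t}^{m}h_{t}|\lesssim t^{-m}h_{2t}$; the second inequality in (B) is then part (A) applied at time $2t$.

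For (C) I would differentiate \eqref{eq:Dunkl} directly. Because the Gaussian $\exp(-\|\mathbf{x}\|^{2}/(4t))$ is $G$-invariant in $\mathbf{x}$, the cross terms of the Dunkl Leibniz rule vanish, and the eigenfunction identity $T_{j}^{(\mathbf{x})}E(\mathbf{x}/\sqrt{2t},\mathbf{y}/\sqrt{2t})=(y_{j}/(2t))E(\mathbf{x}/\sqrt{2t},\mathbf{y}/\sqrt{2t})$ delivers the clean formula
\[
T_{j}^{(\mathbf{x})}h_{t}(\mathbf{x},\mathbf{y})=\frac{y_{j}-x_{j}}{2t}\,h_{t}(\mathbf{x},\mathbf{y}).
\]
The nuisance factor $|y_{j}-x_{j}|/t$ is then absorbed into $t^{-1/2}\mathcal{G}_{t/c}(\mathbf{x},\mathbf{y})$ by \emph{precisely} the sharpened bound in (A): $(|y_{j}-x_{j}|/\sqrt{t})(\|\mathbf{x}-\mathbf{y}\|/\sqrt{t}+1)^{-2}\leq 1$, at the cost of a slightly smaller Gaussian constant. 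For general $m$, I use that $T_{j}^{(\mathbf{x})}$ and $\partial_{t}$ commute and apply the product rule to the factor $(y_{j}-x_{j})/(2t)$, bounding the resulting sum term by term with (B) and (A) to obtain the claimed $t^{-m-1/2}\mathcal{G}_{t/c}$ estimate.
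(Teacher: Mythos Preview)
The paper does not prove this theorem at all; it is quoted verbatim from \cite[Theorem~4.3]{conjugate} and \cite[Theorem~3.1]{Dziubanski18}. So there is no ``paper's own proof'' to compare against, and your proposal is in effect an attempt to reconstruct those cited arguments.

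Your treatment of (B) and (C) is essentially sound. For (C), the formula $T_{j,\mathbf{x}}h_t(\mathbf{x},\mathbf{y})=\tfrac{y_j-x_j}{2t}\,h_t(\mathbf{x},\mathbf{y})$ is correct (the paper records it as \eqref{eq:formula_Dj}), and the absorption of $|y_j-x_j|/t$ into $t^{-1/2}$ via the quadratic gain from (A) works exactly as you say. For (B), the holomorphic--semigroup route is valid, though you skipped the step of bounding $\sup_{|z-t|\le t/2}|h_z(\mathbf{x},\mathbf{y})|$ by $h_{2t}(\mathbf{x},\mathbf{y})$; a shorter route is to differentiate the R\"osler integral directly and use $(1+u)e^{-u}\le Ce^{-u/2}$ under the integral sign.

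For (A), however, your crude bound has a genuine gap. The inequality you invoke, $\|\mathbf{y}-\eta\|\gtrsim \dz(\mathbf{x},\mathbf{y})$ for $\eta\in\operatorname{conv}(G.\mathbf{x})$, is false: already for $N=1$, $G=\{\pm1\}$, $\mathbf{x}=M$, $\mathbf{y}=0$ one has $\dz(\mathbf{x},\mathbf{y})=M$ while $\eta=0\in[-M,M]$ gives $\|\mathbf{y}-\eta\|=0$. Separately, trading $(2t)^{-\mathbf{N}/2}$ for $w(B(\mathbf{x},\sqrt{t}))^{-1}$ via \eqref{eq:measure_of_ball_formula} costs a factor $\prod_{\alpha\in R^+}\bigl(|\langle\alpha,\mathbf{x}\rangle|/\sqrt{t}+1\bigr)^{2k(\alpha)}$ that your sketch gives no mechanism to absorb. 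The actual arguments in \cite{conjugate,Dziubanski18} handle both issues together: the R\"osler representation carries an additional factor $e^{-(\|\mathbf{x}\|^2-\|\eta\|^2)/(4t)}$ that your displayed identity drops, and it is precisely this decay --- combined with a case analysis near and away from the hyperplanes $\alpha^\perp$ --- that kills the polynomial loss and controls the contribution from $\eta$ deep in the convex hull. Without that ingredient the crude bound, and hence the rest of (A), does not go through.
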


\section{Hardy spaces for the Dunkl Laplacian.}
The Hardy spaces $H^1$ in the Dunkl setting were studied in ~\cite{conjugate}. In the present section we state three equivalent definitions of $H^1$: by means of a nontangential maximal function, by an atomic decomposition, and by Riesz transforms. We shall also give a short proof of characterization by radial maximal function. These definitions  will be used to prove characterizations of local Hardy spaces in the Dunkl setting.
\begin{definition}\normalfont
Fix a normalized  root system $R$ and a multiplicity function $k\geq 0$. We define the \textit{Hardy space} $H^{1}$ associated with $\Delta$   to be
 $$H^1=\{f \in L^{1}(\mathbb{R}^N,dw): \mathcal M^*f\in L^{1}(\mathbb{R}^N,dw)\},$$
 where
\begin{equation}
\mathcal{M}^{*}f(\mathbf{x})=\sup_{t>0,\,\|\mathbf{x}-\mathbf{y}\|<\sqrt{t}}|{H_t}f(\mathbf{x})|
\end{equation}
is the nontangential maximal function for ${H_t}$.
The Hardy space $H^1$ is a Banach space with the norm
\begin{equation}
\|f\|_{H^{1}}=\|\mathcal{M}^{*}f\|_{L^{1}(\mathbb{R}^N,dw)}.
\end{equation}
\end{definition}

\subsection{Characterization of $H^1$ by radial maximal function.}
Our goal in this subsection is to show that the Hardy space $H^1$ is characterized by the following radial maximal function
$$\mathcal{M}_Rf(\mathbf{x})=\sup_{t>0}|{H_{t}}f(\mathbf{x})|$$
with respect to ${H_t}$.

Let
\begin{align*}
\mathcal{M}f(\mathbf{x})=\sup_{\mathbf{x} \in B}\frac{1}{w(B)}\int_{B}|f(\mathbf{y})|\,dw(\mathbf{y}),
\end{align*}
where the supremum is taken over all Euclidean balls $B$ which contain $\mathbf{x}$, be the Hardy-Littlewood maximal function.
We shall need the following tangential maximal function
$$\mathcal{M}^{**}f(\mathbf{x})=\sup_{\mathbf{y} \in \mathbb{R}^N,\,t>0}\left(1+\frac{\dz(\mathbf{x},\mathbf{y})}{\sqrt{t}}\right)^{-2\mathbf{N}}|{H_{t}}f(\mathbf{y})|.$$
Obviously, $\mathcal{M}_{R}f(\mathbf{x}) \leq \mathcal{M}^{*}f(\mathbf{x}) \lesssim \mathcal{M}^{**}f(\mathbf{x})$.

\begin{proposition}
\label{propo:radial_nontangential}
There is a constant $C>0$ such that for any $f \in L^1(\mathbb{R}^N,dw)$ we have
\begin{align*}
\|\mathcal{M}_{R}f\|_{L^1(\mathbb{R}^N,dw)} \leq \|\mathcal{M}^{*}f\|_{L^1(\mathbb{R}^N,dw)} \leq C\|\mathcal{M}_{R}f\|_{L^1(\mathbb{R}^N,dw)} .
\end{align*}
\end{proposition}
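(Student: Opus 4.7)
The first inequality is trivial, since $\mathcal{M}_R f(\mathbf{x})\le \mathcal{M}^{*} f(\mathbf{x})$ pointwise. For the converse, note that $d_G(\mathbf{x},\mathbf{y})\le \|\mathbf{x}-\mathbf{y}\|$ forces $(1+d_G(\mathbf{x},\mathbf{y})/\sqrt{t})^{-2\mathbf{N}}\ge 2^{-2\mathbf{N}}$ whenever $\|\mathbf{x}-\mathbf{y}\|<\sqrt{t}$, so $\mathcal{M}^{*}f(\mathbf{x})\le 2^{2\mathbf{N}}\mathcal{M}^{**}f(\mathbf{x})$ pointwise. It therefore suffices to prove $\|\mathcal{M}^{**}f\|_{L^1(dw)}\lesssim \|\mathcal{M}_R f\|_{L^1(dw)}$.

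The strategy I would follow is the classical Fefferman--Stein one: establish the pointwise inequality
$$\bigl(\mathcal{M}^{**}f(\mathbf{x})\bigr)^r \le C\,\mathcal{M}\bigl((\mathcal{M}_R f)^r\bigr)(\mathbf{x}),\qquad \mathbf{x}\in\mathbb{R}^N,$$
for some $r\in(0,1)$ chosen small enough, where $\mathcal{M}$ is the Hardy--Littlewood maximal function on the doubling space $(\mathbb{R}^N,dw)$. Since $dw$ is doubling and $1/r>1$, $\mathcal{M}$ is bounded on $L^{1/r}(dw)$; raising the pointwise inequality to the power $1/r$ and integrating then yields $\|\mathcal{M}^{**}f\|_{L^1(dw)}\lesssim \|\mathcal{M}_R f\|_{L^1(dw)}$.

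The pointwise bound is proved by fixing $\mathbf{x},\mathbf{y},t$, setting $\rho=d_G(\mathbf{x},\mathbf{y})/\sqrt{t}$, and invoking the semigroup identity $H_t f = H_{t/2}(H_{t/2}f)$ together with Theorem \ref{teo:theoremGauss}(A). Replacing the Gaussian factor in $\mathcal{G}_{t/c}$ by polynomial decay of arbitrary order $M$,
$$h_{t/2}(\mathbf{y},\mathbf{z})\lesssim \frac{C_M}{w(B(\mathbf{y},\sqrt{t}))}\sum_{g\in G}\Bigl(1+\tfrac{\|\mathbf{y}-g(\mathbf{z})\|}{\sqrt{t}}\Bigr)^{-M},$$
I would change variables $\mathbf{z}\mapsto g^{-1}(\mathbf{z})$ in each summand, use $G$-invariance of $dw$ to collapse the sum, and then use the Euclidean triangle inequality with $g_*\in G$ achieving $\|\mathbf{x}-g_*(\mathbf{y})\|=d_G(\mathbf{x},\mathbf{y})$, together with doubling \eqref{eq:measure_of_balls}, to absorb the prefactor $(1+\rho)^{-2\mathbf{N}}$ and convert the bound into a weighted average around $\mathbf{x}$ in the Euclidean metric.

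The main obstacle is extracting the exponent $r<1$: the naive pointwise bound $|H_{t/2}f(\mathbf{z})|\le \mathcal{M}_R f(\mathbf{z})$ only yields $\mathcal{M}(\mathcal{M}_R f)$, which is unbounded on $L^1$. To obtain the $r<1$ refinement one invokes the parabolic analogue of ``$|u|^r$ is subharmonic for $r<1$ and $u$ harmonic'': for caloric $u=H_s f$ with $r<1$, a mean-value inequality of the form
$$|H_t f(\mathbf{y})|^r \lesssim \frac{1}{t\,w(B(\mathbf{y},\sqrt{t}))}\int_{t/2}^{t}\!\int_{B(\mathbf{y},\sqrt{t})}|H_s f(\mathbf{z})|^r\,dw(\mathbf{z})\,ds$$
holds; in our setting it can be derived from the Gaussian bound of Theorem \ref{teo:theoremGauss}(A) and the time-regularity in (B), after careful handling of the orbit-sum in $\mathcal{G}_{t/c}$. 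Substituting this inequality into the convolution estimate produced above, and dominating $|H_s f(\mathbf{z})|^r\le (\mathcal{M}_R f(\mathbf{z}))^r$, completes the pointwise inequality and hence the proposition.
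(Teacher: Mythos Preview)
Your overall Fefferman--Stein strategy matches the paper's, and your reduction to controlling $\mathcal{M}^{**}$ by $\mathcal{M}_R$ via an exponent $r<1$ is exactly right. The point that needs work is how you obtain that exponent. You assert a parabolic mean-value inequality
\[
|H_t f(\mathbf{y})|^r \lesssim \frac{1}{t\,w(B(\mathbf{y},\sqrt{t}))}\int_{t/2}^{t}\!\int_{B(\mathbf{y},\sqrt{t})}|H_s f(\mathbf{z})|^r\,dw(\mathbf{z})\,ds
\]
for some $r<1$ and claim it follows from the Gaussian upper bound and time-regularity. This is the gap. Gaussian bounds give you an $L^1$-average control, $|H_t f(\mathbf{y})|\lesssim \text{(weighted $L^1$ average of }|H_{t/2}f|)$; for $r<1$ Jensen's inequality goes the wrong way, so you cannot pass to an $L^r$-average from that alone. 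The classical $|u|^r$-subharmonicity results rely on specific PDE structure (Hardy--Littlewood, Fefferman--Stein for systems), and it is unclear how to reproduce them for the nonlocal Dunkl Laplacian.

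The paper bypasses this entirely with a self-improvement trick that needs only the Gaussian bound and the a.e.\ finiteness of $\mathcal{M}^{**}f$. After writing $H_t f(\mathbf{y})=H_{t/2}(H_{t/2}f)(\mathbf{y})$ and applying Theorem~\ref{teo:theoremGauss}(A), one splits
\[
|H_{t/2}f(\mathbf{z})|=|H_{t/2}f(\mathbf{z})|^{1/4}\,|H_{t/2}f(\mathbf{z})|^{3/4}
\]
inside the integral, bounds the first factor by $\bigl(1+\dz(\mathbf{x},\mathbf{z})/\sqrt{t}\bigr)^{\mathbf{N}/2}(\mathcal{M}^{**}f(\mathbf{x}))^{1/4}$, and absorbs that polynomial growth into the remaining kernel decay. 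The surviving integral is then dominated, via Lemma~\ref{lem:HardyLittlewood_maximal}, by $\sum_{g\in G}\mathcal{M}\bigl((\mathcal{M}_R f)^{3/4}\bigr)(g(\mathbf{x}))$. This yields
\[
\mathcal{M}^{**}f(\mathbf{x})\lesssim (\mathcal{M}^{**}f(\mathbf{x}))^{1/4}\sum_{g\in G}\mathcal{M}\bigl((\mathcal{M}_R f)^{3/4}\bigr)(g(\mathbf{x})),
\]
and since $\mathcal{M}^{**}f<\infty$ a.e.\ (a separate lemma, proved from the weak $(1,1)$ bound for $\mathcal{M}$), one may divide and conclude with the $L^{4/3}(dw)$-boundedness of $\mathcal{M}$. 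This bootstrapping replaces your mean-value step and is what you should supply.
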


To prove the proposition above, we need some lemmas.

\begin{lemma}
\label{lem:HardyLittlewood_maximal}
Let $\alpha>0$. For any $t>0$ and $\mathbf{x} \in \mathbb{R}^N$ we have
\begin{align*}
\int_{\mathbb{R}^N} \frac{1}{w(B(\mathbf{y},\sqrt{t}))}\left(1+\frac{\dz(\mathbf{x},\mathbf{y})}{\sqrt{t}}\right)^{-\mathbf{N}-\alpha}|f(\mathbf{y})|\,dw(\mathbf{y}) \lesssim \sum_{g \in G}\mathcal{M}f(g(\mathbf{x})).
\end{align*}
\end{lemma}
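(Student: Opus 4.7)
The plan is a standard dyadic annulus decomposition, adapted to the orbit-distance $\dz$. I would split the domain of integration into shells
$$A_k=\{\mathbf{y}\in\mathbb{R}^N:2^{k-1}\sqrt{t}\le \dz(\mathbf{x},\mathbf{y})<2^{k}\sqrt{t}\}\quad(k\ge 1),\qquad A_0=\{\mathbf{y}:\dz(\mathbf{x},\mathbf{y})<\sqrt{t}\},$$
and show that the contribution from $A_k$ is $\lesssim 2^{-k\alpha}\sum_{g\in G}\mathcal Mf(g(\mathbf{x}))$, after which summing the geometric series (using $\alpha>0$) gives the claim.

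On $A_k$ the factor $(1+\dz(\mathbf{x},\mathbf{y})/\sqrt{t})^{-\mathbf{N}-\alpha}$ is bounded by $C\,2^{-k(\mathbf{N}+\alpha)}$. By definition of $\dz$, every $\mathbf{y}\in A_k$ lies in $B(g(\mathbf{x}),2^{k}\sqrt{t})$ for some $g\in G$, so $A_k\subset\bigcup_{g\in G}B(g(\mathbf{x}),2^{k}\sqrt{t})$. For such a $\mathbf{y}$, the inclusion $B(g(\mathbf{x}),2^{k}\sqrt{t})\subset B(\mathbf{y},2^{k+1}\sqrt{t})$ together with the doubling estimate \eqref{eq:measure_of_balls} yields
$$w\bigl(B(g(\mathbf{x}),2^{k}\sqrt{t})\bigr)\le w\bigl(B(\mathbf{y},2^{k+1}\sqrt{t})\bigr)\le C\,2^{(k+1)\mathbf{N}}\,w\bigl(B(\mathbf{y},\sqrt{t})\bigr),$$
which rearranges into the pointwise bound
$$\frac{1}{w(B(\mathbf{y},\sqrt{t}))}\le \frac{C\,2^{k\mathbf{N}}}{w(B(g(\mathbf{x}),2^{k}\sqrt{t}))}\qquad\text{for }\mathbf{y}\in A_k\cap B(g(\mathbf{x}),2^{k}\sqrt{t}).$$

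Combining these ingredients,
\begin{align*}
\int_{A_k}\frac{(1+\dz(\mathbf{x},\mathbf{y})/\sqrt{t})^{-\mathbf{N}-\alpha}}{w(B(\mathbf{y},\sqrt{t}))}|f(\mathbf{y})|\,dw(\mathbf{y})
&\le C\,2^{-k(\mathbf{N}+\alpha)}\sum_{g\in G}\int_{B(g(\mathbf{x}),2^{k}\sqrt{t})}\frac{2^{k\mathbf{N}}|f(\mathbf{y})|}{w(B(g(\mathbf{x}),2^{k}\sqrt{t}))}\,dw(\mathbf{y})\\
&\le C\,2^{-k\alpha}\sum_{g\in G}\mathcal Mf(g(\mathbf{x})),
\end{align*}
since the ball $B(g(\mathbf{x}),2^{k}\sqrt{t})$ is a competitor in the supremum defining $\mathcal Mf(g(\mathbf{x}))$. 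Summing over $k\ge 0$ gives the desired bound.

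No step is particularly delicate; the only point requiring a little care is the passage from $w(B(\mathbf{y},\sqrt{t}))$ to $w(B(g(\mathbf{x}),2^{k}\sqrt{t}))$, which forces one to work with $\mathbf{y}$-centered and $g(\mathbf{x})$-centered balls simultaneously. The doubling property \eqref{eq:measure_of_balls}, valid uniformly in the center, is exactly what handles this, and it is also the mechanism that produces the $2^{k\mathbf{N}}$ factor absorbed into the $k$-series. The factor $\sum_{g\in G}$ on the right-hand side appears naturally because a single shell $A_k$ can meet $|G|$ distinct balls $B(g(\mathbf{x}),2^{k}\sqrt{t})$.
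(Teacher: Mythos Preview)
Your proof is correct and follows essentially the same dyadic annulus decomposition in $\dz(\mathbf{x},\cdot)$ as the paper, with the same use of the doubling estimate \eqref{eq:measure_of_balls} to pass from $w(B(\mathbf{y},\sqrt{t}))$ to $w(B(g(\mathbf{x}),2^{k}\sqrt{t}))$ and the same geometric summation in $2^{-k\alpha}$. The only cosmetic difference is that you are slightly more explicit than the paper about the ball-center swap; otherwise the arguments coincide.
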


\begin{proof}
The proof is standard but we provide this for the sake of completeness. We have
\begin{align*}
&\int_{\mathbb{R}^N} \frac{1}{w(B(\mathbf{y},\sqrt{t}))}\left(1+\frac{\dz(\mathbf{x},\mathbf{y})}{\sqrt{t}}\right)^{-\mathbf{N}-\alpha}|f(\mathbf{y})|\,dw(\mathbf{y})\\=&\int_{\mathcal{O}(B(\mathbf{x},\sqrt{t}))} \frac{1}{w(B(\mathbf{y},\sqrt{t}))}\left(1+\frac{\dz(\mathbf{x},\mathbf{y})}{\sqrt{t}}\right)^{-\mathbf{N}-\alpha}|f(\mathbf{y})|\,dw(\mathbf{y})\\&+\sum_{n=0}^{\infty}\int_{2^n\sqrt{t} \leq \dz(\mathbf{x},\mathbf{y}) < 2^{n+1}\sqrt{t}} \frac{1}{w(B(\mathbf{y},\sqrt{t}))}\left(1+\frac{\dz(\mathbf{x},\mathbf{y})}{\sqrt{t}}\right)^{-\mathbf{N}-\alpha}|f(\mathbf{y})|\,dw(\mathbf{y})\\ \lesssim
&\sum_{g \in G}\mathcal{M}f(g(\mathbf{x}))+\sum_{n=0}^{\infty}2^{-n(\mathbf{N}+\alpha)}\int_{\mathcal{O}(B(\mathbf{x},2^{n+1}\sqrt{t}))} \frac{1}{w(B(\mathbf{y},\sqrt{t}))}|f(\mathbf{y})|\,dw(\mathbf{y})\\\lesssim & \sum_{g \in G}\mathcal{M}f(g(\mathbf{x}))+\sum_{g \in G}\sum_{n=0}^{\infty}2^{-n\alpha}\mathcal{M}f(g(\mathbf{x})),
\end{align*}
where in the inequalities above we have used the doubling property of $w$ (see~\eqref{eq:measure_of_balls}).
\end{proof}

\begin{lemma}
\label{lem:M**_finite_ae}
There exists a constant $C>0$ such that for any $f \in L^1(\mathbb{R}^N,dw)$ we have
\begin{align*}
\mathcal{M}^{**}f(\mathbf{x}) \leq C \sum_{g \in G}\mathcal{M}f(g(\mathbf{x})).
\end{align*}
\end{lemma}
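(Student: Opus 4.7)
The plan is to reduce the pointwise bound to Lemma~3.15 via a quasi-triangle argument for $\dz$. I would proceed in three steps.

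First, I would use the symmetry $h_t(\mathbf y,\mathbf z)=h_t(\mathbf z,\mathbf y)$ from \eqref{eq:symmetry} together with the Gaussian bound in Theorem~\ref{teo:theoremGauss}\eqref{numitem:Gauss} and \eqref{eq:equivalent_G_t}, anchoring the weight at the integration variable $\mathbf z$ rather than at $\mathbf y$:
\begin{equation*}
h_t(\mathbf y,\mathbf z)=h_t(\mathbf z,\mathbf y)\lesssim \mathcal{G}_{t/c}(\mathbf z,\mathbf y)\asymp \frac{1}{w(B(\mathbf z,\sqrt{t}))}\exp\!\left(-c\,\dz(\mathbf y,\mathbf z)^{2}/t\right).
\end{equation*}
This normalization is what will make the final step match Lemma~\ref{lem:HardyLittlewood_maximal} verbatim, with $\mathbf z$ playing the role of $\mathbf y$ in that lemma.

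Second, I would establish the pointwise Peetre-type inequality
\begin{equation*}
\Big(1+\tfrac{\dz(\mathbf x,\mathbf y)}{\sqrt{t}}\Big)^{-2\mathbf N}\exp\!\left(-c\,\dz(\mathbf y,\mathbf z)^{2}/t\right)\lesssim \Big(1+\tfrac{\dz(\mathbf x,\mathbf z)}{\sqrt{t}}\Big)^{-\mathbf N-\alpha}
\end{equation*}
for some fixed $\alpha\in(0,\mathbf N]$. Here $\dz$ is a genuine metric (the triangle inequality follows from $G$-invariance of the Euclidean distance). I would split into two cases. If $\dz(\mathbf x,\mathbf z)\le 2\dz(\mathbf y,\mathbf z)$, then $1+\dz(\mathbf x,\mathbf z)/\sqrt t\le 2(1+\dz(\mathbf y,\mathbf z)/\sqrt t)$, and the polynomial factor in $\dz(\mathbf y,\mathbf z)/\sqrt t$ is absorbed by the Gaussian, while the factor $(1+\dz(\mathbf x,\mathbf y)/\sqrt t)^{-2\mathbf N}$ is simply bounded by $1$. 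If $\dz(\mathbf x,\mathbf z)>2\dz(\mathbf y,\mathbf z)$, then the triangle inequality yields $\dz(\mathbf x,\mathbf y)\ge \dz(\mathbf x,\mathbf z)/2$, so $(1+\dz(\mathbf x,\mathbf y)/\sqrt t)^{-2\mathbf N}\le 2^{2\mathbf N}(1+\dz(\mathbf x,\mathbf z)/\sqrt t)^{-2\mathbf N}\le 2^{2\mathbf N}(1+\dz(\mathbf x,\mathbf z)/\sqrt t)^{-\mathbf N-\alpha}$, and the exponential factor is bounded by $1$.

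Combining the two steps,
\begin{equation*}
\Big(1+\tfrac{\dz(\mathbf x,\mathbf y)}{\sqrt{t}}\Big)^{-2\mathbf N}|{H_t}f(\mathbf y)|\lesssim \int_{\mathbb R^N}\frac{1}{w(B(\mathbf z,\sqrt{t}))}\Big(1+\tfrac{\dz(\mathbf x,\mathbf z)}{\sqrt{t}}\Big)^{-\mathbf N-\alpha}|f(\mathbf z)|\,dw(\mathbf z),
\end{equation*}
and the right-hand side no longer depends on $\mathbf y$ or $t$ once we apply Lemma~\ref{lem:HardyLittlewood_maximal}, which directly bounds it by $\sum_{g\in G}\mathcal Mf(g(\mathbf x))$. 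Taking the supremum over $\mathbf y\in\mathbb R^N$ and $t>0$ finishes the proof.

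The only subtle point is the juggling of the weight $w(B(\cdot,\sqrt t))$: because $w$ is merely doubling, not translation invariant, swapping the center between $\mathbf y$ and $\mathbf z$ would introduce an unwanted polynomial cost in $\|\mathbf y-\mathbf z\|/\sqrt t$ (much larger than $\dz(\mathbf y,\mathbf z)/\sqrt t$). Using the symmetry of $h_t$ at the very start to put the weight at $\mathbf z$ bypasses this difficulty entirely, which is why I would not attempt to express $h_t$ through $w(B(\mathbf y,\sqrt t))^{-1}$.
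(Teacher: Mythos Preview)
Your proof is correct and follows essentially the same route as the paper: anchor the weight at the integration variable $\mathbf z$ via the symmetry of $h_t$, combine the two decay factors through the triangle inequality for $\dz$, and then invoke Lemma~\ref{lem:HardyLittlewood_maximal}. The only cosmetic difference is that the paper first replaces the Gaussian $\exp(-c\,\dz(\mathbf y,\mathbf z)^2/t)$ by the polynomial $(1+\dz(\mathbf y,\mathbf z)/\sqrt t)^{-2\mathbf N}$ and then uses the one-line product bound $(1+a)^{-2\mathbf N}(1+b)^{-2\mathbf N}\lesssim(1+a+b)^{-2\mathbf N}$ (with $\alpha=\mathbf N$ in Lemma~\ref{lem:HardyLittlewood_maximal}), whereas you keep the exponential and do the equivalent two-case split by hand.
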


\begin{proof}
Fix $t>0$ and $\mathbf{x} \in \mathbb{R}^N$. By Theorem~\ref{teo:theoremGauss}~\eqref{numitem:Gauss}, we have
\begin{equation}
\label{eq:ae_calculation}
\begin{split}
&\left(1+\frac{\dz(\mathbf{x},\mathbf{y})}{\sqrt{t}}\right)^{-2\mathbf{N}}|{H_t}f(\mathbf{y})|\\
&\lesssim\left(1+\frac{\dz(\mathbf{x},\mathbf{y})}{\sqrt{t}}\right)^{-2\mathbf{N}}\int\frac{1}{w(B(\mathbf{z},\sqrt{t}))}\left(1+\frac{\dz(\mathbf{z},\mathbf{y})}{\sqrt{t}}\right)^{-2\mathbf{N}}|f(\mathbf{z})|\,dw(\mathbf{z}).
\end{split}
\end{equation}
By an elementary calculation, we see that
\begin{align*}
\left(1+\frac{\dz(\mathbf{x},\mathbf{y})}{\sqrt{t}}\right)^{-2\mathbf{N}}\left(1+\frac{\dz(\mathbf{z},\mathbf{y})}{\sqrt{t}}\right)^{-2\mathbf{N}} \lesssim \left(1+\frac{\dz(\mathbf{x},\mathbf{z})}{\sqrt{t}}\right)^{-2\mathbf{N}}.
\end{align*}
Hence, using Lemma~\ref{lem:HardyLittlewood_maximal} with $\alpha=\mathbf{N}$, we get
\begin{align*}
\left(1+\frac{\dz(\mathbf{x},\mathbf{y})}{\sqrt{t}}\right)^{-2\mathbf{N}}|{H_t}f(\mathbf{y})|\lesssim \sum_{g \in G}\mathcal{M}f(g(\mathbf{x})).
\end{align*}
\end{proof}

\begin{corollary}
\label{coro:finite_ae}
Since $\mathcal{M}$ is of weak type $(1,1)$, $\mathcal{M}^{**}f$ is finite almost everywhere for any $f \in L^1(\mathbb{R}^N,dw)$.
\end{corollary}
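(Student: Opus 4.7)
The statement is essentially an immediate consequence of the preceding Lemma~\ref{lem:M**_finite_ae}, so my plan is short. First I would invoke the lemma to obtain the pointwise bound
\[
\mathcal{M}^{**}f(\mathbf{x}) \;\leq\; C\sum_{g\in G}\mathcal{M}f(g(\mathbf{x})),
\]
valid for every $\mathbf{x}\in\mathbb{R}^N$ and every $f\in L^1(\mathbb{R}^N,dw)$. Thus it suffices to show that each summand on the right is finite almost everywhere.

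Next I would use two standard facts: (i) the Hardy--Littlewood maximal operator $\mathcal{M}$ is of weak type $(1,1)$ with respect to the doubling measure $dw$ (this is the hypothesis cited in the corollary, and follows from \eqref{eq:measure_of_balls}), so in particular $\mathcal{M}f(\mathbf{x})<\infty$ for $dw$-a.e.\ $\mathbf{x}$; (ii) the measure $w$ is $G$-invariant, because the weight $w(\mathbf{x})=\prod_{\alpha\in R}|\langle\alpha,\mathbf{x}\rangle|^{k(\alpha)}$ is invariant under every reflection $\sigma_\alpha$ and hence under the whole Weyl group $G$. From (ii) the function $\mathbf{x}\mapsto\mathcal{M}f(g(\mathbf{x}))$ is (up to a $w$-null set) a rearrangement of $\mathcal{M}f$, so it is finite $dw$-a.e. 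Since $G$ is finite, the finite sum $\sum_{g\in G}\mathcal{M}f(g(\mathbf{x}))$ is finite $dw$-a.e., which combined with the pointwise bound above yields $\mathcal{M}^{**}f(\mathbf{x})<\infty$ almost everywhere.

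There is no real obstacle here; the only thing to verify carefully is the $G$-invariance argument, which guarantees that the composition $\mathcal{M}f\circ g$ remains in weak $L^1(dw)$ with the same quasi-norm as $\mathcal{M}f$. This is immediate from the change of variables $\mathbf{y}\mapsto g^{-1}(\mathbf{y})$ under $dw$, which preserves both the measure and Euclidean balls (since $g$ is an orthogonal transformation).
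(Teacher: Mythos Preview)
Your proposal is correct and follows the same route the paper takes: the corollary is stated without a separate proof, since it is an immediate consequence of Lemma~\ref{lem:M**_finite_ae} together with the weak type $(1,1)$ of $\mathcal{M}$. Your added remark on the $G$-invariance of $dw$ is the right justification for why each term $\mathcal{M}f(g(\cdot))$ is finite $dw$-a.e.
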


\begin{lemma}
\label{lem:nontangential_radial}
There exists a constant $C>0$ such that for every $f \in L^1(\mathbb{R}^N,dw)$ we have
\begin{align*}
\|\mathcal{M}^{**}f\|_{L^1(\mathbb{R}^N,dw)} \leq C \|\mathcal{M}_{R}f\|_{L^1(\mathbb{R}^N,dw)}.
\end{align*}
\end{lemma}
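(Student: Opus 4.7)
The strategy is the Fefferman--Stein ``subharmonic'' trick. I aim to prove the pointwise bound
\begin{equation*}
\mathcal{M}^{**}f(\mathbf{x})^{q}\;\lesssim\;\sum_{g\in G}\mathcal{M}((\mathcal{M}_{R}f)^{q})(g(\mathbf{x}))
\end{equation*}
for some fixed $q\in(1/2,1)$. Granting this, taking $(1/q)$-th roots, using $(\sum_{g}a_{g})^{1/q}\lesssim \sum_{g}a_{g}^{1/q}$, integrating against $dw$, exploiting the $G$-invariance of $w$ to reduce each summand to the identity orbit, and invoking the $L^{1/q}(dw)$-boundedness of $\mathcal{M}$ (valid since $1/q>1$) yields
\[
\|\mathcal{M}^{**}f\|_{L^{1}(dw)}\lesssim \bigl\|\mathcal{M}((\mathcal{M}_{R}f)^{q})\bigr\|_{L^{1/q}(dw)}^{1/q}\lesssim \|\mathcal{M}_{R}f\|_{L^{1}(dw)},
\]
which is the claim.

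To establish the pointwise bound, fix $\mathbf{x}$ with $\mathcal{M}^{**}f(\mathbf{x})<\infty$ (valid a.e.\ by Corollary~\ref{coro:finite_ae}), and choose $(\mathbf{y}_{0},t_{0})$ so that $(1+\dz(\mathbf{x},\mathbf{y}_{0})/\sqrt{t_{0}})^{-2\mathbf{N}}|H_{t_{0}}f(\mathbf{y}_{0})|\geq\tfrac{1}{2}\mathcal{M}^{**}f(\mathbf{x})$. The crux is a continuity estimate: there exists a universal $\varepsilon\in(0,1)$, depending only on $R$ and $k$, such that $|H_{t_{0}}f(\mathbf{y}_{0})-H_{t_{0}}f(\mathbf{z})|\leq\tfrac{1}{2}|H_{t_{0}}f(\mathbf{y}_{0})|$ for all $\mathbf{z}\in B(\mathbf{y}_{0},\varepsilon\sqrt{t_{0}})$. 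Granting this, $\mathcal{M}_{R}f(\mathbf{z})\geq|H_{t_{0}}f(\mathbf{z})|\geq\tfrac{1}{2}|H_{t_{0}}f(\mathbf{y}_{0})|$ throughout $B(\mathbf{y}_{0},\varepsilon\sqrt{t_{0}})$; combining with the triangle inequality $\dz(\mathbf{x},\mathbf{z})\leq\dz(\mathbf{x},\mathbf{y}_{0})+\varepsilon\sqrt{t_{0}}$ yields $\mathcal{M}_{R}f(\mathbf{z})(1+\dz(\mathbf{x},\mathbf{z})/\sqrt{t_{0}})^{-2\mathbf{N}}\gtrsim\mathcal{M}^{**}f(\mathbf{x})$ on $B(\mathbf{y}_{0},\varepsilon\sqrt{t_{0}})$. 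Raising to the $q$-th power, averaging over this ball, using the doubling property of $w$ to replace $w(B(\mathbf{y}_{0},\varepsilon\sqrt{t_{0}}))^{-1}$ by $w(B(\mathbf{z},\sqrt{t_{0}}))^{-1}$ (comparable for $\mathbf{z}\in B(\mathbf{y}_{0},\varepsilon\sqrt{t_{0}})$), extending the integration to $\mathbb{R}^{N}$, and applying Lemma~\ref{lem:HardyLittlewood_maximal} with $\alpha=2\mathbf{N}q-\mathbf{N}>0$ to $(\mathcal{M}_{R}f)^{q}$ delivers the pointwise bound.

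The continuity estimate is proved via the semigroup factorization $H_{t_{0}}f=H_{t_{0}/2}(H_{t_{0}/2}f)$ and the mean-value theorem applied to $\mathbf{y}\mapsto h_{t_{0}/2}(\mathbf{y},\mathbf{w})$: combined with a Euclidean gradient bound $|\nabla_{\mathbf{y}}h_{t/2}(\mathbf{y},\mathbf{w})|\lesssim t^{-1/2}\mathcal{G}_{t/c}(\mathbf{y},\mathbf{w})$, it gives $|H_{t_{0}}f(\mathbf{y}_{0})-H_{t_{0}}f(\mathbf{z})|\lesssim(\|\mathbf{y}_{0}-\mathbf{z}\|/\sqrt{t_{0}})\int\mathcal{G}_{t_{0}/c}(\mathbf{y}_{0},\mathbf{w})|H_{t_{0}/2}f(\mathbf{w})|\,dw(\mathbf{w})$. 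The integral on the right is estimated by using $|H_{t_{0}/2}f(\mathbf{w})|\leq(1+\dz(\mathbf{x},\mathbf{w})/\sqrt{t_{0}/2})^{2\mathbf{N}}\mathcal{M}^{**}f(\mathbf{x})$ (from the definition of $\mathcal{M}^{**}$), the triangle inequality $\dz(\mathbf{x},\mathbf{w})\leq\dz(\mathbf{x},\mathbf{y}_{0})+\dz(\mathbf{y}_{0},\mathbf{w})$, and the fact that the Gaussian $\mathcal{G}_{t_{0}/c}(\mathbf{y}_{0},\cdot)$ beats polynomial growth; the result is $\lesssim\mathcal{M}^{**}f(\mathbf{x})(1+\dz(\mathbf{x},\mathbf{y}_{0})/\sqrt{t_{0}})^{2\mathbf{N}}\lesssim|H_{t_{0}}f(\mathbf{y}_{0})|$ by the choice of $(\mathbf{y}_{0},t_{0})$, so the continuity estimate follows for $\varepsilon$ small enough. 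The principal technical obstacle is the Euclidean gradient bound itself: decomposing $\partial_{y_{j}}=T_{j}^{(\mathbf{y})}-\sum_{\alpha}\tfrac{k(\alpha)}{2}\langle\alpha,e_{j}\rangle(1-\sigma_{\alpha}^{(\mathbf{y})})/\langle\alpha,\mathbf{y}\rangle$, the $T_{j}$-part is controlled by Theorem~\ref{teo:theoremGauss}(C), but the non-local reflection corrections $(h_{t/2}(\mathbf{y},\mathbf{w})-h_{t/2}(\sigma_{\alpha}\mathbf{y},\mathbf{w}))/\langle\alpha,\mathbf{y}\rangle$ require case analysis: when $|\langle\alpha,\mathbf{y}\rangle|\geq\sqrt{t}$ the Gaussian bound Theorem~\ref{teo:theoremGauss}(A) suffices directly, whereas when $|\langle\alpha,\mathbf{y}\rangle|<\sqrt{t}$ one must exploit the first-order vanishing of the numerator on the wall $\alpha^{\perp}$, which follows from joint smoothness of the Dunkl kernel.
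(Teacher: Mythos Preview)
Your overall strategy is the classical Fefferman--Stein route and is sound in outline, but it differs from the paper's argument and is considerably more work. The paper also factorizes $H_{t}=H_{t/2}H_{t/2}$, but then proceeds directly: applying the Gaussian upper bound (Theorem~\ref{teo:theoremGauss}\eqref{numitem:Gauss}) to $h_{t/2}(\mathbf{y},\cdot)$ and the triangle inequality for $\dz$ gives
\[
\Bigl(1+\tfrac{\dz(\mathbf{x},\mathbf{y})}{\sqrt{t}}\Bigr)^{-2\mathbf{N}}|H_{t}f(\mathbf{y})|\;\lesssim\;\int\frac{1}{w(B(\mathbf{z},\sqrt{t}))}\Bigl(1+\tfrac{\dz(\mathbf{x},\mathbf{z})}{\sqrt{t}}\Bigr)^{-2\mathbf{N}}|H_{t/2}f(\mathbf{z})|\,dw(\mathbf{z}).
\]
One then writes $|H_{t/2}f(\mathbf{z})|=|H_{t/2}f(\mathbf{z})|^{1/4}\cdot|H_{t/2}f(\mathbf{z})|^{3/4}$, bounds the first factor by $(\mathcal{M}^{**}f(\mathbf{x}))^{1/4}(1+\dz(\mathbf{x},\mathbf{z})/\sqrt{t})^{\mathbf{N}/2}$ straight from the definition of $\mathcal{M}^{**}$, bounds the second by $(\mathcal{M}_{R}f(\mathbf{z}))^{3/4}$, and closes with Lemma~\ref{lem:HardyLittlewood_maximal}. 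This uses only the Gaussian \emph{upper bound} on $h_{t}$: no near-extremizer $(\mathbf{y}_{0},t_{0})$, no continuity estimate, no gradient bound.

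Your argument, by contrast, hinges on the Euclidean gradient bound $|\nabla_{\mathbf{y}}h_{t}(\mathbf{y},\mathbf{w})|\lesssim t^{-1/2}\mathcal{G}_{t/c}(\mathbf{y},\mathbf{w})$, which is \emph{not} supplied by Theorem~\ref{teo:theoremGauss} (only the Dunkl derivatives $T_{j}$ are controlled there). Your sketched treatment of the reflection terms in the regime $|\langle\alpha,\mathbf{y}\rangle|<\sqrt{t}$ via ``joint smoothness of the Dunkl kernel'' is not a proof: smoothness yields finiteness of derivatives, not the required \emph{uniform Gaussian-weighted} bound, and invoking the mean-value theorem along the segment $[\mathbf{y},\sigma_{\alpha}\mathbf{y}]$ is circular since it presupposes the very gradient estimate in question. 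A rigorous argument here would need either an external Lipschitz/H\"older estimate for $h_{t}$ in the Euclidean variable (such estimates exist in the Dunkl literature but are themselves nontrivial) or a careful simultaneous bootstrap over all roots. The paper's interpolation trick sidesteps this detour entirely.
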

\begin{proof}
The proof uses standard arguments (cf.~\cite{Grafakos}). Thanks to the semigroup property is simple  and short.  For the convenience of the reader we provide details.  Fix $t>0$ and $\mathbf{x} \in \mathbb{R}^N$. Similarly as in the proof of Lemma~\ref{lem:M**_finite_ae}, we have
\begin{align*}
&\left(1+\frac{\dz(\mathbf{x},\mathbf{y})}{\sqrt{t}}\right)^{-2\mathbf{N}}|{H_t}f(\mathbf{y})|=\left(1+\frac{\dz(\mathbf{x},\mathbf{y})}{\sqrt{t}}\right)^{-2\mathbf{N}}|{H_{t/2}}{H_{t/2}}f(\mathbf{y})|\\&\lesssim\int\frac{1}{w(B(\mathbf{z},\sqrt{t}))}\left(1+\frac{\dz(\mathbf{x},\mathbf{z})}{\sqrt{t}}\right)^{-2\mathbf{N}}|{H_{t/2}}f(\mathbf{z})|\,dw(\mathbf{z})\\&\lesssim \left(\mathcal{M}^{**}f(\mathbf{x})\right)^{1/4}\int\frac{1}{w(B(\mathbf{z},\sqrt{t}))}\left(1+\frac{\dz(\mathbf{x},\mathbf{z})}{\sqrt{t}}\right)^{-3\mathbf{N}/2}|{H_{t/2}}f(\mathbf{z})|^{3/4}\,dw(\mathbf{z})\\&\lesssim \left(\mathcal{M}^{**}f(\mathbf{x})\right)^{1/4} \sum_{g \in G}\mathcal{M}((\mathcal{M}_{R}f(\cdot))^{3/4})(g(\mathbf{x})),
\end{align*}
where in the last inequality we have used Lemma~\ref{lem:HardyLittlewood_maximal} with $\alpha=\mathbf{N}/2$. In summary, we have obtained
\begin{align*}
\mathcal{M}^{**}f(\mathbf{x}) \lesssim \left(\mathcal{M}^{**}f(\mathbf{x})\right)^{1/4} \sum_{g \in G}\mathcal{M}((\mathcal{M}_{R}f(\cdot))^{3/4})(g(\mathbf{x})).
\end{align*}
Since $\mathcal{M}^{**}f$ is finite almost everywhere (see Corollary~\ref{coro:finite_ae}), we have
\begin{align*}
\mathcal{M}^{**}f(\mathbf{x}) \lesssim \sum_{g \in G}\left(\mathcal{M}((\mathcal{M}_{R}f(\cdot))^{3/4})(g(\mathbf{x}))\right)^{4/3}
\end{align*}
for a.e.  $\mathbf{x} \in \mathbb{R}^N$. Since $\mathcal{M}$ is bounded on $L^{4/3}(\mathbb{R}^N,dw)$, the lemma follows.
\end{proof}
Proposition~\ref{propo:radial_nontangential} is an easy consequence of Lemma~\ref{lem:nontangential_radial}.

\subsection{Atomic decomposition of $H^1$.}

\begin{definition}\normalfont
A function {$a(\mathbf{x})$} is said to be  an  {atom} for $H^{1,{{\rm{at}}}}$  if
{there exist} a ball {$B$} such that
\begin{itemize}
\item
$\supp a \subset B$\,,
\vspace{.5mm}
\item
$\|a\|_{L^{\infty}} \leq w(B)^{-1}$
\item
$\int_{\mathbb{R}^N}a(\mathbf{x})\,dw(\mathbf{x})=0$.
\end{itemize}
A function $f$ belongs to ${H^{1,{{\rm{at}}}}} $ if there are $c_j\in\mathbb{ C}$ and atoms $a_j$ for ${H^{1,{{\rm{at}}}}}$ such that $\sum_{j=1}^{\infty}|c_j|<\infty$,
\begin{equation}\label{eq:atomic_representation}
f=\sum_{j=1}^{\infty}c_j\,a_j\,.
\end{equation}
{In this case,} set
$
\|f\|_{H^{1,{{\rm{at}}}}}=\inf\,\Bigl\{\,\sum_{j=1}^{\infty}|c_j|\,\Bigr\}\,,
$
where the infimum is taken over all representations {\eqref{eq:atomic_representation}}.
\end{definition}

The following theorem was proved in~\cite[Theorem 1.6]{Dziubanski18}.

\begin{theorem}\label{teo:maximal_atom}
The spaces $H^{1,{{\rm{at}}}}$ and $H^1$ coincide and their norms are equivalent, i.e. there exists a constant $C>0$ such that
\begin{equation}
C^{-1}\|f\|_{H^{1,{{\rm{at}}}}} \leq \|f\|_{H^{1}} \leq C\|f\|_{H^{1,{{\rm{at}}}}}.
\end{equation}
\end{theorem}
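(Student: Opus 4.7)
The plan is to prove the two inclusions separately. For $H^{1,{\rm at}} \subset H^1$, I would show that every atom $a$ satisfies $\|\mathcal{M}^{*}a\|_{L^{1}(dw)} \leq C$ with $C$ independent of the atom; by subadditivity of $\mathcal{M}^{*}$ this yields the global bound. Fix an atom supported in $B = B(\mathbf{x}_{0}, r)$. Split the $L^1$-norm into contributions over $\mathcal{O}(B(\mathbf{x}_{0}, 4r))$ and its complement. On the inner piece, use Cauchy--Schwarz together with the $L^2$-boundedness of $\mathcal{M}^{*}$ (which follows from Lemma~\ref{lem:M**_finite_ae} applied with $L^2$-boundedness of the Hardy--Littlewood maximal operator) and the size bound $\|a\|_{L^2(dw)}^{2}\leq w(B)^{-1}$.

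On the outer piece, exploit the cancellation $\int a\,dw=0$ to write
\begin{equation*}
H_{t}a(\mathbf{y}) = \int_{B}\bigl[h_{t}(\mathbf{y},\mathbf{z})-h_{t}(\mathbf{y},\mathbf{x}_{0})\bigr]\,a(\mathbf{z})\,dw(\mathbf{z}).
\end{equation*}
A mean-value argument in $\mathbf{z}$ combined with the gradient estimate of Theorem~\ref{teo:theoremGauss}(C) (after controlling the difference between $\partial_{z_{j}}$ and $T_{z_{j}}$, which for $\mathbf{y}$ lying outside $\mathcal{O}(4B)$ is harmless since all reflected copies of $\mathbf{x}_{0}$ stay far from $\mathbf{y}$) gives a factor $r/\sqrt{t}$ at the cost of replacing $h_{t}$ by $\mathcal{G}_{t/c}$. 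Taking the nontangential supremum and integrating in $\mathbf{y}$ outside $\mathcal{O}(4B)$, one splits the $t$-integral at $t\sim r^{2}$ and uses the Gaussian decay in $\dz(\mathbf{y},\mathbf{x}_{0})$ together with the doubling estimate~\eqref{eq:measure_of_balls} to obtain a uniform bound.

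For $H^1 \subset H^{1,{\rm at}}$, I would rely on the equivalence $\mathcal{M}^{*}f \sim \mathcal{M}_{R}f$ from Proposition~\ref{propo:radial_nontangential} and perform a Calderón--Zygmund-type decomposition of the level sets $\Omega_{k}=\{\mathcal{M}_{R}f>2^{k}\}$. After a Whitney covering of $\Omega_{k}$ by Euclidean balls $B_{j}^{k}$ and a subordinate partition of unity $\varphi_{j}^{k}$, the key is to exploit a semigroup reproducing formula such as
\begin{equation*}
f \;=\; c\int_{0}^{\infty}t^{2}\Delta^{2}H_{t}f\,\frac{dt}{t}
\end{equation*}
and build atoms by localizing $t^{2}\Delta^{2}H_{t}f$ against $\varphi_{j}^{k}$ over appropriate $t$-windows. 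Symmetry of $\Delta$ with respect to $dw$, together with $\Delta 1=0$, gives the cancellation $\int a\,dw = 0$ automatically after integration by parts, while the size estimates follow from Theorem~\ref{teo:theoremGauss}(B) and (C) combined with the pointwise control of $|t^{2}\Delta^{2}H_{t}f(\mathbf{z})|$ by a tangential maximal function of $f$.

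The main obstacle will be this second direction, where the non-standard Gaussian bounds (decay in the orbit distance $\dz$ rather than in the Euclidean distance) force careful bookkeeping of how Whitney balls interact through reflections. One must argue that a single Euclidean Whitney ball is ``close'' in the $\dz$-metric to at most $|G|$ other Whitney balls, so that when the atomic contributions are summed the overlap constants remain controlled. The bound $w(\mathcal{O}(B))\leq |G|w(B)$ together with the doubling property are precisely the tools needed for this, but weaving them through the Calderón--Zygmund machinery while keeping track of the $\dz$-support of $H_{t}$ of a localized function is the technical heart of the argument.
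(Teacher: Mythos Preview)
The paper does not prove Theorem~\ref{teo:maximal_atom} at all: it is quoted verbatim from \cite[Theorem~1.6]{Dziubanski18}. So there is no ``paper's own proof'' to compare against, only a citation.

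As for your sketch, the overall architecture is the standard one and is essentially what is carried out in \cite{Dziubanski18} and \cite{conjugate}. Two points deserve more care, however. In the direction $H^{1,{\rm at}}\subset H^1$, your cancellation estimate on the outer region hinges on a \emph{Euclidean} Lipschitz bound
\[
|h_t(\mathbf{y},\mathbf{z})-h_t(\mathbf{y},\mathbf{x}_0)|\lesssim \frac{\|\mathbf{z}-\mathbf{x}_0\|}{\sqrt{t}}\,\mathcal{G}_{t/c}(\mathbf{y},\mathbf{x}_0),
\]
but Theorem~\ref{teo:theoremGauss}\eqref{numitem:space} controls only the Dunkl derivative $T_{j}h_t$, not $\partial_{j}h_t$. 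Your parenthetical remark that the difference $T_j-\partial_j$ is ``harmless'' because $\mathbf{y}\notin\mathcal{O}(4B)$ is not quite right: the difference operator acts on the $\mathbf{z}$-variable, so what matters is the size of $\langle\alpha,\mathbf{z}\rangle$ along the segment $[\mathbf{x}_0,\mathbf{z}]$, and this may be small if the ball $B$ meets a mirror. One needs a genuine H\"older-in-$\mathbf{z}$ estimate for $h_t$ (such an estimate is proved in \cite{conjugate} and in \cite{Dziubanski18}, but it is not a one-line consequence of Theorem~\ref{teo:theoremGauss}).

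For the converse $H^1\subset H^{1,{\rm at}}$, your plan via level sets, Whitney balls and a reproducing formula is along the right lines, and your identification of the obstacle---tracking how Euclidean Whitney balls interact through the $G$-orbit---is accurate. Be aware, though, that the argument in \cite{Dziubanski18} does not proceed via the square-function/tent-space machinery you allude to; it goes through a Calder\'on--Zygmund decomposition adapted directly to the radial maximal function and the Gaussian upper bound in Theorem~\ref{teo:theoremGauss}\eqref{numitem:Gauss}, which already encodes the orbit structure. Your outline would work in principle, but filling it in is closer to reproving the main result of \cite{Dziubanski18} than to a short lemma.
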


\subsection{Riesz transform characterization of $H^1$}

\begin{definition}\normalfont
Let $T>0$. For $j=1,2,\ldots,N$ we define the \textit{Riesz transform} $R_jf$ of $f$
associated with the Dunkl Laplacian $\Delta$ by
\begin{equation}
R_{j}f(\mathbf{x})=d_k\int_{0}^{\infty}T_j {H_{t}}f(\mathbf{x})\,\frac{dt}{\sqrt{t}},
\end{equation}
where $d_k=2^{\mathbf{N}/2}\Gamma((\mathbf{N}+1)/2)\pi^{-1/2}$.
\end{definition}
It is well known that $R_j$ are bounded operators on $L^p(dw)$ for $1<p<\infty$ (see~\cite{AS}). The action of $R_j$ on $L^1(\mathbb{R}^N,dw)$ functions, thanks to Theorem~\ref{teo:theoremGauss}~\eqref{numitem:space}, is defined in a weak sense (see~\cite[Section 8]{conjugate}).
The following theorem was proved in~\cite[Theorem 2.11]{conjugate}.
\begin{theorem}\label{teo:Riesz}
Let $f \in L^1(\mathbb{R}^N,dw)$. Then $f \in H^1$ if and only if $R_jf \in L^1(\mathbb{R}^N,dw)$ for $j=1,2,\ldots,N$. Moreover,
\begin{equation}
\|f\|_{H^1} \lesssim \|f\|_{L^1(\mathbb{R}^N,dw)}+\sum_{j=1}^{N}\|R_jf\|_{L^1(\mathbb{R}^N,dw)} \lesssim \|f\|_{H^1}.
\end{equation}
\end{theorem}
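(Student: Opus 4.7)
The plan is to establish the two inequalities of Theorem~\ref{teo:Riesz} separately. The right-hand bound (boundedness of each $R_j$ from $H^1$ into $L^1(dw)$) is the easier direction and follows from the atomic characterization in Theorem~\ref{teo:maximal_atom} combined with the $L^2$-theory and the kernel estimates of Theorem~\ref{teo:theoremGauss}. The left-hand bound (recovering $f \in H^1$ from the $L^1(dw)$-integrability of all $R_j f$) is the substantive part; I would attack it through a system of conjugate harmonic functions adapted to the Dunkl setting.

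For the $H^1 \to L^1$ direction, by Theorem~\ref{teo:maximal_atom} it suffices to prove $\|R_j a\|_{L^1(dw)} \lesssim 1$ uniformly over all $H^{1,{{\rm{at}}}}$-atoms $a$ supported in a ball $B = B(\mathbf{x}_0,r)$. I would split
\begin{equation*}
\int_{\mathbb{R}^N}|R_j a|\,dw = \int_{\mathcal{O}(2B)}|R_j a|\,dw + \int_{\mathcal{O}(2B)^c}|R_j a|\,dw.
\end{equation*}
The local piece is controlled by Cauchy--Schwarz, the doubling of $w$, and the $L^2$-boundedness of $R_j$ (cf.~\cite{AS}), yielding a bound $\lesssim w(B)^{1/2}\|a\|_{L^2(dw)} \lesssim 1$. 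For the far-away piece I would use the cancellation $\int a\,dw = 0$ to write
\begin{equation*}
R_j a(\mathbf{x}) = \int \bigl(\mathcal{R}_j(\mathbf{x},\mathbf{y}) - \mathcal{R}_j(\mathbf{x},\mathbf{x}_0)\bigr)a(\mathbf{y})\,dw(\mathbf{y}), \qquad \mathcal{R}_j(\mathbf{x},\mathbf{y}) = d_k\int_0^\infty T_{j,\mathbf{x}} {h_t}(\mathbf{x},\mathbf{y})\,\frac{dt}{\sqrt{t}},
\end{equation*}
and extract regularity of $\mathcal{R}_j(\mathbf{x},\cdot)$ for $\mathbf{x}$ off $\mathcal{O}(2B)$ by splitting the $t$-integral at $t=r^2$: for $t \le r^2$ the short-time contribution is handled by the time- and space-derivative estimates of Theorem~\ref{teo:theoremGauss}\eqref{numitem:time}--\eqref{numitem:space} combined with a fundamental-theorem-of-calculus interpolation in $\mathbf{y}$ between $\mathbf{x}_0$ and $\mathbf{y}$, while for $t \ge r^2$ the tail integrates using the Gaussian decay of $\mathcal{G}_{t/c}(\mathbf{x},\cdot)$.

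For the converse direction, by Proposition~\ref{propo:radial_nontangential} it is enough to control the radial maximal function $\mathcal{M}_R f$. I would introduce the Poisson extensions $u_0(t,\mathbf{x}) = P_t f(\mathbf{x})$ and $u_j(t,\mathbf{x}) = P_t(R_j f)(\mathbf{x})$ for $j=1,\ldots,N$, where $P_t$ is obtained from $H_s$ by subordination. Since $T_j$ commutes with $\Delta$ and with $(-\Delta)^{-1/2}$, the vector $(u_0,u_1,\ldots,u_N)$ satisfies the Dunkl analogue of the generalized Cauchy--Riemann system
\begin{equation*}
\partial_t u_j + T_j u_0 = 0, \qquad \partial_t u_0 = \sum_{i=1}^{N} T_i u_i, \qquad T_i u_j = T_j u_i.
\end{equation*}
A direct computation using these equations and the skew-symmetry of $T_j$ against $dw$ shows that $|u|^p = (u_0^2+\sum_j u_j^2)^{p/2}$ is subharmonic for $\partial_t^2 + \Delta$ for some $p<1$ (the natural threshold being $(\mathbf{N}-1)/\mathbf{N}$, where $\mathbf{N}$ is the homogeneous dimension). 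The mean-value inequality then produces a pointwise estimate of the form $\mathcal{M}_R f(\mathbf{x}) \lesssim \sum_{g \in G}\bigl(\mathcal{M}\bigl(|f|^p+\sum_j|R_jf|^p\bigr)(g(\mathbf{x}))\bigr)^{1/p}$, and the boundedness of the Hardy--Littlewood maximal operator $\mathcal{M}$ on $L^{1/p}(dw)$, with $1/p>1$, closes the argument.

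The main obstacle is the derivation of the Cauchy--Riemann system and, especially, the pointwise subharmonicity of $|u|^p$ in the Dunkl setting. The reflection part of $T_j$ produces nonlocal cross terms involving $u_j\circ\sigma_\alpha$, so the classical divergence computation of Stein--Weiss has to be reworked carefully; the homogeneous dimension $\mathbf{N}$ (rather than $N$) enters the critical exponent. Once this subharmonicity is in hand, subordination, the mean-value inequality, and the Hardy--Littlewood maximal function estimate on orbits proceed along essentially classical lines.
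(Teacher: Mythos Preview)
The paper does not prove this theorem itself; it is quoted from~\cite[Theorem~2.11]{conjugate}. Your outline is precisely the strategy of that reference: atomic decomposition plus Calder\'on--Zygmund kernel regularity for the bound $\|R_j a\|_{L^1(dw)}\lesssim 1$, and the Stein--Weiss route via conjugate $(\partial_t^2+\Delta)$-harmonic functions and subharmonicity of $|u|^p$ for the converse---and you are right that the subharmonicity step, with its reflection cross-terms and the threshold exponent tied to the homogeneous dimension $\mathbf{N}$, is the substantive core of~\cite{conjugate}.
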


\section{Local Hardy spaces for the Dunkl Laplacian.}\label{sec:localHardy}
In this section we introduce family of local Hardy spaces  ${H^{1}_{{\rm{loc}},T}}$ associated with the Dunkl Laplacian. Our starting definition is that by means of a local maximal function. Then we characterize  ${H^{1}_{{\rm{loc}},T}}$ by relevant local Riesz transforms and atomic decompositions.

\subsection{Definition of local Hardy space  by the maximal function. Relation with $H^1$.}

\begin{definition}\normalfont
Let $T>0$ and $f \in L^{1}(\mathbb{R}^N,dw)$. We say that $f$ belongs to the \textit{local Hardy space} $H_{\rm{loc},T}^{1}$ associated with the Dunkl Laplacian if and only  if
\begin{equation}
f_{{\rm{loc}},T}^{*}(\mathbf{x})=\sup_{0<t \leq T^2}|{H_t}f(\mathbf{x})|
\end{equation}
belongs to $L^1(\mathbb{R}^N,dw)$. The norm in the space is given by
\begin{equation}
\|f\|_{H^{1}_{{\rm{loc}},T}}=\|f^{*}_{{\rm{loc}},T}\|_{L^{1}(\mathbb{R}^N,dw)}.
\end{equation}
\end{definition}

The remaining part of this subsection is devoted to relations of  ${H^{1}_{{\rm{loc}},T}}$ with $H^1$. This is stated in Proposition \ref{propo:H_12_I_characterization}. We begin by proving auxiliary lemmas.
\begin{lemma}\label{lem:substitute_t_by_dz}
Let $c>0$ and $\alpha \in \mathbb{R}$ be such that $N+2\alpha \geq 0$. There is $C>0$ such that for any $t>0$, $\mathbf{x},\mathbf{y} \in \mathbb{R}^N$  satisfying $\dz(\mathbf{x},\mathbf{y}) \neq 0$ we have
\begin{equation}
\label{eq:substitute_1}
\frac{1}{t^{\alpha}}\mathcal{G}_{t/c}(\mathbf{x},\mathbf{y}) \leq C\frac{1}{(\dz(\mathbf{x},\mathbf{y}))^{2\alpha}}\frac{1}{w(B(\mathbf{x},\dz(\mathbf{x},\mathbf{y})))}.
\end{equation}
Moreover, if $0<t<T$, then
\begin{equation}
\label{eq:substitute_2}
\frac{1}{t^{\alpha}}\mathcal{G}_{t/c}(\mathbf{x},\mathbf{y}) \leq C\frac{1}{(\dz(\mathbf{x},\mathbf{y}))^{2\alpha}}\frac{1}{w(B(\mathbf{x},\dz(\mathbf{x},\mathbf{y})))}\exp\left(-c\frac{\dz(\mathbf{x},\mathbf{y})^2}{2T}\right).
\end{equation}
\end{lemma}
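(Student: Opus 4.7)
First I would unwind $\mathcal{G}_{t/c}(\mathbf{x},\mathbf{y})$ using the pointwise equivalence~\eqref{eq:equivalent_G_t}, reducing~\eqref{eq:substitute_1} to showing that for $d=\dz(\mathbf{x},\mathbf{y})$,
\[
\frac{1}{t^{\alpha}}\cdot\frac{1}{w(B(\mathbf{x},\sqrt{t/c}))}\,e^{-cd^{2}/t}\ \lesssim\ \frac{1}{d^{2\alpha}}\cdot\frac{1}{w(B(\mathbf{x},d))}.
\]
Multiplying through, this is equivalent to the dimensionless bound
\[
\Phi(u)\ :=\ u^{\alpha}\cdot\frac{w(B(\mathbf{x},d))}{w(B(\mathbf{x},\sqrt{t/c}))}\cdot e^{-cu}\ \leq\ C,\qquad u:=d^{2}/t.
\]

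\textbf{Case split via doubling.} I split according to whether $\sqrt{t/c}\le d$ or $\sqrt{t/c}\ge d$. When $\sqrt{t/c}\le d$ (equivalently $u\ge c$), the upper half of~\eqref{eq:measure_of_balls} gives $w(B(\mathbf{x},d))/w(B(\mathbf{x},\sqrt{t/c}))\lesssim (d\sqrt{c}/\sqrt{t})^{\mathbf{N}}\lesssim u^{\mathbf{N}/2}$, so $\Phi(u)\lesssim u^{\alpha+\mathbf{N}/2}e^{-cu}$, which is bounded on $[c,\infty)$ regardless of the sign of $\alpha+\mathbf{N}/2$ because of the exponential decay. When $\sqrt{t/c}\ge d$ (i.e.\ $u\le c$), I use the \emph{lower} half of~\eqref{eq:measure_of_balls}, which yields $w(B(\mathbf{x},d))/w(B(\mathbf{x},\sqrt{t/c}))\lesssim (d\sqrt{c}/\sqrt{t})^{N}\lesssim u^{N/2}$; hence $\Phi(u)\lesssim u^{\alpha+N/2}e^{-cu}$, and this is bounded on $(0,c]$ \emph{precisely because} $\alpha+N/2\ge 0$ — this is where the standing hypothesis $N+2\alpha\ge 0$ enters, and it is the only delicate point of the argument.

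\textbf{Deducing~\eqref{eq:substitute_2}.} Assuming now $0<t<T$, I write the exponential from $\mathcal{G}_{t/c}$ as a product:
\[
\exp\!\Bigl(-\frac{cd^{2}}{t}\Bigr)=\exp\!\Bigl(-\frac{(c/2)d^{2}}{t}\Bigr)\cdot\exp\!\Bigl(-\frac{cd^{2}}{2t}\Bigr),
\]
and observe that the second factor is dominated by $\exp(-cd^{2}/(2T))$ since $t<T$. The first factor, together with $t^{-\alpha}w(B(\mathbf{x},\sqrt{t/c}))^{-1}$, is precisely the object bounded by~\eqref{eq:substitute_1} applied with the constant $c$ replaced by $c/2$ (the ball $B(\mathbf{x},\sqrt{2t/c})$ that then appears is comparable to $B(\mathbf{x},\sqrt{t/c})$ by the doubling property). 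Combining these two estimates yields~\eqref{eq:substitute_2}.

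\textbf{Expected obstacle.} The only place where care is required is the regime $d\le\sqrt{t/c}$ in the first part: the measure ratio there is bounded only by $u^{N/2}$ (not by $u^{\mathbf{N}/2}$), so we really do need $\alpha+N/2\ge 0$ to absorb the factor $u^{\alpha}$ when $u\to 0^{+}$. Everything else is routine doubling plus the sub-exponential decay of $u^{\beta}e^{-cu}$ for large $u$.
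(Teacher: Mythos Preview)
Your proof is correct and follows essentially the same route as the paper: the identical case split (comparing $\sqrt{t}$ with $\dz(\mathbf{x},\mathbf{y})$) together with the $\mathbf{N}$-versus-$N$ doubling estimate from~\eqref{eq:measure_of_balls}, with the hypothesis $N+2\alpha\ge 0$ used exactly where you indicate. The only cosmetic differences are that the threshold should read $u\ge 1/c$ rather than $u\ge c$, and that the paper obtains~\eqref{eq:substitute_2} by retaining the factor $\exp(-c\,\dz(\mathbf{x},\mathbf{y})^2/(2t))$ throughout Case~1 and then using $t<T$, rather than splitting the exponential afterwards---but this is the same computation rearranged.
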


\begin{proof}
\textbf{Case 1.} $\sqrt{t} \leq \dz(\mathbf{x},\mathbf{y})$. Using~\eqref{eq:measure_of_balls}, we have
\begin{equation}
\begin{split}
\frac{1}{t^{\alpha}}\mathcal{G}_{t/c}(\mathbf{x},\mathbf{y}) &\lesssim \frac{1}{t^{\alpha}}\frac{1}{w(B(\mathbf{x},\dz(\mathbf{x},\mathbf{y})))}\frac{w(B(\mathbf{x},\dz(\mathbf{x},\mathbf{y})))}{w(B(\mathbf{x},\sqrt{t}))}\exp\left(-c\frac{\dz(\mathbf{x},\mathbf{y})^2}{t}\right)\\ &\lesssim \frac{1}{t^{\alpha}}\frac{1}{w(B(\mathbf{x},\dz(\mathbf{x},\mathbf{y})))}\left(\frac{\dz(\mathbf{x},\mathbf{y})}{\sqrt{t}}\right)^{\mathbf{N}}\exp\left(-c\frac{\dz(\mathbf{x},\mathbf{y})^2}{t}\right).
\end{split}
\end{equation}
Finally, the last term is estimated by
\begin{equation}
\label{eq:doubling_eq}
\begin{split}
&\frac{1}{t^{\alpha}}\frac{1}{w(B(\mathbf{x},\dz(\mathbf{x},\mathbf{y})))}\left(\frac{\dz(\mathbf{x},\mathbf{y})}{\sqrt{t}}\right)^{\mathbf{N}} \left(\frac{\sqrt{t}}{\dz(\mathbf{x},\mathbf{y})}\right)^{\mathbf{N}+2\alpha}\exp\left(-c\frac{\dz(\mathbf{x},\mathbf{y})^2}{2t}\right)\\&=\frac{1}{(\dz(\mathbf{x},\mathbf{y}))^{2\alpha}}\frac{1}{w(B(\mathbf{x},\dz(\mathbf{x},\mathbf{y})))}\exp\left(-c\frac{\dz(\mathbf{x},\mathbf{y})^2}{2t}\right).
\end{split}
\end{equation}
\\
\textbf{Case 2.} $\sqrt{t} \geq \dz(\mathbf{x},\mathbf{y})$. We proceed the same way as in Case 1 with $\mathbf{N}$ replaced by $N$ (it is possible thanks to~\eqref{eq:measure_of_balls}).

In order to prove~\eqref{eq:substitute_2}, we put $t=T$ in~\eqref{eq:doubling_eq}, so the lemma follows.
\end{proof}

\begin{lemma}\label{lem:summation}
Let $T>0$ and $\alpha>0$. There exists a constant $C>0$ such that for any $\mathbf{y} \in \mathbb{R}^N$ we have
\begin{enumerate}[(A)]
\item{$\int_{(\mathcal{O}(B(\mathbf{y},T)))^c}\dz(\mathbf{x},\mathbf{y})^{-2\alpha}\frac{1}{w(B(\mathbf{y},\dz(\mathbf{x},\mathbf{y})))}\,dw(\mathbf{x}) \leq CT^{-2\alpha}$,}\label{numitem:summation_outside}
\item{$\int_{\mathcal{O}(B(\mathbf{y},T))}\dz(\mathbf{x},\mathbf{y})^{2\alpha}\frac{1}{w(B(\mathbf{y},\dz(\mathbf{x},\mathbf{y})))}\,dw(\mathbf{x}) \leq CT^{2\alpha}$.}\label{numitem:summation_inside}
\end{enumerate}
\end{lemma}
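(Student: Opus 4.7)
The plan is to prove both bounds by a dyadic decomposition of the integration region into shells where $\dz(\mathbf{x},\mathbf{y})$ is essentially constant, and then sum a geometric series. The two ingredients that make this work are the doubling property~\eqref{eq:measure_of_balls} for $w$ and the bound $w(\mathcal{O}(B(\mathbf{y},r))) \leq |G|\,w(B(\mathbf{y},r))$.

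For part~\eqref{numitem:summation_outside}, the first step is to write
\begin{equation*}
(\mathcal{O}(B(\mathbf{y},T)))^{c} \subset \bigcup_{n=0}^{\infty}\{\mathbf{x}:2^{n}T \leq \dz(\mathbf{x},\mathbf{y})< 2^{n+1}T\} =: \bigcup_{n=0}^{\infty}A_{n}.
\end{equation*}
On $A_{n}$, one has $\dz(\mathbf{x},\mathbf{y})^{-2\alpha}\leq (2^{n}T)^{-2\alpha}$ and $w(B(\mathbf{y},\dz(\mathbf{x},\mathbf{y})))\geq w(B(\mathbf{y},2^{n}T))$. Also $A_{n} \subset \mathcal{O}(B(\mathbf{y},2^{n+1}T))$, so its $w$-measure is at most $|G|\,w(B(\mathbf{y},2^{n+1}T))$, which by doubling is comparable to $w(B(\mathbf{y},2^{n}T))$. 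Hence the integral over $A_{n}$ is bounded by a constant times $(2^{n}T)^{-2\alpha}$, and summing the geometric series in $n$ (using $\alpha>0$) yields the required $CT^{-2\alpha}$.

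For part~\eqref{numitem:summation_inside}, I decompose the orbit ball inward instead:
\begin{equation*}
\mathcal{O}(B(\mathbf{y},T))\setminus\{\mathbf{x}:\dz(\mathbf{x},\mathbf{y})=0\} \subset \bigcup_{n=0}^{\infty}\{\mathbf{x}:2^{-n-1}T \leq \dz(\mathbf{x},\mathbf{y})<2^{-n}T\}.
\end{equation*}
On the $n$-th shell we have $\dz(\mathbf{x},\mathbf{y})^{2\alpha}\leq (2^{-n}T)^{2\alpha}$, the shell is contained in $\mathcal{O}(B(\mathbf{y},2^{-n}T))$ of $w$-measure $\lesssim w(B(\mathbf{y},2^{-n}T))$, and $w(B(\mathbf{y},\dz(\mathbf{x},\mathbf{y})))\geq w(B(\mathbf{y},2^{-n-1}T))$, which by doubling is comparable to $w(B(\mathbf{y},2^{-n}T))$. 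So the $n$-th piece contributes $\lesssim (2^{-n}T)^{2\alpha}$, and summing (again using $\alpha>0$) gives $CT^{2\alpha}$. The set where $\dz(\mathbf{x},\mathbf{y})=0$, being a finite union of $G$-translates of $\{\mathbf{y}\}$, is $w$-null and can be ignored.

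There is no real obstacle here; the only point that requires a small amount of care is making sure the doubling-type constants do not depend on $\mathbf{y}$ or $T$, which is guaranteed by~\eqref{eq:measure_of_balls} holding uniformly. Both estimates are thus routine consequences of the doubling property combined with dyadic shell decomposition and geometric summation.
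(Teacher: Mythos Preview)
Your proof is correct and follows essentially the same route as the paper: for part~\eqref{numitem:summation_outside} the paper uses exactly the outward dyadic shell decomposition $\{2^{n}T\le\dz(\mathbf{x},\mathbf{y})<2^{n+1}T\}$ together with doubling and the bound $w(\mathcal{O}(B))\le |G|\,w(B)$, and then states that part~\eqref{numitem:summation_inside} is similar. Your inward dyadic decomposition for~\eqref{numitem:summation_inside} is precisely the natural ``similar'' argument the paper alludes to.
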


\begin{proof}
We will prove only~\eqref{numitem:summation_outside}, the proof of~\eqref{numitem:summation_inside} is similar. By the doubling property of the measure $w$ we have
\begin{align*}
&\int_{(\mathcal{O}(B(\mathbf{y},T)))^c}\dz(\mathbf{x},\mathbf{y})^{-2\alpha}\frac{1}{w(B(\mathbf{y},\dz(\mathbf{x},\mathbf{y})))}\,dw(\mathbf{x}) \\ &\lesssim \sum_{n=0}^{\infty}\int_{2^{n}T \leq \dz(\mathbf{x},\mathbf{y}) < 2^{n+1}T} \dz(\mathbf{x},\mathbf{y})^{-2\alpha}\frac{1}{w(B(\mathbf{y},\dz(\mathbf{x},\mathbf{y})))}\, dw(\mathbf{x})\\ &\lesssim
\sum_{n=0}^{\infty}\int_{2^{n}T \leq \dz(\mathbf{x},\mathbf{y}) < 2^{n+1}T} (2^n T)^{-2\alpha}\frac{1}{w(B(\mathbf{y},2^{n}T))}\, dw(\mathbf{x}) \\ &\lesssim |G|T^{-2\alpha}\sum_{n=0}^{\infty}(2^n)^{-2\alpha}\frac{w(B(\mathbf{y},2^{n+1}T))}{w(B(\mathbf{y},2^{n}T))} \leq CT^{-2\alpha}.
\end{align*}
\end{proof}

Similarly as in the case of classical local Hardy spaces~\cite{Goldberg79}, we  characterize the local Hardy space $H_{{\rm{loc}},T}^1$ in terms of the global Hardy space $H^1$.

\begin{proposition}\label{propo:H_12_I_characterization}
Suppose that $f \in L^{1}(\mathbb{R}^N,dw)$ and $T>0$. Then $f \in H_{{\rm{loc}},T}^1$ if and only if $({H_{T^2/2}}-I)f \in H^1$. Moreover,
\begin{equation}
\|f\|_{H^{1}_{{\rm{loc}},T}} \lesssim \|({H_{T^2/2}}-I)f\|_{H^{1}}+\|f\|_{L^1(\mathbb{R}^N,dw)} \lesssim \|f\|_{H^{1}_{{\rm{loc}},T}}
\end{equation}
uniformly in $T$.
\end{proposition}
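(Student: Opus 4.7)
My plan rests on the identity $H_t(H_{T^2/2}-I)f = H_{t+T^2/2}f - H_tf$, the radial maximal characterization of $H^1$ from Proposition~\ref{propo:radial_nontangential}, and the trivial inequality $\|f\|_{L^1(dw)} \leq \|f\|_{H^{1}_{{\rm{loc}},T}}$, obtained from $|f|\leq f^{*}_{{\rm{loc}},T}$ a.e.\ by taking an a.e.\ subsequential limit of $H_{t_n}f$ as $t_n\to 0^+$.

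For the direction $\|(H_{T^2/2}-I)f\|_{H^1}\lesssim \|f\|_{H^{1}_{{\rm{loc}},T}}$, I set $F:=(H_{T^2/2}-I)f$ and estimate $\mathcal{M}_R F(\mathbf{x})$ by splitting at $t=T^2/2$. When $0<t\leq T^2/2$ both $t$ and $t+T^2/2$ lie in $(0,T^2]$, giving $|H_t F(\mathbf{x})|\leq 2 f^{*}_{{\rm{loc}},T}(\mathbf{x})$. When $t>T^2/2$ I use
\[
\sup_{t>T^2/2}|H_t F(\mathbf{x})| \leq \int K(\mathbf{x},\mathbf{y})|f(\mathbf{y})|\,dw(\mathbf{y}), \qquad K(\mathbf{x},\mathbf{y}) := \sup_{t>T^2/2}\bigl|h_{t+T^2/2}(\mathbf{x},\mathbf{y})-h_t(\mathbf{x},\mathbf{y})\bigr|,
\]
and claim $\sup_{\mathbf{y}}\int K(\mathbf{x},\mathbf{y})\,dw(\mathbf{x}) \leq C$, which by Fubini yields $L^1$-norm $\lesssim \|f\|_{L^1(dw)}\leq\|f\|_{H^{1}_{{\rm{loc}},T}}$. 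To verify this, I split on whether $\dz(\mathbf{x},\mathbf{y})\geq T$ or $<T$. In the far region $\dz(\mathbf{x},\mathbf{y})\geq T$ I rewrite the kernel difference as $\int_t^{t+T^2/2}\partial_s h_s\,ds$, invoke Theorem~\ref{teo:theoremGauss}\eqref{numitem:time} (yielding $|\partial_s h_s|\leq C s^{-1}\mathcal{G}_{s/c}$), and apply~\eqref{eq:substitute_1} with $\alpha=1$ to obtain the $t$-independent bound
\[
K(\mathbf{x},\mathbf{y}) \leq C\,\frac{T^2}{\dz(\mathbf{x},\mathbf{y})^2\,w(B(\mathbf{x},\dz(\mathbf{x},\mathbf{y})))};
\]
Lemma~\ref{lem:summation}\eqref{numitem:summation_outside}, together with the comparison $w(B(\mathbf{x},r))\asymp w(B(\mathbf{y},r))$ whenever $\dz(\mathbf{x},\mathbf{y})\leq r$ (which follows from~\eqref{eq:measure_of_ball_formula} and $G$-invariance of $w$), then gives the needed uniform $\int\leq C$. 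In the near region $\dz(\mathbf{x},\mathbf{y})<T$ I use $|h_{t+T^2/2}-h_t|\leq h_{t+T^2/2}+h_t$, Theorem~\ref{teo:theoremGauss}\eqref{numitem:Gauss}, and the fact that $w(B(\mathbf{x},\sqrt{s}))\gtrsim w(B(\mathbf{x},T))$ for $s>T^2/2$ to get $K(\mathbf{x},\mathbf{y})\leq C/w(B(\mathbf{x},T))$, and integrate over $\{\mathbf{x}:\dz(\mathbf{x},\mathbf{y})<T\}\subset\mathcal{O}(B(\mathbf{y},T))$ using doubling.

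For the converse $\|f\|_{H^{1}_{{\rm{loc}},T}}\lesssim \|(H_{T^2/2}-I)f\|_{H^1}+\|f\|_{L^1(dw)}$, I use the decomposition $H_t f = H_{t+T^2/2}f - H_t F$ for $0<t\leq T^2$. The second summand is pointwise bounded by $\mathcal{M}_R F(\mathbf{x})$, with $L^1$-norm $\lesssim\|F\|_{H^1}$ by Proposition~\ref{propo:radial_nontangential}. The first is dominated by $\sup_{s\in(T^2/2,3T^2/2]}|H_s f(\mathbf{x})|$; on this $s$-interval $\sqrt{s}\asymp T$, so Theorem~\ref{teo:theoremGauss}\eqref{numitem:Gauss} combined with doubling yields the uniform bound $h_s(\mathbf{x},\mathbf{y})\leq \frac{C}{w(B(\mathbf{x},T))}\sum_{g\in G}\exp(-c'\|\mathbf{x}-g\mathbf{y}\|^2/T^2)$, and a standard Gaussian-against-doubling integration produces $L^1$-norm $\lesssim\|f\|_{L^1(dw)}$.

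The main technical obstacle is the far-region bound on $K(\mathbf{x},\mathbf{y})$: the factor $T^2$ arising from the length of the $s$-interval has to be cancelled by the $T^{-2}$ from Lemma~\ref{lem:summation}\eqref{numitem:summation_outside}, which forces the use of the universal bound~\eqref{eq:substitute_1} rather than the stronger but $t<T$-restricted~\eqref{eq:substitute_2}. The remaining ingredients---doubling, the standard exchange of sup and integral through the kernel representation, and Gaussian integration against a doubling measure---are routine.
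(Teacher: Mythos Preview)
Your proof is correct and follows essentially the same route as the paper's: split $\mathcal{M}_R\bigl((H_{T^2/2}-I)f\bigr)$ at $t=T^2/2$, handle the small-$t$ piece by $2f^{*}_{{\rm loc},T}$, and control the large-$t$ piece via $\int_t^{t+T^2/2}\partial_s h_s\,ds$ together with Theorem~\ref{teo:theoremGauss}\eqref{numitem:time}, Lemma~\ref{lem:substitute_t_by_dz} and Lemma~\ref{lem:summation} with $\alpha=1$; the converse is handled identically by the decomposition $H_t f = H_{t+T^2/2}f - H_tF$. The only cosmetic difference is that in the near region $\dz(\mathbf{x},\mathbf{y})<T$ you use the triangle inequality $|h_{t+T^2/2}-h_t|\leq h_{t+T^2/2}+h_t$, whereas the paper keeps the $\partial_s$-representation there as well and bounds $\sup_{t>T^2/2}\frac{1}{t}\mathcal{G}_{t/c}$ directly---both lead to the same $C/w(B(\mathbf{x},T))$ bound.
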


\begin{proof}
Assume that $f \in H_{{\rm{loc}},T}^1$. We have
\begin{align*}
\mathcal{M}_R\left(({H_{T^2/2}}-I)f\right)(\mathbf{x}) &\leq \sup_{0<t\leq T^2/2}|{H_t}({H_{T^2/2}}-I)f(\mathbf{x})|+\sup_{t>T^2/2}|{H_t}({H_{T^2/2}}-I)f(\mathbf{x})|
\\&=F_{1}(\mathbf{x})+F_{2}(\mathbf{x}).
\end{align*}
By assumption, $\|F_1\|_{L^1(\mathbb{R}^N,dw)} \leq 2 \|f\|_{H^{1}_{{\rm{loc}},T}}$. We shall prove that $\|F_2\|_{L^1(\mathbb{R}^N,dw)} \lesssim \|f\|_{L^1(\mathbb{R}^N,dw)}$. Since
\begin{align*}
&{H_t}({H_{T^2/2}}-I)f(\mathbf{x})=\int_{t}^{t+T^2/2}\partial_{s}{H_s}f(\mathbf{x})\,ds,
\end{align*}
it suffices to show that
\begin{align*}
&\sup_{\mathbf{y} \in \mathbb{R}^N,\, T>0}T^2\int_{\mathbb{R}^N}\sup_{t>T^2/2}\left|\partial_{t}{h_{t}}(\mathbf{x},\mathbf{y})\right|\,dw(\mathbf{x}) < \infty.
\end{align*}
Fix $\mathbf{y} \in \mathbb{R}^N$ and split the integral into two parts: over $\mathcal{O}(B(\mathbf{y},T))$ and over $(\mathcal{O}(B(\mathbf{y},T)))^{c}$. For the first one, by Theorem~\ref{teo:theoremGauss}~\eqref{numitem:time}, we have that the integral is estimated by
\begin{align*}
&T^2\int_{\mathcal{O}(B(\mathbf{y},T))}\sup_{t>T^2/2}\frac{1}{t}\mathcal{G}_{t/c}(\mathbf{x},\mathbf{y})\,dw(\mathbf{x}) \lesssim \int_{\mathcal{O}(B(\mathbf{y},T))}\frac{1}{w(B(\mathbf{y},T))}\,dw(\mathbf{x})\lesssim 1.
\end{align*}
For the integral over $(\mathcal{O}(B(\mathbf{y},T)))^{c}$, we use Theorem~\ref{teo:theoremGauss}~\eqref{numitem:time} again. Finally, the estimation is a direct consequence of Lemma~\ref{lem:substitute_t_by_dz} and Lemma~\ref{lem:summation} with $\alpha=1$.

For the converse implication, suppose that $({H_{T^2/2}}-I)f \in H^1$. We have
\begin{align*}
f^{*}_{{\rm{loc}},T}(\mathbf{x}) \leq \sup_{0<t \leq T^2}|{H_t}({H_{T^2/2}}-I)f(\mathbf{x})|+\sup_{0<t \leq T^2}|{H_{t+T^2/2}}f(\mathbf{x})|=F_3(\mathbf{x})+F_{4}(\mathbf{x}).
\end{align*}
Obviously, $\|F_3\|_{L^1(\mathbb{R}^N,dw)} \lesssim \|({H_{T^2/2}}-I)f\|_{H^1}$. For the purpose of estimating the second summand, we use Theorem~\ref{teo:theoremGauss}~\eqref{numitem:Gauss}. We have
\begin{align*}
&\sup_{0<t \leq T^2}{h_{t+T^2/2}}(\mathbf{x},\mathbf{y}) \lesssim \sup_{0<t \leq T^2}\mathcal{G}_{(t+T^2/2)/c}(\mathbf{x},\mathbf{y}) \lesssim \frac{1}{w\left(B\left(\mathbf{x},T\right)\right)}\exp \left(-\frac{2c\dz(\mathbf{x},\mathbf{y})^2}{3T^2} \right)
\end{align*}
 independently of $\mathbf{y} \in \mathbb{R}^N$, which implies $\|F_4\|_{L^1(\mathbb{R}^N,dw)} \lesssim \|f\|_{L^1(\mathbb{R}^N,dw)}$.
\end{proof}
\subsection{Characterization by the atomic decomposition}

\begin{definition}\label{def:atomic_Dunkl}\normalfont
Let $T>0$. The function $a(\mathbf{x})$ is called an \textit{atom for the local Hardy space} $H^{1,{{\rm{at}}}}_{{\rm{loc}},T}$ if
\begin{enumerate}[(A)]
\item{$\supp a \subset B(\mathbf{x}_0,r)$ for some $\mathbf{x}_0 \in \mathbb{R}^N$ and $r>0$,}\label{numitem:support}
\item{$\sup_{\mathbf{y} \in \mathbb{R}^N}|a(\mathbf{y})| \leq w(B(\mathbf{x}_0,r))^{-1}$,}\label{numitem:L_infty}
\item{If $r < T$, then $\int_{\mathbb{R}^N}a(\mathbf{x})\,dw(\mathbf{x})=0$.}\label{numitem:cancellations}
\end{enumerate}
A function $f$ belongs to the \textit{local Hardy space} $H^{1,{{\rm{at}}}}_{{\rm{loc}},T}$ if there are $c_j\in\mathbb{ C}$ and atoms $a_j$ for ${H^{1,{{\rm{at}}}}}$ such that $\sum_{j=1}^{\infty}|c_j|<\infty$,
\begin{equation}
\label{eq:atomic_representation_local}
f=\sum_{j=1}^{\infty}c_j\,a_j\,.
\end{equation}
{In this case,} set
$
\|f\|_{H^{1,{{\rm{at}}}}_{{\rm{loc}},T}}=\inf\,\Bigl\{\,\sum_{j=1}^{\infty}|c_j|\,\Bigr\}\,,
$
where the infimum is taken over all representations {\eqref{eq:atomic_representation_local}}.
\end{definition}

Let us emphasis that we have two type of atoms for $H^{1,{{\rm{at}}}}_{{\rm{loc}},T}$. If an atom $a$ is supported by $B(\mathbf{x}_0,r)$ with $r\geq T$, then only the size condition~\eqref{numitem:L_infty} is required. However, if $r<T$, then  additionally~\eqref{numitem:cancellations} must hold.

\begin{lemma}\label{lem:atom_in_H1}
There exists a constant $C>0$ such that for any $T>0$ and for any function $a(\mathbf{x})$ such that $\supp a \subset B(\mathbf{p},T)$ for some $\mathbf{p} \in \mathbb{R}^N$ and $\|a\|_{L^{\infty}} \leq w(B(\mathbf{p},T))^{-1}$ the following inequality holds
\begin{equation}
\|a\|_{H^{1}_{{\rm{loc}},T}} \leq C.
\end{equation}
\end{lemma}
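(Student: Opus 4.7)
The plan is to estimate $\int_{\mathbb{R}^{N}} a^{*}_{{\rm{loc}},T}\,dw$ directly, splitting $\mathbb{R}^{N}$ into the orbit neighborhood $\mathcal{O}(B(\mathbf{p},2T))$ and its complement. On the near piece I will use only the trivial bound $|H_{t}a(\mathbf{x})| \leq \|a\|_{L^{\infty}} \leq w(B(\mathbf{p},T))^{-1}$ (which follows from the positivity of $h_{t}$ in Theorem~\ref{teo:theoremGauss}~\eqref{numitem:Gauss} together with $H_{t}1=1$). Combining this with $G$-invariance of $w$, the doubling property~\eqref{eq:measure_of_balls}, and $w(\mathcal{O}(B(\mathbf{p},2T))) \leq |G|\,w(B(\mathbf{p},2T))$ immediately yields
\[
\int_{\mathcal{O}(B(\mathbf{p},2T))} a^{*}_{{\rm{loc}},T}\,dw \;\lesssim\; 1.
\]

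For the far piece, for $\mathbf{x} \in (\mathcal{O}(B(\mathbf{p},2T)))^{c}$ and $\mathbf{y} \in B(\mathbf{p},T)$ one has $\dz(\mathbf{x},\mathbf{y}) \geq \dz(\mathbf{x},\mathbf{p}) - T \geq \tfrac{1}{2}\dz(\mathbf{x},\mathbf{p}) \geq T$. I will insert the Gaussian majorant from Theorem~\ref{teo:theoremGauss}~\eqref{numitem:Gauss} into $H_{t}a(\mathbf{x})$ and then apply Lemma~\ref{lem:substitute_t_by_dz} with $\alpha=0$, the lemma's threshold $T$ replaced by $T^{2}$, to eliminate $t$ from the bound. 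This gives, for every $0 < t \leq T^{2}$,
\[
|H_{t}a(\mathbf{x})| \;\lesssim\; \frac{1}{w(B(\mathbf{x},\dz(\mathbf{x},\mathbf{p})))}\,\exp\!\left(-c\,\frac{\dz(\mathbf{x},\mathbf{p})^{2}}{T^{2}}\right),
\]
where the factor $\|a\|_{L^{\infty}} w(B(\mathbf{p},T))$ collapses to $1$ and $\dz(\mathbf{x},\mathbf{y})$ was absorbed into $\dz(\mathbf{x},\mathbf{p})$ via the comparability above. Taking $\sup_{0<t\leq T^{2}}$ preserves this bound, and a dyadic decomposition of the far region into annuli $\{2^{n}T \leq \dz(\mathbf{x},\mathbf{p}) < 2^{n+1}T\}$ combined with the super-polynomial Gaussian decay produces a summable series $\sum_{n\geq 1} e^{-c\,4^{n}}$, which is $O(1)$ uniformly in $T$ and $\mathbf{p}$.

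The one delicate ingredient is the comparison $w(B(\mathbf{x},\dz(\mathbf{x},\mathbf{p}))) \asymp w(B(\mathbf{p},\dz(\mathbf{x},\mathbf{p})))$, which is needed either to invoke Lemma~\ref{lem:summation}~\eqref{numitem:summation_outside} directly or to carry out the annular sum using the doubling of $w(B(\mathbf{p},\cdot))$. I would prove it by picking $g \in G$ with $\|g(\mathbf{x})-\mathbf{p}\|=\dz(\mathbf{x},\mathbf{p})$, using $G$-invariance of $w$ to rewrite $w(B(\mathbf{p},r)) = w(B(g^{-1}(\mathbf{p}),r))$, and then applying~\eqref{eq:measure_of_balls} between $B(\mathbf{x},r)$ and $B(g^{-1}(\mathbf{p}),r)$ whose centers lie within $r$ of each other. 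Beyond this bookkeeping, the rest of the argument is a routine orbit-split plus Gaussian-decay calculation parallel to the classical local Hardy space theory of Goldberg.
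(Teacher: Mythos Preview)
Your proof is correct and follows essentially the same orbit-split strategy as the paper: split into $\mathcal{O}(B(\mathbf{p},2T))$ and its complement, then on the far piece feed the Gaussian bound from Theorem~\ref{teo:theoremGauss}~\eqref{numitem:Gauss} into Lemma~\ref{lem:substitute_t_by_dz} with $\alpha=0$. The only cosmetic differences are that on the near piece the paper integrates the Gaussian majorant via~\eqref{eq:integral_of_kernel} rather than using your cleaner $|H_t a|\le\|a\|_{L^\infty}$, and on the far piece it invokes~\eqref{eq:integral_of_kernel} directly instead of writing out the dyadic annular sum.
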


\begin{proof}
It is enough to prove that
\begin{align*}
\frac{1}{w(B(\mathbf{p},T))}\int_{\mathbb{R}^N}\sup_{0<t \leq T^2}\int_{B(\mathbf{p},T)}{h_t}(\mathbf{x},\mathbf{y})\, dw(\mathbf{y})\,dw(\mathbf{x}) \leq C,
\end{align*}
where $C>0$ does not depend on $\mathbf{p}$. Split the outer integral into two integrals: over $(\mathcal{O}(B(\mathbf{p},2T)))^{c}$ and over $\mathcal{O}(B(\mathbf{p},2T))$. For the first one, we use Theorem~\ref{teo:theoremGauss}~\eqref{numitem:Gauss}, Lemma~\ref{lem:substitute_t_by_dz} with $\alpha=0$, and the fact that for $\mathbf{y} \in \mathcal{O}(B(\mathbf{p},T))$ and $\mathbf{x} \not\in \mathcal{O}(B(\mathbf{p},2T))$ we have $\dz(\mathbf{x},\mathbf{y})>T$. Therefore, the first integral is bounded by
\begin{align*}
&\frac{C}{w(B(\mathbf{p},T))}\int_{(\mathcal{O}(B(\mathbf{p},T)))^{c}}\int_{B(\mathbf{p},T)}\frac{1}{w(B(\mathbf{x},T))}\exp\left(-c\frac{\dz(\mathbf{x},\mathbf{y})^2}{T^2}\right)\,dw(\mathbf{y})\,dw(\mathbf{x})\\&\leq\frac{C}{w(B(\mathbf{p},T))}\int_{B(\mathbf{p},T)}\int_{\mathbb{R}^N}\frac{1}{w(B(\mathbf{x},T))}\exp\left(-c\frac{\dz(\mathbf{x},\mathbf{y})^2}{T^2}\right)\,dw(\mathbf{x})\,dw(\mathbf{y}).
\end{align*}
Let us recall the fact, that
\begin{equation}\label{eq:integral_of_kernel}
\int_{\mathbb{R}^N}\frac{1}{w(B(\mathbf{x},\sqrt{t}))}\exp\left(-c\frac{\dz(\mathbf{x},\mathbf{y})^2}{t}\right)\,dw(\mathbf{x}) \leq C,
\end{equation}
where $C>0$ is independent of $\mathbf{y}$ and $t$. Using this fact, we see that the estimation for the first part of integral is complete. Thanks to~\eqref{eq:integral_of_kernel}, the second part of the integral is also estimated by constant.
\end{proof}

\begin{proposition}\label{propo:Goldberg}
The spaces $H^{1,{{\rm{at}}}}_{{\rm{loc}},T}$ and $H^1_{{\rm{loc}},T}$ coincide and their norms are equivalent. Moreover, there exists a constant $C>0$ such that for any $T>0$ if $f \in H^{1,{{\rm{at}}}}_{{\rm{loc}},T}$ and $\supp f \subset B(\mathbf{y}_0,T)$, then there are $H^{1,{{\rm{at}}}}_{{\rm{loc}},T}$ atoms $a_j$ such that $\supp a_j \subset B(\mathbf{y}_0,4T)$ and
\begin{equation}
f=\sum_{j=1}^{\infty}c_ja_j, \qquad \sum_{j=1}^{\infty}|c_j| \leq C\|f\|_{H^{1,{{\rm{at}}}}_{{\rm{loc}},T}}.
\end{equation}
\end{proposition}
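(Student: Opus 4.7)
The proof proceeds in three stages. The first two establish the coincidence of the two spaces, and the third refines the decomposition to obtain the localized support.

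\textbf{Stage 1 ($H^{1,{{\rm{at}}}}_{{\rm{loc}},T} \hookrightarrow H^1_{{\rm{loc}},T}$).} It suffices to show that every local atom $a$ with $\supp a \subset B(\mathbf{x}_0, r)$ satisfies $\|a\|_{H^1_{{\rm{loc}},T}} \lesssim 1$. If $r < T$, then \eqref{numitem:cancellations} makes $a$ a classical $H^{1,{{\rm{at}}}}$-atom, so Theorem~\ref{teo:maximal_atom} combined with the trivial bound $\|\cdot\|_{H^1_{{\rm{loc}},T}} \leq \|\cdot\|_{H^1}$ yields the claim. If $r \geq T$, I cover $B(\mathbf{x}_0, r)$ by a bounded-overlap family $\{B(\mathbf{z}_\ell, T)\}$, split $a = \sum_\ell \tilde a_\ell$ with $\supp \tilde a_\ell \subset B(\mathbf{z}_\ell, T)$ and $\|\tilde a_\ell\|_\infty \leq w(B(\mathbf{x}_0, r))^{-1}$, and write $\tilde a_\ell = \mu_\ell b_\ell$ with $\mu_\ell = w(B(\mathbf{z}_\ell, T))/w(B(\mathbf{x}_0, r))$, so each $b_\ell$ satisfies the hypotheses of Lemma~\ref{lem:atom_in_H1} on $B(\mathbf{z}_\ell, T)$. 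Doubling and bounded overlap give $\sum_\ell \mu_\ell \lesssim 1$.

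\textbf{Stage 2 ($H^1_{{\rm{loc}},T} \hookrightarrow H^{1,{{\rm{at}}}}_{{\rm{loc}},T}$).} Write $f = (I - {H_{T^2/2}})f + {H_{T^2/2}}f$. By Proposition~\ref{propo:H_12_I_characterization}, the first summand lies in $H^1$ with norm controlled by $\|f\|_{H^1_{{\rm{loc}},T}}$, and Theorem~\ref{teo:maximal_atom} gives an $H^1$-atomic decomposition of it; each $H^1$-atom is automatically a local atom since its mean-zero property covers the case $r < T$. For ${H_{T^2/2}}f$, fix a locally finite partition $\mathbb{R}^N = \bigsqcup_j Q_j$ with $Q_j \subset B(\mathbf{z}_j, T)$, set $f_j = f\mathbf{1}_{Q_j}$, and decompose ${H_{T^2/2}}f_j$ via the orbit-annular shells $\{\mathbf{x}: 2^{n-1}T \leq \dz(\mathbf{x},\mathbf{z}_j) < 2^nT\}$ for $n \geq 1$ (plus the $n=0$ shell $\mathcal{O}(B(\mathbf{z}_j, T))$), further split into the $|G|$ constituent balls $B(g(\mathbf{z}_j), 2^n T)$. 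Theorem~\ref{teo:theoremGauss}~\eqref{numitem:Gauss} plus doubling give each resulting piece $F_{j,n,g}$ with $\|F_{j,n,g}\|_\infty \lesssim e^{-c4^n}\,2^{n\mathbf{N}}\,w(B(g(\mathbf{z}_j), 2^nT))^{-1}\,\|f_j\|_{L^1(dw)}$, so $F_{j,n,g}$ is a multiple of a local atom (radius $\geq T$, no cancellation required); summing $\sum_{j,n,g} \lesssim \|f\|_{L^1(dw)} \lesssim \|f\|_{H^1_{{\rm{loc}},T}}$.

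\textbf{Stage 3 (localized decomposition).} Given $\supp f \subset B(\mathbf{y}_0, T)$, set $\eta_0 = w(B(\mathbf{y}_0, T))^{-1}\mathbf{1}_{B(\mathbf{y}_0, T)}$ (itself a local atom supported in $B(\mathbf{y}_0, 4T)$), $c_0 = \int f\,dw$, and $\tilde f = f - c_0 \eta_0$. Then $|c_0| \leq \|f\|_{L^1(dw)} \lesssim \|f\|_{H^{1,{{\rm{at}}}}_{{\rm{loc}},T}}$, and $\tilde f$ is mean-zero with $\supp \tilde f \subset B(\mathbf{y}_0, T)$. The idea is to promote $\tilde f$ to a globally $H^1$-function, then apply the Coifman--Latter decomposition of Theorem~\ref{teo:maximal_atom}. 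The portion of the nontangential maximal function for $0 < t \leq T^2$ is handled by Stages 1--2, while for $t > T^2$ I exploit mean zero to write
\begin{equation*}
{H_t}\tilde f(\mathbf{x}) = \int \bigl({h_t}(\mathbf{x}, \mathbf{y}) - {h_t}(\mathbf{x}, \mathbf{y}_0)\bigr)\,\tilde f(\mathbf{y})\,dw(\mathbf{y}),
\end{equation*}
and estimate the difference by a H\"older-type continuity bound on ${h_t}(\mathbf{x}, \cdot)$ in the Euclidean $\mathbf{y}$-variable (derivable by interpolating Theorem~\ref{teo:theoremGauss}~\eqref{numitem:Gauss} against the Dunkl-gradient bound~\eqref{numitem:space}). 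This gives $\tilde f \in H^1$ with the desired norm control. Since $\tilde f$ vanishes off $B(\mathbf{y}_0, T)$ and has mean zero, its radial maximal function decays sharply outside $B(\mathbf{y}_0, T)$, so the Whitney cover underlying the Coifman--Latter construction can be arranged to produce atoms supported in $B(\mathbf{y}_0, 4T)$; combining with $c_0 \eta_0$ gives the required decomposition.

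The principal difficulty lies in Stage 3, specifically (i) deriving the H\"older continuity of ${h_t}$ in the \emph{Euclidean} variable from the Dunkl-gradient bound, and (ii) engineering the Whitney bookkeeping so that the exact dilation factor $4$ (rather than a larger universal constant) is obtained.
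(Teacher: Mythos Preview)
Your Stages 1 and 2 are essentially the paper's argument. In Stage~2 the paper handles $H_{T^2/2}f$ more cheaply than your orbit-shell decomposition: it cuts $H_{T^2/2}f$ itself (not $H_{T^2/2}f_j$) into pieces on a lattice of cubes of diameter $\sim T$, so each piece is a multiple of a local atom of radius $T$, with coefficients summing to $\lesssim\|f\|_{L^1}$ directly from the Gaussian bound. Your version also works but is more laborious.

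Stage 3 diverges from the paper, and the two items you list as ``principal difficulties'' are not just bookkeeping: they are genuine gaps that your outline does not close.

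\emph{(i) H\"older continuity.} Theorem~\ref{teo:theoremGauss}\eqref{numitem:space} bounds the \emph{Dunkl} derivative $T_{j}h_t$, which equals $\partial_{j}h_t$ plus the non-local difference quotients $\sum_\alpha \tfrac{k(\alpha)}{2}\langle\alpha,e_j\rangle\,[h_t(\mathbf{x},\mathbf{y})-h_t(\mathbf{x},\sigma_\alpha\mathbf{y})]/\langle\alpha,\mathbf{y}\rangle$. These quotients are not controlled pointwise by $\mathcal{G}_{t/c}$ near the walls $\alpha^\perp$, so a bound on $T_jh_t$ does \emph{not} yield a bound on $\partial_j h_t$, and no interpolation between \eqref{numitem:Gauss} and \eqref{numitem:space} delivers the Euclidean Lipschitz estimate $|h_t(\mathbf{x},\mathbf{y})-h_t(\mathbf{x},\mathbf{y}_0)|\lesssim (\|\mathbf{y}-\mathbf{y}_0\|/\sqrt t)\,\mathcal{G}_{t/c}$. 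Such H\"older bounds for the Dunkl heat kernel do exist, but they require a separate argument not contained in the toolbox of this paper. Without this, you cannot control $\sup_{t>T^2}|H_t\tilde f|$ in $L^1$, and the claim $\tilde f\in H^1$ is unproved.

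\emph{(ii) Support of the output atoms.} Theorem~\ref{teo:maximal_atom} is quoted from \cite{Dziubanski18} as a black box; nothing in the present paper lets you reach inside its Calder\'on--Zygmund/Whitney construction to force the resulting atoms into $B(\mathbf{y}_0,4T)$. Asserting that the decomposition ``can be arranged'' is not an argument.

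The paper's Stage 3 sidesteps both issues by an elementary cutoff trick. Take \emph{any} local atomic decomposition $f=\sum c_j a_j$ and multiply by a smooth $\phi$ with $\phi\equiv 1$ on $B(\mathbf{y}_0,T)$ and $\supp\phi\subset B(\mathbf{y}_0,2T)$; since $\phi f=f$, it suffices to re-decompose each $\phi a_j$. If $a_j$ is associated with a ball of radius $\geq T$, then $\phi a_j$ is either zero or a bounded multiple of a non-cancellation atom on $B(\mathbf{y}_0,2T)$. If $a_j$ is associated with $B(\mathbf{y},r)$, $r<T$, with $\int a_j\,dw=0$, write $\phi a_j=\phi(\mathbf{y})a_j+(\phi-\phi(\mathbf{y}))a_j$: the first term is again a cancellation atom, and the second has mean $\lambda$ with $|\lambda|\lesssim r/T$, which one absorbs into a telescoping chain of normalized indicators $w(B(\mathbf{y},2^kr))^{-1}\chi_{B(\mathbf{y},2^kr)}$ over $O(\log(T/r))$ dyadic scales, with total coefficient $\lesssim (r/T)\log(T/r)\lesssim 1$. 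This is purely combinatorial, uses no kernel regularity beyond the size bound, and localizes the supports directly.
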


\begin{proof}
The proof follows the pattern from~\cite{Goldberg79}. For the convenience of the reader, we provide details. We may assume that $T=1$, the general case is a simple consequence of the rescaling property~\eqref{eq:rescaling}. The inclusion $H^{1,{{\rm{at}}}}_{{\rm{loc}},1} \subset H^{1}_{{\rm{loc}},1}$ and the desired norm estimation is due to Lemma~\ref{lem:atom_in_H1} (for atoms supported by   $ B(\mathbf{x},r)$ with $r \geq 1$) and Theorem~\ref{teo:maximal_atom} (if $r<1$).
To prove the inclusion $H^{1}_{{\rm{loc}},1} \subset H^{1,{{\rm{at}}}}_{{\rm{loc}},1}$, we use Proposition~\ref{propo:H_12_I_characterization}. Suppose that $f \in H^{1}_{{\rm{loc}},1}$, then $({H_{1/2}}-I)f \in H^1$. By Theorem~\ref{teo:maximal_atom}, we have
\begin{align*}
&({H_{1/2}}-I)f(\mathbf{x})=\sum_{j=1}^{\infty} c_j a_j(\mathbf{x}),
\end{align*}
where $a_j$ are the atoms for the Hardy space $H^{1,{{\rm{at}}}}$. In particular they are the atoms for the local Hardy space $H^{1,{{\rm{at}}}}_{{\rm{loc}},1}$ and
\begin{align*}
&\sum_{j=1}^{\infty}|c_j| \lesssim \|({H_{1/2}}-I)f\|_{H^1} \lesssim \|f\|_{H^{1}_{{\rm{loc}},1}}.
\end{align*}
Therefore, it is enough to show that ${H_{1/2}}f$ admits a decomposition into $H^{1,{{\rm{at}}}}_{{\rm{loc}},1}$-atoms. Let $\bar{n}=(n_1,n_2,\ldots,n_N)$ for $n_j \in \mathbb{Z}$.
We have

\begin{align*}
{H_{1/2}}f(\mathbf{x})&=\sum_{\bar{n} \in \mathbb{Z}^N}{H_{1/2}}f(\mathbf{x})\chi_{N^{-1/2}\bar{n}+N^{-1/2}[0,1)^N}(\mathbf{x})=\sum_{\bar{n} \in \mathbb{Z}^{N}}c_{\bar{n}} b_{\bar{n}}(\mathbf{x}),
\end{align*}
where
\begin{align*}
b_{\bar{n}}(\mathbf{x})=c_{\bar{n}}^{-1}{H_{1/2}}f(\mathbf{x})\chi_{N^{-1/2}\bar{n}+N^{-1/2}[0,1)^N}(\mathbf{x}),
\end{align*}
\begin{align*}
c_{\bar{n}}=w(B(\bar{n},1))\|{H_{1/2}}f(\cdot)\chi_{N^{-1/2}\bar{n}+N^{-1/2}[0,1)^N}(\cdot)\|_{L^\infty}.
\end{align*}
It is easy to see that $b_{\bar{n}}$ are multiplies of atoms associated with balls of radii $1$. Moreover, by Theorem~\ref{teo:theoremGauss}~\eqref{numitem:Gauss}, we have
\begin{align*}
|c_{\bar{n}}|& \lesssim w(B(\bar{n},1))\sup_{\mathbf{x} \in N^{-1/2}\bar{n}+N^{-1/2}[0,1)^N}\ \int_{\mathbb{R}^N}\mathcal{G}_{2/c}(\mathbf{x},\mathbf{y})|f(\mathbf{y})|\,dw(\mathbf{y}) \\&\lesssim\int_{\mathbb{R}^N}\exp\left(-c'\dz(\bar{n},\mathbf{y})^2\right)|f(\mathbf{y})|\,dw(\mathbf{y}).
\end{align*}
Finally, it is easily seen that
\begin{align*}
\sum_{\bar{n} \in \mathbb{Z}^N}\exp\left(-c'\|\bar{n}-g(\mathbf{y})\|^2\right) \leq C
\end{align*}
independently of $g \in G$ and $\mathbf{y} \in \mathbb{R}^N$ which, thanks to~\eqref{eq:equivalent_G_t}, proves that $\sum_{\bar{n} \in \mathbb{Z}^N}|c_{\bar{n}}| \lesssim \|f\|_{L^1(\mathbb{R}^N,dw)}$, so $H^{1,{{\rm{at}}}}_{{\rm{loc}},1}$ and $H^1_{{\rm{loc}},1}$ coincide.

Let $f \in H^{1,{{\rm{at}}}}_{{\rm{loc}},T}$ be such that $\supp f \subset B(\mathbf{y}_0,1)$. Then $f$ admits an atomic decomposition $f=\sum_{j=1}^{\infty}c_j a_j$ such that $\int_{\mathbb{R}^d}a_j(\mathbf{x})\,dw(\mathbf{x})=0$ if $a_j$ is associated with a ball with radius smaller than $1$. Let $\phi$ be such a function that $\phi \in C^{\infty}_{c}(\mathbb{R}^N)$, $0 \leq \phi \leq 1$, $\supp \phi \subset B(\mathbf{y}_0,2)$, $\|\nabla \phi\| \leq 2$,  and $\phi(\mathbf{x}) \equiv 1$ for $\mathbf{x} \in B(\mathbf{y}_0,1)$. Then
\begin{align*}
&f(\mathbf{x})=f(\mathbf{x})\phi(\mathbf{x})=\sum_{j=1}^{\infty}c_j\phi(\mathbf{x})a_j(\mathbf{x}).
\end{align*}
Let as consider a single summand $\phi(\mathbf{x})a_j(\mathbf{x})=\phi(\mathbf{x})a(\mathbf{x})$. There are two cases.
\\
\textbf{Case 1.} The atom $a$ is associated with a ball $B(\mathbf{y},r)$ and $r \geq 1$. If $\|\mathbf{y}-\mathbf{y}_0\|>r+2$, then the supports of $a$ and $\phi$ are disjoint, so $a(\mathbf{x})\phi(\mathbf{x}) \equiv 0$. Otherwise, $a(\mathbf{x})\phi(\mathbf{x})$ is a multiple of atom associated with $B(\mathbf{y}_0,2)$. Indeed,
\begin{align*}
\|a\phi\|_{L^{\infty}} \leq \frac{1}{w(B(\mathbf{y},r))}=\frac{w(B(\mathbf{y},4+r))}{w(B(\mathbf{y},r))}\frac{w(B(\mathbf{y}_0,2))}{w(B(\mathbf{y},r+4))}\frac{1}{w(B(\mathbf{y}_0,2))} \leq C w(B(\mathbf{y}_0,2))^{-1}.
\end{align*} 
\\
\textbf{Case 2.} The atom $a$ is associated with a ball $B(\mathbf{y},r)$ and $r<1$. Then $\int a(\mathbf{x})\,dw(\mathbf{x})=0$. Next we prove that $\phi(\mathbf{x})a(\mathbf{x})$ can we written as a sum of $H^{1,{{\rm{at}}}}_{{\rm{loc}},1}$-atoms. Obviously,  $\|\mathbf{x}-\mathbf{y}\| \leq r$ for any $\mathbf{x} \in \supp \phi f$. We have
\begin{align*}
\phi(\mathbf{x})a(\mathbf{x})=[\phi(\mathbf{x})a(\mathbf{x})-\phi(\mathbf{y})a(\mathbf{x})]+a(\mathbf{x})\phi(\mathbf{y})=\widetilde{a}(\mathbf{x})+a(\mathbf{x})\phi(\mathbf{y}).
\end{align*}
The function $a( \cdot )\phi(\mathbf{y})$ is an atom with the cancellation condition~\eqref{numitem:cancellations}. Set $\lambda=\int_{\mathbb{R}^N}\widetilde{a}(\mathbf{x})\,dw(\mathbf{x})$. It is easily seen that $|\lambda| \leq 2r$. Let $n \in \mathbb{Z}$ be such that $2^{-n} < r \leq 2^{-n+1}$. We set
\begin{align*}
&b_0(\mathbf{x})=\widetilde{a}(\mathbf{x})-\lambda w(B(\mathbf{y},2r))^{-1} \chi_{B(\mathbf{y},2r)}(\mathbf{x}),
\end{align*}
\begin{align*}
&b_j(\mathbf{x})=w(B(\mathbf{y},2^{j}r))^{-1} \chi_{B(\mathbf{y},2^{j}r)}(\mathbf{x})-w(B(\mathbf{y},2^{j+1}r))^{-1} \chi_{B(\mathbf{y},2^{j+1}r)}(\mathbf{x})\text{ for }j=1,2,\ldots,n-1, 
\end{align*}
\begin{align*}
b_n(\mathbf{x})=w(B(\mathbf{y},2^{n}r))^{-1} \chi_{B(\mathbf{y},2^{n}r)}.
\end{align*}
Obviously, $\widetilde{a}=b_0+\sum_{j=1}^{n}\lambda b_j$. Functions $b_0,b_1,\ldots,b_{n-1}$ are multiplies of atoms with the cancellation condition~\eqref{numitem:cancellations} and $b_n$ is a multiple of an atom associated with a ball with the radius greater than $1$. Clearly, $\sum_{j=0}^{n}|\lambda| \lesssim 2^{-n}n \lesssim 1$, which finishes the proof.
\end{proof}

\subsection{Characterization by Riesz transform}

\begin{definition}\normalfont
Let $T>0$. For $j=1,2,\ldots,N$, we define the \textit{local Riesz transform} $R_j^T$  by
\begin{equation}
R_{j}^{T}f(\mathbf{x})=d_k\int_{0}^{T^2}T_j {H_{t}}f(\mathbf{x})\,\frac{dt}{\sqrt{t}},
\end{equation}
where $d_k=2^{\mathbf{N}/2}\Gamma((\mathbf{N}+1)/2)\pi^{-1/2}$.
\end{definition}
We will use Theorem~\ref{teo:Riesz} it to prove its local version.
\begin{proposition}\label{propo:local_Riesz}
Let $f \in L^1(\mathbb{R}^N,dw)$ and $T>0$. Then $f \in H^{1}_{{\rm{loc}},T}$ if and only if $R_j^{T}f \in L^1(\mathbb{R}^N,dw)$ for $j=1,2,\dots,N$. Moreover, there is a constant $C>0$, independent of $f$ and $T$, such that
\begin{equation}
C^{-1}\|f\|_{H^1_{{\rm{loc}},T}} \leq \|f\|_{L^1(\mathbb{R}^N,dw)}+\sum_{j=1}^{N}\|R_j^{T}f\|_{L^1(\mathbb{R}^N,dw)} \leq C \|f\|_{H^1_{{\rm{loc}},T}}.
\end{equation}
\end{proposition}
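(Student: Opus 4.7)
The plan is to reduce the local Riesz characterization to the global one of Theorem~\ref{teo:Riesz}. Combining Proposition~\ref{propo:H_12_I_characterization} with Theorem~\ref{teo:Riesz} applied to $g=(I-H_{T^2/2})f\in H^1$, I get
\begin{align*}
\|f\|_{H^1_{{\rm{loc}},T}} &\sim \|f\|_{L^1(\mathbb{R}^N,dw)} + \|(I-H_{T^2/2})f\|_{H^1} \\
&\sim \|f\|_{L^1(\mathbb{R}^N,dw)} + \sum_{j=1}^{N}\|R_j(I-H_{T^2/2})f\|_{L^1(\mathbb{R}^N,dw)},
\end{align*}
using $\|(I-H_{T^2/2})f\|_{L^1}\leq 2\|f\|_{L^1}$ by $L^1$-contractivity of $H_t$. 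The task thus becomes to show that $R_j^{T}f$ and $R_j(I-H_{T^2/2})f$ differ by an operator bounded on $L^1(\mathbb{R}^N,dw)$ uniformly in $T$.

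Using the semigroup property to expand $R_jH_{T^2/2}f = d_k\int_0^\infty T_jH_{t+T^2/2}f\,\tfrac{dt}{\sqrt{t}}$, substituting $s=t+T^2/2$, and then splitting $R_jf = R_j^{T}f + d_k\int_{T^2}^{\infty}T_jH_tf\,\tfrac{dt}{\sqrt{t}}$, I arrive (formally) at
\begin{align*}
R_j(I-H_{T^2/2})f - R_j^{T}f &= d_k\int_{T^2}^{\infty}T_jH_t f\left[\frac{1}{\sqrt{t}}-\frac{1}{\sqrt{t-T^2/2}}\right]dt \\
&\quad - d_k\int_{T^2/2}^{T^2} T_jH_s f\,\frac{ds}{\sqrt{s-T^2/2}}.
\end{align*}
The key ingredient for the $L^1$ estimate is $\|T_jH_tf\|_{L^1}\leq Ct^{-1/2}\|f\|_{L^1}$, a consequence of Theorem~\ref{teo:theoremGauss}(C) combined with~\eqref{eq:integral_of_kernel}. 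For the first integral the elementary bound $\left|t^{-1/2}-(t-T^2/2)^{-1/2}\right|\leq CT^2t^{-3/2}$ on $[T^2,\infty)$ produces $C\|f\|_{L^1}\cdot T^2\int_{T^2}^{\infty}t^{-2}\,dt = C\|f\|_{L^1}$. For the second, the substitution $s=T^2/2+u$ reduces the relevant integral to $\int_0^{T^2/2}(T^2/2+u)^{-1/2}u^{-1/2}\,du\leq (T/\sqrt{2})^{-1}\cdot T\sqrt{2}=2$, independent of $T$. Hence $\|R_j(I-H_{T^2/2})f-R_j^{T}f\|_{L^1}\leq C\|f\|_{L^1}$ uniformly in $T$.

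Putting everything together yields $\|f\|_{L^1}+\sum_j\|R_j^{T}f\|_{L^1}\sim\|f\|_{H^1_{{\rm{loc}},T}}$, which proves both norm equivalence and the coincidence of function classes: the ``if'' direction runs the argument backwards, since $f\in L^1$ with $R_j^Tf\in L^1$ forces $\|R_j(I-H_{T^2/2})f\|_{L^1}<\infty$, whence Theorem~\ref{teo:Riesz} gives $(I-H_{T^2/2})f\in H^1$ and Proposition~\ref{propo:H_12_I_characterization} concludes $f\in H^1_{{\rm{loc}},T}$. I expect the main technical obstacle to be the rigorous justification of the Fubini interchanges and the commutation of $R_j$ with $(I-H_{T^2/2})$, given that $R_j$ is only defined in a weak sense on $L^1$-functions; this should be handled by testing against Schwartz functions and passing to a limit via dominated convergence, following the approximation scheme already employed in~\cite[Section~8]{conjugate}.
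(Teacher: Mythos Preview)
Your proposal is correct and, in fact, somewhat more streamlined than the paper's argument. Both proofs rest on the same bridge, namely Proposition~\ref{propo:H_12_I_characterization} together with Theorem~\ref{teo:Riesz}, and both reduce the matter to showing that $R_j^{T}f$ and $R_j(I-H_{T^2/2})f$ differ by an operator bounded on $L^1(dw)$ uniformly in $T$. The paper, however, establishes the two inequalities separately: for one direction it splits $R_j(H_{T^2/2}-I)f=\int_0^{T^2}+\int_{T^2}^\infty$, commutes $(H_{T^2/2}-I)$ through the first piece, and controls the tail via the \emph{mixed} derivative bound $|\partial_s T_j h_s|\lesssim s^{-3/2}\mathcal{G}_{s/c}$ from Theorem~\ref{teo:theoremGauss}\eqref{numitem:space} with $m=1$; for the converse it writes $R_j^{T}f$ as a sum of three terms, one of which is $R_j^{T}H_{T^2/2}f$, handled again by kernel estimates. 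Your route is more symmetric: a single change of variables gives the explicit two-term difference, and the only kernel input needed is the zeroth-order bound $\|T_jH_tf\|_{L^1}\lesssim t^{-1/2}\|f\|_{L^1}$ from Theorem~\ref{teo:theoremGauss}\eqref{numitem:space} with $m=0$, avoiding the time derivative entirely. The elementary scalar inequalities you use for the two resulting integrals are correct. The residual caveat you flag about the distributional meaning of $R_j$ on $L^1$ is the same one the paper implicitly faces; it is indeed handled by the approximation scheme in~\cite[Section~8]{conjugate}.
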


\begin{proof}
It suffices to prove the proposition for $T=1$, then use the rescaling property~\eqref{eq:rescaling}. To prove the first inequality it is enough to show that the estimation
\begin{align*}
\|R_{j}({H_{1/2}}-I)f\|_{L^1(\mathbb{R}^N,dw)} \lesssim \|f\|_{L^1(\mathbb{R}^N,dw)}+\|{R^{1}_{j}}f\|_{L^1(\mathbb{R}^N,dw)}
\end{align*}
holds (see Proposition~\ref{propo:H_12_I_characterization}). To this end, we write
\begin{equation}
\label{eq:calculation_for_R_j}
\begin{split}
&R_j({H_{1/2}}-I)f(\mathbf{x})=d_k\int_{0}^{\infty}T_j {H_{t}}({H_{1/2}}-I)f(\mathbf{x})\,\frac{dt}{\sqrt{t}}\\&=d_k\int_{0}^{1}T_j {H_{t}}({H_{1/2}}-I)f(\mathbf{x})\,\frac{dt}{\sqrt{t}}+d_k\int_{1}^{\infty}T_j {H_{t}}({H_{1/2}}-I)f(\mathbf{x})\,\frac{dt}{\sqrt{t}}.
\end{split}
\end{equation}
Observe that
\begin{align*}
\left\|\int_{0}^{1}T_{j}{H_t}({H_{1/2}}-I)f\,\frac{dt}{\sqrt{t}}\right\|_{L^1(\mathbb{R}^N,dw)}&=\left\|\int_{0}^{1}({H_{1/2}}-I)T_{j}{H_t}f\,\frac{dt}{\sqrt{t}}\right\|_{L^1(\mathbb{R}^N,dw)} \\ &\leq \|({H_{1/2}}-I){R^{1}_{j}}f\|_{L^1(\mathbb{R}^N,dw)} \lesssim \|{R^1_{j}}f\|_{L^1(\mathbb{R}^N,dw)} .
\end{align*}
To prove $L^1(\mathbb{R}^N,dw)$-boundedness of the second integral in~\eqref{eq:calculation_for_R_j}, it is enough to show that there is $C > 0$ such that for any $\mathbf{y} \in \mathbb{R}^N$ one has

\begin{equation}
\label{eq:difference_derivative}
\begin{split}
&J(\mathbf{y})=\int_{\mathbb{R}^N}\int_{1}^{\infty}\left|T_j\left[{h_{t+1/2}}(\mathbf{x},\mathbf{y})-{h_{t}}(\mathbf{x},\mathbf{y})\right]\right|\frac{dt}{\sqrt{t}}\,dw(\mathbf{x})\\&\leq\int_{\mathbb{R}^N}\int_{1}^{\infty}\int_{t}^{t+1/2}\left|\partial_{s}T_j{h_s}(\mathbf{x},\mathbf{y})\right|\,ds\,\frac{dt}{\sqrt{t}}\,dw(\mathbf{x}) \leq C.
\end{split}
\end{equation}

Observe that by Theorem~\ref{teo:theoremGauss}~\eqref{numitem:space} with $m=1$ we have
\begin{align*}
J(\mathbf{y}) &\lesssim \int_{\mathbb{R}^N}\int_{1}^{\infty}\int_{t}^{t+1/2}s^{-2}\mathcal{G}_{s/c}(\mathbf{x},\mathbf{y})\,ds\,dt\,dw(\mathbf{x}) \\&\lesssim \int_{\mathbb{R}^N}\int_{1}^{\infty}t^{-2}\mathcal{G}_{t/c'}(\mathbf{x},\mathbf{y})\,dt\,dw(\mathbf{x}) \lesssim 1.
\end{align*}

For the converse, assume that $f \in H^{1}_{{\rm{loc}},T}$. Then $R_j({H_{1/2}}-I)f \in L^{1}(\mathbb{R}^N,dw)$. We have
\begin{align*}
R_j^{1}f&=R_j^{1}\left(f-{H_{1/2}}f\right)+R_j^{1}{H_{1/2}}f\\&=R_j(I-{H_{1/2}})f+R_{j}^{1}{H_{1/2}}f-d_k\int_{1}^{\infty}T_j{H_t}(I-{H_{1/2}})f\,\frac{dt}{\sqrt{t}}=I_1+I_2+I_3.
\end{align*}
Note that the $L^1(\mathbb{R}^N,dw)$ norm of $I_3$ is estimated by $\|f\|_{L^1(\mathbb{R}^N,dw)}$ (see~\eqref{eq:difference_derivative}). Further,  the $L^1(\mathbb{R}^N,dw)$ norm of $I_1$ is estimated by $\|f\|_{H^{1}_{{\rm{loc}},1}}$ thanks to Proposition~\ref{propo:H_12_I_characterization} and Theorem~\ref{teo:Riesz}. For $I_2$, we see that
\begin{align*}
R_j^{1}{H_{1/2}}f=d_k\int_{1/2}^{3/2}T_{j}{H_{t}}f\frac{dt}{\sqrt{t}}.
\end{align*}
By Theorem~\ref{teo:theoremGauss}~\eqref{numitem:space} with $m=0$ we have
\begin{align*}
\left|T_{j}{h_{t}}(\mathbf{x},\mathbf{y})\right| \lesssim \frac{1}{\sqrt{t}}\mathcal{G}_{t/c}(\mathbf{x},\mathbf{y}).
\end{align*}
So finally $\|R_j^{1}f\|_{L^1(\mathbb{R}^N,dw)} \lesssim \|f\|_{H^{1}_{{\rm{loc}},1}}+\|f\|_{L^1(\mathbb{R}^N,dw)}$.
\end{proof}

\section{Partition of unity}
In this section, we introduce a covering of $\mathbb{R}^N$ by balls related to $\rho$ (see~\eqref{eq:rho}). We will use these objects to characterize Hardy spaces for the Dunkl harmonic oscillator.

It is easy to prove the following two lemmas.
\begin{lemma}\label{lem:rho_property}
Suppose that $K>0$ is a given constant. There exists a constant $C=C(K)>0$ such that for every $\mathbf{x} \in \mathbb{R}^N$ and $\mathbf{y} \in \mathcal{O}(B(\mathbf{x},K\rho(\mathbf{x})))$ we have
\begin{equation}
C^{-1}\rho(\mathbf{y}) \leq \rho(\mathbf{x}) \leq C\rho(\mathbf{y}).
\end{equation}
\end{lemma}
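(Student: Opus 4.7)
The plan is to reduce the claim to a simple comparison of $1+\|\mathbf{x}\|$ with $1+\|\mathbf{y}\|$, using the fact that the Weyl group $G$ acts by orthogonal transformations and therefore preserves Euclidean norms. Specifically, if $\mathbf{y} \in \mathcal{O}(B(\mathbf{x},K\rho(\mathbf{x})))$, then there exists $g \in G$ with $\|\mathbf{x} - g(\mathbf{y})\| < K\rho(\mathbf{x})$, and since $\|g(\mathbf{y})\| = \|\mathbf{y}\|$ this immediately gives
\begin{equation*}
\bigl|\,\|\mathbf{x}\| - \|\mathbf{y}\|\,\bigr| \leq \|\mathbf{x} - g(\mathbf{y})\| < K\rho(\mathbf{x}) = \frac{K}{1+\|\mathbf{x}\|}.
\end{equation*}

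Next I would deduce the upper comparison $(1+\|\mathbf{y}\|) \lesssim (1+\|\mathbf{x}\|)$ directly, since
\begin{equation*}
1+\|\mathbf{y}\| \leq 1+\|\mathbf{x}\| + \frac{K}{1+\|\mathbf{x}\|} \leq (1+K)(1+\|\mathbf{x}\|).
\end{equation*}

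For the reverse inequality I would split into two cases depending on the size of $1+\|\mathbf{x}\|$. If $1+\|\mathbf{x}\| \geq \sqrt{2K}$, then $\frac{K}{1+\|\mathbf{x}\|} \leq \frac{1}{2}(1+\|\mathbf{x}\|)$, so
\begin{equation*}
1+\|\mathbf{y}\| \geq 1+\|\mathbf{x}\| - \frac{K}{1+\|\mathbf{x}\|} \geq \tfrac{1}{2}(1+\|\mathbf{x}\|).
\end{equation*}
If $1+\|\mathbf{x}\| < \sqrt{2K}$, then both $\|\mathbf{x}\|$ and $\|\mathbf{y}\| \leq \|\mathbf{x}\| + K$ are bounded by constants depending only on $K$, and so the quantities $1+\|\mathbf{x}\|$ and $1+\|\mathbf{y}\|$ lie in a bounded interval bounded away from $0$, yielding the desired comparability with a constant depending only on $K$.

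Taking reciprocals gives $\rho(\mathbf{x}) \asymp \rho(\mathbf{y})$ with a constant $C = C(K)$, which is exactly the claim. There is no substantial obstacle here; the only subtlety worth flagging is the two-case split in the lower bound, which is needed because $\frac{K}{1+\|\mathbf{x}\|}$ may dominate $1+\|\mathbf{x}\|$ when $\mathbf{x}$ is near the origin, and in that regime one must argue instead that everything is bounded and bounded away from zero.
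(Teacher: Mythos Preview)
Your argument is correct. The paper itself omits the proof of this lemma entirely (it simply states ``It is easy to prove the following two lemmas''), so there is nothing to compare against; your reduction via $\|g(\mathbf{y})\|=\|\mathbf{y}\|$ and the two-case split for the lower bound is exactly the kind of routine verification the author had in mind.
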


\begin{lemma}\label{lem:partition_of_unity_invariant}
There exist collections of balls $\{B(\mathbf{y}_m,\rho(\mathbf{y}_m))\}$ and functions $\{\psi_m\}$ such that
\begin{enumerate}[(A)]
\item{the balls $\{B(\mathbf{y}_m,\rho(\mathbf{y}_m))\}$ are pairwise disjoint,}
\item{$\bigcup_{m=1}^{\infty}B(\mathbf{y}_m,2\rho(\mathbf{y}_m))=\mathbb{R}^N$,}
\item{the balls $\{B(\mathbf{y}_m,3\rho(\mathbf{y}_m))\}$ have a finite covering property (i.e. there exists a constant $M>0$ such that any point from $\mathbb{R}^N$ belongs to at most $M$ balls from $\{B(\mathbf{y}_m,3\rho(\mathbf{y}_m))\}$),}\label{numitem:finite_covering}
\item{$\{\psi_m\}$ are $C_{c}^{\infty}(\mathbb{R}^N)$ functions and $\psi_m \geq 0$,}
\item{$\supp \psi_m \subset B(\mathbf{y}_m,3\rho(\mathbf{y}_m))$ for every $m$,}
\item{$\sum_{m=1}^{\infty}\psi_m \equiv 1$,}
\item{$\|\nabla \psi_m \| \lesssim \rho(\mathbf{y}_m)^{-1}$ for every $m$.}
\end{enumerate}
\end{lemma}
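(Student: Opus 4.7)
The plan is a Whitney-type construction adapted to the slowly varying critical radius $\rho$. The only prerequisite beyond Lemma~\ref{lem:rho_property} is the one-Lipschitz bound $|\rho(\mathbf{x})-\rho(\mathbf{y})|\leq \|\mathbf{x}-\mathbf{y}\|$, which is immediate from $\rho(\mathbf{x})=(1+\|\mathbf{x}\|)^{-1}$ and the reverse triangle inequality.

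\textbf{Selecting the centers.} Fix a sufficiently small $\varepsilon>0$ (depending on the slow variation constant in Lemma~\ref{lem:rho_property}) and use Kuratowski--Zorn to choose a maximal family of points $\{\mathbf{y}_m\}$ such that the balls $B(\mathbf{y}_m,\varepsilon\rho(\mathbf{y}_m))$ are pairwise disjoint. Countability is automatic, since $\rho$ is continuous and positive, so $\inf_{B(0,R)}\rho>0$ for every $R$ and the collection is locally finite. For (B), any $\mathbf{x}\in\mathbb{R}^N$ contradicts maximality unless $B(\mathbf{x},\varepsilon\rho(\mathbf{x}))$ meets some $B(\mathbf{y}_m,\varepsilon\rho(\mathbf{y}_m))$; this proximity forces $\rho(\mathbf{x})\asymp\rho(\mathbf{y}_m)$ via slow variation, hence $\mathbf{x}\in B(\mathbf{y}_m,2\rho(\mathbf{y}_m))$ once $\varepsilon$ is small enough. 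A rescaling (absorbing $\varepsilon$) delivers (A) with the stated radius $\rho(\mathbf{y}_m)$. For (C), if $\mathbf{x}\in\bigcap_{m\in I}B(\mathbf{y}_m,3\rho(\mathbf{y}_m))$, slow variation gives $\rho(\mathbf{y}_m)\asymp\rho(\mathbf{x})$ uniformly in $m\in I$, so the disjoint balls $B(\mathbf{y}_m,\varepsilon\rho(\mathbf{y}_m))$ all lie inside a single Euclidean ball of radius $\lesssim\rho(\mathbf{x})$ centered near $\mathbf{x}$, each of Lebesgue volume $\gtrsim\rho(\mathbf{x})^N$; a volume count in $(\mathbb{R}^N,d\mathbf{x})$ then bounds $|I|$ by a universal constant.

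\textbf{The smooth partition.} Fix once and for all a bump $\phi\in C_c^\infty(\mathbb{R}^N)$ with $0\leq\phi\leq 1$, $\phi\equiv 1$ on $B(0,2)$, $\supp\phi\subset B(0,3)$, and define $\phi_m(\mathbf{x})=\phi((\mathbf{x}-\mathbf{y}_m)/\rho(\mathbf{y}_m))$, so that $\supp\phi_m\subset B(\mathbf{y}_m,3\rho(\mathbf{y}_m))$ and $\|\nabla\phi_m\|\lesssim\rho(\mathbf{y}_m)^{-1}$. Property (B) gives $\Phi:=\sum_m\phi_m\geq 1$ pointwise; property (C) makes the sum locally finite (hence $C^\infty$) and uniformly bounded above. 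Setting $\psi_m:=\phi_m/\Phi$ yields (D)--(F) immediately. For (G), the product rule gives $\nabla\psi_m=\nabla\phi_m/\Phi-\phi_m\nabla\Phi/\Phi^2$; on $\supp\phi_m$ the finitely many $\phi_{m'}$ with overlapping support all satisfy $\rho(\mathbf{y}_{m'})\asymp\rho(\mathbf{y}_m)$ by slow variation, so $\|\nabla\Phi\|\lesssim\rho(\mathbf{y}_m)^{-1}$ and the desired bound follows.

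\textbf{The main obstacle} is purely bookkeeping: aligning the numerical radii $\rho,2\rho,3\rho$ prescribed in the statement with the natural radius $\varepsilon\rho$ coming out of a maximal disjoint argument. This is handled once at the outset by choosing $\varepsilon$ small enough (as a function of the constant in Lemma~\ref{lem:rho_property}) so that (A), (B), and (C) hold simultaneously with the stated factors, after which every subsequent step is routine.
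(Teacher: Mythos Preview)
The paper gives no proof (it merely calls the lemma ``easy''), so there is no approach to compare against; your Whitney-type construction is the standard argument, and the substantive steps --- covering from maximality plus slow variation, bounded overlap via a Lebesgue-volume count, and the smooth partition obtained by normalising rescaled bumps --- are all correct.

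The one step that does not work is the sentence ``a rescaling (absorbing $\varepsilon$) delivers (A) with the stated radius $\rho(\mathbf{y}_m)$.'' Choosing a maximal family with the \emph{small} balls $B(\mathbf{y}_m,\varepsilon\rho(\mathbf{y}_m))$ disjoint packs the centres densely at scale $\varepsilon\rho$, so the unit-scale balls $B(\mathbf{y}_m,\rho(\mathbf{y}_m))$ will overlap heavily; no rescaling turns disjointness at radius $\varepsilon\rho$ into disjointness at radius $\rho$. Conversely, if you take $\varepsilon=1$ so that (A) holds by construction, then (B) need not hold with factor~$2$: for instance in $\mathbb{R}^1$ one can complete $\{\pm 1.2\}$ to a maximal family of pairwise disjoint $\rho$-balls, yet the origin lies outside every $B(y_m,2\rho(y_m))$. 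So there is no choice of $\varepsilon$ that produces the exact constants $1,2,3$ simultaneously, and the claim in your final paragraph to the contrary is not right. What the maximal argument does yield is the lemma with some constants $c_1<c_2<c_3$ in place of $1,2,3$; that is all the paper ever needs (later arguments pass freely to radii $6\rho(\mathbf{y}_m)$ and $12\rho(\mathbf{y}_m)$, so the specific values are cosmetic). Either restate the lemma with flexible constants, or note this harmless discrepancy explicitly.
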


\begin{lemma}\label{lem:psi_m_cancellation}
There is $c>0$ such that for every $m \in \mathbb{N}$, $t>0$, $\mathbf{x},\mathbf{y} \in \mathbb{R}^N$ such that
\begin{align*}
\mathbf{x} \in \mathcal{O}(B(\mathbf{y}_m,3\rho(\mathbf{y}_m))) \ \ \text{ or } \ \ \mathbf{y} \in \mathcal{O}(B(\mathbf{y}_m,3\rho(\mathbf{y}_m)))
\end{align*}
we have
\begin{equation}\label{eq:cancellation_first}
|\psi_m(\mathbf{x})-\psi_m(\mathbf{y})|\left(\frac{\|\mathbf{x}-\mathbf{y}\|}{\sqrt{t}}+1\right)^{-1}\mathcal{G}_{t/c}(\mathbf{x},\mathbf{y}) \lesssim \frac{\sqrt{t}}{\rho(\mathbf{y})} \mathcal{G}_{t/c}(\mathbf{x},\mathbf{y}).
\end{equation}
In particular,
\begin{equation}\label{eq:cancellation_second}
|\psi_m(\mathbf{x})-\psi_m(\mathbf{y})|{h_t}(\mathbf{x},\mathbf{y}) \lesssim \frac{\sqrt{t}}{\rho(\mathbf{y})} \mathcal{G}_{t/c}(\mathbf{x},\mathbf{y}).
\end{equation}
\end{lemma}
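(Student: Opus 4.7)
The plan is to first establish the purely real-variable inequality
\[
|\psi_m(\mathbf{x})-\psi_m(\mathbf{y})|\,\rho(\mathbf{y}) \lesssim \|\mathbf{x}-\mathbf{y}\| \qquad (\star)
\]
for every $\mathbf{x},\mathbf{y}$ satisfying the hypothesis of the lemma, and then derive \eqref{eq:cancellation_first} from $(\star)$ by a trivial manipulation. Indeed, if $\|\mathbf{x}-\mathbf{y}\|\leq\sqrt{t}$ then $(\star)$ gives $|\psi_m(\mathbf{x})-\psi_m(\mathbf{y})|(1+\|\mathbf{x}-\mathbf{y}\|/\sqrt{t})^{-1}\leq\|\mathbf{x}-\mathbf{y}\|/\rho(\mathbf{y})\leq\sqrt{t}/\rho(\mathbf{y})$, while if $\|\mathbf{x}-\mathbf{y}\|>\sqrt{t}$ then $(1+\|\mathbf{x}-\mathbf{y}\|/\sqrt{t})^{-1}\leq\sqrt{t}/\|\mathbf{x}-\mathbf{y}\|$ and $(\star)$ again produces $\sqrt{t}/\rho(\mathbf{y})$. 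The Gaussian factor $\mathcal{G}_{t/c}(\mathbf{x},\mathbf{y})$ on the right is carried through unchanged.

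To prove $(\star)$ I split on which point satisfies the orbit hypothesis, using that $\rho$ is $G$-invariant together with Lemma~\ref{lem:rho_property} to conclude $\rho(\mathbf{y}_m)\asymp\rho(\mathbf{x})$ or $\rho(\mathbf{y}_m)\asymp\rho(\mathbf{y})$. The easy case is $\mathbf{y}\in\mathcal{O}(B(\mathbf{y}_m,3\rho(\mathbf{y}_m)))$: here $\rho(\mathbf{y})\asymp\rho(\mathbf{y}_m)$, and the mean value theorem combined with the gradient bound $\|\nabla\psi_m\|\lesssim\rho(\mathbf{y}_m)^{-1}$ from Lemma~\ref{lem:partition_of_unity_invariant} yields $|\psi_m(\mathbf{x})-\psi_m(\mathbf{y})|\lesssim\|\mathbf{x}-\mathbf{y}\|/\rho(\mathbf{y}_m)\asymp\|\mathbf{x}-\mathbf{y}\|/\rho(\mathbf{y})$, which is $(\star)$.

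The harder case is when only $\mathbf{x}\in\mathcal{O}(B(\mathbf{y}_m,3\rho(\mathbf{y}_m)))$, so only $\rho(\mathbf{y}_m)\asymp\rho(\mathbf{x})$ is available. I split once more by comparing $\|\mathbf{x}-\mathbf{y}\|$ to $\rho(\mathbf{x})$. If $\|\mathbf{x}-\mathbf{y}\|\leq\rho(\mathbf{x})/2$, then using $\rho(\mathbf{x})\leq 1$ one has $1/\rho(\mathbf{y})=1+\|\mathbf{y}\|\geq 1+\|\mathbf{x}\|-\|\mathbf{x}-\mathbf{y}\|\geq 1/(2\rho(\mathbf{x}))$, so $\rho(\mathbf{y})\asymp\rho(\mathbf{x})\asymp\rho(\mathbf{y}_m)$ and the previous mean-value argument applies verbatim. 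If $\|\mathbf{x}-\mathbf{y}\|>\rho(\mathbf{x})/2$, I use the trivial estimate $|\psi_m(\mathbf{x})-\psi_m(\mathbf{y})|\leq 2$ and must show $\rho(\mathbf{y})\lesssim\|\mathbf{x}-\mathbf{y}\|$; a further dichotomy handles this: if $\|\mathbf{x}-\mathbf{y}\|\leq 1/(2\rho(\mathbf{x}))$ then $1+\|\mathbf{y}\|\geq 1/\rho(\mathbf{x})-\|\mathbf{x}-\mathbf{y}\|\geq 1/(2\rho(\mathbf{x}))$ gives $\rho(\mathbf{y})\leq 2\rho(\mathbf{x})<4\|\mathbf{x}-\mathbf{y}\|$, whereas if $\|\mathbf{x}-\mathbf{y}\|>1/(2\rho(\mathbf{x}))\geq 1/2$ then $\rho(\mathbf{y})\leq 1<2\|\mathbf{x}-\mathbf{y}\|$.

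The second estimate \eqref{eq:cancellation_second} follows at once by combining \eqref{eq:cancellation_first} with the pointwise bound ${h_t}(\mathbf{x},\mathbf{y})\lesssim(\|\mathbf{x}-\mathbf{y}\|/\sqrt{t}+1)^{-2}\mathcal{G}_{t/c}(\mathbf{x},\mathbf{y})\leq(\|\mathbf{x}-\mathbf{y}\|/\sqrt{t}+1)^{-1}\mathcal{G}_{t/c}(\mathbf{x},\mathbf{y})$ supplied by Theorem~\ref{teo:theoremGauss}(A). The real obstacle is the last sub-case above: when $\mathbf{y}$ lies far outside the relevant orbit, the gradient bound is insufficient and one must couple the trivial bound $|\psi_m(\mathbf{x})-\psi_m(\mathbf{y})|\leq 2$ with the elementary comparison of $\rho(\mathbf{y})$ against $\|\mathbf{x}-\mathbf{y}\|$, which is what makes the proof go through.
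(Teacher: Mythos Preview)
Your proof is correct. The reduction to the scale-free inequality $(\star)$ is a clean way to isolate the real-variable content, and your case analysis for $(\star)$ is watertight.

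The paper's argument is organised differently and is shorter. Instead of splitting on which of $\mathbf{x}$ or $\mathbf{y}$ lies in the orbit, it splits once on whether $\dz(\mathbf{x},\mathbf{y})\geq\rho(\mathbf{y})$ or $\dz(\mathbf{x},\mathbf{y})<\rho(\mathbf{y})$. In the first case $\|\mathbf{x}-\mathbf{y}\|\geq\dz(\mathbf{x},\mathbf{y})\geq\rho(\mathbf{y})$, so the factor $(\|\mathbf{x}-\mathbf{y}\|/\sqrt{t}+1)^{-1}\leq\sqrt{t}/\rho(\mathbf{y})$ together with the trivial bound $|\psi_m(\mathbf{x})-\psi_m(\mathbf{y})|\leq 2$ finishes the estimate. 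In the second case $\mathbf{x}\in\mathcal{O}(B(\mathbf{y},\rho(\mathbf{y})))$, so Lemma~\ref{lem:rho_property} gives $\rho(\mathbf{x})\asymp\rho(\mathbf{y})$; combined with the hypothesis on $\mathbf{x}$ or $\mathbf{y}$ this yields $\rho(\mathbf{y})\lesssim\rho(\mathbf{y}_m)$ in one stroke, and the mean value theorem concludes. The use of the orbit distance $\dz$ lets the paper absorb your two harder sub-cases into a single line, at the cost of invoking Lemma~\ref{lem:rho_property} in its full orbit form. Your argument, by contrast, is purely Euclidean after the first appeal to Lemma~\ref{lem:rho_property}, which makes it slightly longer but perhaps more transparent.
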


\begin{proof}
\textbf{Case 1.} $\dz(\mathbf{x},\mathbf{y}) \geq \rho(\mathbf{y})$. Then $\|\mathbf{x}-\mathbf{y}\| \geq \rho(\mathbf{y})$ and~\eqref{eq:cancellation_first} easily follows.
\\
\textbf{Case 2.} $\dz(\mathbf{x},\mathbf{y}) < \rho(\mathbf{y})$. By Lemma~\ref{lem:rho_property} we have $\rho(\mathbf{y}) \lesssim \rho(\mathbf{x}) \lesssim \rho(\mathbf{y}_m)$, so by the mean value theorem,
\begin{align*}
|\psi_m(\mathbf{x})-\psi_m(\mathbf{y})|\lesssim \frac{\|\mathbf{x}-\mathbf{y}\|}{\rho(\mathbf{y}_m)}.
\end{align*}
If $\|\mathbf{x}-\mathbf{y}\|=0$, then the claim is obvious. If $\|\mathbf{x}-\mathbf{y}\| \neq 0$, then
\begin{align*}
|\psi_m(\mathbf{x})-\psi_m(\mathbf{y})|\left(\frac{\|\mathbf{x}-\mathbf{y}\|}{\sqrt{t}}+1\right)^{-1}\mathcal{G}_{t/c}(\mathbf{x},\mathbf{y}) \lesssim \frac{\sqrt{t}}{\|\mathbf{x}-\mathbf{y}\|} \frac{\|\mathbf{x}-\mathbf{y}\|}{\rho(\mathbf{y}_m)}\mathcal{G}_{t/c}(\mathbf{x},\mathbf{y}) \lesssim \frac{\sqrt{t}}{\rho(\mathbf{y})}\mathcal{G}_{t/c}(\mathbf{x},\mathbf{y}).
\end{align*}
The estimation~\eqref{eq:cancellation_second} is a direct consequence of~\eqref{eq:cancellation_first} and Theorem~\ref{teo:theoremGauss}~\eqref{numitem:Gauss}.
\end{proof}

\section{Hardy spaces for the Dunkl harmonic oscillator\\ proofs of atomic decomposition and Riesz transform characterization}

\subsection{Proof of  Theorem~\ref{teo:atomic_Dunkl_Hermite}}
The kernels $k_t(\mathbf{x},\mathbf{y})$ and ${h_{t_1}}(\mathbf{x},\mathbf{y})$  of the  Dunkl and Dunkl--Hermite semigroups  have locally  similar behavior for small $t$.  On the other hand  the kernel $k_t(\mathbf x,\mathbf y)$ has faster decay for large $t$ (see \eqref{eq:connection_Dunkl_Dunkl_Hermite}). Analysis of the kernels combined with characterizations of local Hardy spaces allow us to prove Theorem~\ref{teo:atomic_Dunkl_Hermite}.

In the remaining part of the paper $t,t_1>0$ are always related by
 $$ t_1=\frac{1}{2}\sinh(2t).$$
 The lemma below is a list of basic properties of $t$ and $t_1$, the proof is omitted.
\begin{lemma}\label{lem:t_t_1_estimations} For $t>0$ we have
\begin{enumerate}[(A)]
\item{$t \leq t_1$ for $t>0$ and $t_1 \leq 2t$ for $0<t<1$,}\label{numitem:t<t_1}
\item{$0 \leq t_1-t \lesssim t^3$ for $0<t<1$,}\label{numitem:t^3}
\item{$\frac{1}{4t_1}-\frac{1}{2}\coth(2t)+\frac{t}{4}<0$ for $0<t<1$ and $\left|\frac{1}{4t_1}-\frac{1}{2}\coth(2t)\right| \lesssim t$ for $t>0$,}\label{numitem:coth_t}
%\item{$\left|\frac{1}{4t_1}-\frac{1}{2}\coth(2t)\right| \lesssim t$ for $0<t<1$,}\label{numitem:coth<t}
\item{$\frac{1}{4t_1}-\frac{1}{2}\coth(2t) \leq -\frac{1}{4}$ for $t \geq 1$,}\label{numitem:coth>C}
\item{$t_1^{-1} \lesssim e^{-2t}$ for $t \geq 1$.}\label{numitem:ext-t}

\end{enumerate}
\end{lemma}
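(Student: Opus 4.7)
The plan is to reduce everything to a single closed-form identity. Rewriting the combination appearing in (C) and (D) via $1-\cosh(2t)=-2\sinh^{2}t$ and $\sinh(2t)=2\sinh t\cosh t$ gives
\[
\frac{1}{4t_1}-\frac{1}{2}\coth(2t)
=\frac{1-\cosh(2t)}{2\sinh(2t)}
=-\frac{\tanh t}{2}.
\]
Once this is in hand, parts (C), (D) become statements about $\tanh$, while (A), (B) follow from the nonnegative Taylor series of $\sinh$, and (E) from the asymptotics of $\sinh(2t)$ at infinity.

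For (A), I would use $\sinh(2t)=\sum_{n\geq 0}(2t)^{2n+1}/(2n+1)!$, which has nonnegative coefficients, so $\sinh(2t)\geq 2t$ and hence $t_1\geq t$ for $t>0$. For the upper bound $t_1\leq 2t$ on $(0,1)$, I would note that $\sinh(2t)/(2t)$ is increasing in $t$, so on $(0,1)$ it is bounded by $\sinh(2)/2<2$. For (B), the same series yields $t_1-t=\tfrac{2}{3}t^{3}+\sum_{n\geq 2}(2t)^{2n+1}/(2(2n+1)!)$, with every term nonnegative; factoring out $t^{3}$ from the tail and bounding the remaining convergent sum on $(0,1)$ gives $t_1-t\lesssim t^{3}$.

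For (C), the identity immediately yields $\bigl|\tfrac{1}{4t_1}-\tfrac{1}{2}\coth(2t)\bigr|=\tfrac{\tanh t}{2}\leq \tfrac{t}{2}$ for all $t>0$ from the standard bound $\tanh t\leq t$. The strict inequality on $(0,1)$ reduces to showing $\tanh t>t/2$ on that interval; I would analyze $f(t)=\tanh t-t/2$ via $f(0)=0$, $f'(t)=\operatorname{sech}^{2}t-\tfrac{1}{2}$, and $f''(t)=-2\operatorname{sech}^{2}t\,\tanh t<0$. Thus $f'$ decreases from $1/2$, changing sign at $t^{*}=\cosh^{-1}\!\sqrt{2}\approx 0.881$, so $f$ increases on $(0,t^{*})$ and decreases on $(t^{*},1)$. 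Since $f(1)=\tanh 1-1/2>0$, unimodality forces $f>0$ throughout $(0,1)$. For (D), the identity turns the claim into $\tanh t\geq 1/2$ for $t\geq 1$, which holds because $\tanh$ is increasing and $\tanh 1=(e^{2}-1)/(e^{2}+1)>1/2$. Finally, (E) is obtained from $t_1=(e^{2t}-e^{-2t})/4\geq (1-e^{-4})e^{2t}/4$ valid for $t\geq 1$, so $t_1^{-1}\leq \frac{4}{1-e^{-4}}e^{-2t}$.

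No serious obstacle arises here; the one mildly delicate step is the strict inequality in (C), where one must rule out that $f(t)=\tanh t-t/2$, after its interior maximum at $t^{*}$, drops back to zero before $t=1$. This is handled by the single sign check $f(1)>0$ combined with unimodality of $f$ on $(0,\infty)$.
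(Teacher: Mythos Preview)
Your argument is correct. The paper itself omits the proof entirely (``the proof is omitted''), so there is no approach to compare against; your reduction of parts (C) and (D) to the closed-form identity
\[
\frac{1}{4t_1}-\frac{1}{2}\coth(2t)=-\frac{\tanh t}{2}
\]
is clean and makes all five items routine. The only step requiring any care is the strict inequality in (C), and your unimodality argument for $f(t)=\tanh t-t/2$ together with the endpoint check $f(1)>0$ handles it properly.
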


\begin{lemma}\label{lem:H_t_H_t_1_difference}
There exist constants $C,c>0$ such that for $0<t<1$ we have
\begin{enumerate}[(A)]
\item{$|{h_t}(\mathbf{x},\mathbf{y})-{h_{t_1}}(\mathbf{x},\mathbf{y})| \leq Ct^2 \mathcal{G}_{t/c}(\mathbf{x},\mathbf{y})$,}\label{numitem:H_t_H_t1_difference}
\item{$|{h_{t_1}}(\mathbf{x},\mathbf{y})-k_t(\mathbf{x},\mathbf{y})| \leq Ct(\|\mathbf{x}\|^2+\|\mathbf{y}\|^2)\min\left\{\mathcal{G}_{t/c}(\mathbf{x},\mathbf{y}),{h_{t_1}}(\mathbf{x},\mathbf{y})\right\}$.}\label{numitem:Dunkl_Dunkl_Hermite_difference}
\end{enumerate}
\end{lemma}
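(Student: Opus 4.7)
The plan is to obtain both estimates as essentially one-line consequences of Theorem~\ref{teo:theoremGauss} combined with the arithmetic properties recorded in Lemma~\ref{lem:t_t_1_estimations}.

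For part (A), since $t \leq t_1$ by Lemma~\ref{lem:t_t_1_estimations}(A), I would write
\[
h_{t_1}(\mathbf{x},\mathbf{y}) - h_t(\mathbf{x},\mathbf{y}) = \int_{t}^{t_1} \partial_s h_s(\mathbf{x},\mathbf{y})\,ds
\]
and apply Theorem~\ref{teo:theoremGauss}(B) with $m=1$, which bounds the integrand pointwise by $Cs^{-1}\mathcal{G}_{s/c}(\mathbf{x},\mathbf{y})$. On the interval $[t,t_1]\subset[t,2t]$ (Lemma~\ref{lem:t_t_1_estimations}(A), $0<t<1$) one has $s^{-1}\leq t^{-1}$, and the doubling property of $w$ together with the monotonicity of $e^{-\|\cdot\|^2 c/s}$ in $s$ yields $\mathcal{G}_{s/c}(\mathbf{x},\mathbf{y}) \lesssim \mathcal{G}_{t/c'}(\mathbf{x},\mathbf{y})$ for a (possibly smaller) constant $c'$. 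Combining with $t_1 - t \lesssim t^3$ from Lemma~\ref{lem:t_t_1_estimations}(B) dominates the integral by $C t^{-1}\cdot t^3\cdot \mathcal{G}_{t/c'}(\mathbf{x},\mathbf{y}) = Ct^2\mathcal{G}_{t/c'}(\mathbf{x},\mathbf{y})$, as required.

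For part (B), I would start from the connection formula~\eqref{eq:connection_Dunkl_Dunkl_Hermite}, which reads
\[
k_t(\mathbf{x},\mathbf{y}) = e^{\beta(t)(\|\mathbf{x}\|^2+\|\mathbf{y}\|^2)}\, h_{t_1}(\mathbf{x},\mathbf{y}),\qquad \beta(t) := \frac{1}{4t_1} - \frac{1}{2}\coth(2t).
\]
By Lemma~\ref{lem:t_t_1_estimations}(C) we have $\beta(t)\leq 0$ for $0<t<1$ and $|\beta(t)|\lesssim t$. Since $0 \leq 1 - e^{-a}\leq a$ for $a\geq 0$, this gives
\[
|h_{t_1}(\mathbf{x},\mathbf{y}) - k_t(\mathbf{x},\mathbf{y})| = h_{t_1}(\mathbf{x},\mathbf{y})\bigl(1 - e^{\beta(t)(\|\mathbf{x}\|^2+\|\mathbf{y}\|^2)}\bigr) \lesssim t(\|\mathbf{x}\|^2+\|\mathbf{y}\|^2)\, h_{t_1}(\mathbf{x},\mathbf{y}),
\]
which is the bound with $h_{t_1}$ on the right. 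To obtain the companion bound with $\mathcal{G}_{t/c}$, I would apply Theorem~\ref{teo:theoremGauss}(A) to dominate $h_{t_1}(\mathbf{x},\mathbf{y}) \lesssim \mathcal{G}_{t_1/c}(\mathbf{x},\mathbf{y})$ and then invoke the comparison $\mathcal{G}_{t_1/c} \lesssim \mathcal{G}_{t/c'}$ from the proof of (A) once more, since $t_1 \sim t$ on $(0,1)$. Taking the minimum of the two bounds completes (B).

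I do not anticipate any serious obstacle. The only point that requires a moment's care is the comparison $\mathcal{G}_{s/c}(\mathbf{x},\mathbf{y}) \lesssim \mathcal{G}_{t/c'}(\mathbf{x},\mathbf{y})$ for $s$ comparable to $t$: enlarging $s$ shrinks the prefactor $1/w(B(\mathbf{x},\sqrt{s/c}))$ (controlled by doubling) while widening the Gaussian tail $e^{-\|\cdot\|^2 c/s}$ (absorbed by passing from $c$ to a smaller $c'$). Both effects contribute only harmless multiplicative constants.
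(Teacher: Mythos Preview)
Your proposal is correct and follows essentially the same route as the paper: for (A) the paper also writes $h_t-h_{t_1}$ as an integral of $\partial_s h_s$ over $[t,t_1]$ and invokes Theorem~\ref{teo:theoremGauss}\eqref{numitem:time} together with Lemma~\ref{lem:t_t_1_estimations}\eqref{numitem:t^3}, and for (B) it uses the connection formula~\eqref{eq:connection_Dunkl_Dunkl_Hermite} and Lemma~\ref{lem:t_t_1_estimations}\eqref{numitem:coth_t} exactly as you do. The paper is terser and leaves the routine comparison $\mathcal{G}_{s/c}\lesssim\mathcal{G}_{t/c'}$ for $s\in[t,2t]$ implicit, but your added justification of that step is fine.
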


\begin{proof}
For~\eqref{numitem:H_t_H_t1_difference}, we have
\begin{align*}
&\left|{h_t}(\mathbf{x},\mathbf{y})-{h_{t_1}}(\mathbf{x},\mathbf{y})\right|=\left|\int_{t_1}^{t}\partial_s {h_s}(\mathbf{x},\mathbf{y}) \,ds\right| \leq |t-t_1|\sup_{s \in [t,t_1]}\left|\partial_s h_s(\mathbf{x},\mathbf{y})\right|,
\end{align*}
so using Theorem~\ref{teo:theoremGauss}~\eqref{numitem:time} and Lemma~\ref{lem:t_t_1_estimations}~\eqref{numitem:t^3} we are done.

For~\eqref{numitem:Dunkl_Dunkl_Hermite_difference}, we recall that
\begin{align*}
k_t(\mathbf{x},\mathbf{y})-{h_{t_1}}(\mathbf{x},\mathbf{y})=\left[\exp\left(\left(\frac{1}{4t_1}-\frac{1}{2}\coth(2t)\right)(\|\mathbf{x}\|^2+\|\mathbf{y}\|^2)\right)-1\right]{h_{t_1}}(\mathbf{x},\mathbf{y}).
\end{align*}
The proof is completed by Lemma~\ref{lem:t_t_1_estimations}~\eqref{numitem:coth_t} .
\end{proof}

\begin{proposition}\label{propo:H_t_H_t_1_difference}
There exists a constant $C>0$ such that for any $\mathbf{y}_0 \in \mathbb{R}^N$ and for any $f \in L^{1}(B(\mathbf{y}_0,3\rho(\mathbf{y}_0)),dw)$ we have
\begin{equation}
\left\|\sup_{0<t \leq \rho(\mathbf{y}_0)^2}\left|{H_t}f(\cdot)-K_tf(\cdot)\right|\right\|_{L^1(\mathbb{R}^N,dw)} \leq C\|f\|_{L^1(\mathbb{R}^N,dw)}.
\end{equation}
\end{proposition}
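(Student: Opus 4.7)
Set $T:=\rho(\mathbf{y}_0)\leq 1$. Since
\begin{equation*}
\sup_{0<t\leq T^2}|H_tf(\mathbf{x})-K_tf(\mathbf{x})|\leq \int_{B(\mathbf{y}_0,3T)}\Bigl(\sup_{0<t\leq T^2}|h_t(\mathbf{x},\mathbf{y})-k_t(\mathbf{x},\mathbf{y})|\Bigr)\,|f(\mathbf{y})|\,dw(\mathbf{y}),
\end{equation*}
Tonelli's theorem reduces the proof to the uniform kernel estimate
\begin{equation}\label{eq:plan-central}
\int_{\mathbb{R}^N}\sup_{0<t\leq T^2}|h_t(\mathbf{x},\mathbf{y})-k_t(\mathbf{x},\mathbf{y})|\,dw(\mathbf{x})\leq C\qquad\text{for all }\mathbf{y}\in B(\mathbf{y}_0,3T).
\end{equation}
My plan is to prove \eqref{eq:plan-central} by pivoting through $h_{t_1}(\mathbf{x},\mathbf{y})$ and applying Lemma~\ref{lem:H_t_H_t_1_difference}, which yields $|h_t-k_t|\leq C t^2\mathcal{G}_{t/c}(\mathbf{x},\mathbf{y})+C t(\|\mathbf{x}\|^2+\|\mathbf{y}\|^2)\min\{\mathcal{G}_{t/c}(\mathbf{x},\mathbf{y}),h_{t_1}(\mathbf{x},\mathbf{y})\}$.

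For the $t^2\mathcal{G}_{t/c}$-term, I would decompose $(0,T^2]=\bigsqcup_{k\geq 0}I_k$ dyadically with $I_k=(2^{-k-1}T^2,2^{-k}T^2]$. On each $I_k$ one has $t\asymp 2^{-k}T^2$, and a standard comparison gives $\sup_{t\in I_k}\mathcal{G}_{t/c}(\mathbf{x},\mathbf{y})\leq C\mathcal{G}_{2^{-k}T^2/c'}(\mathbf{x},\mathbf{y})$ (after adjusting the Gaussian constant); integrating in $\mathbf{x}$ via~\eqref{eq:integral_of_kernel} yields
\begin{equation*}
\int_{\mathbb{R}^N}\sup_{0<t\leq T^2}t^2\mathcal{G}_{t/c}(\mathbf{x},\mathbf{y})\,dw(\mathbf{x})\leq\sum_{k\geq 0}(2^{-k}T^2)^2\cdot C\lesssim T^4\leq 1.
\end{equation*}

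For the second term, the hypothesis $\mathbf{y}\in B(\mathbf{y}_0,3T)$ forces $\|\mathbf{y}\|\lesssim 1/T$, hence $t\|\mathbf{y}\|^2\lesssim 1$. I split $\mathbf{x}$ according to $\|\mathbf{x}\|\leq 2\|\mathbf{y}_0\|+1$ (the near region, in which $\|\mathbf{x}\|^2\lesssim 1/T^2$) or $\|\mathbf{x}\|>2\|\mathbf{y}_0\|+1$ (the far region). In the near region, choosing $h_{t_1}$ in the minimum, the integrand is bounded by $(C/T^2)\,t\,h_{t_1}(\mathbf{x},\mathbf{y})$, and the same dyadic argument applied via $h_{t_1}\leq C\mathcal{G}_{t_1/c}$ gives $\int\sup_{0<t\leq T^2}t\,h_{t_1}\,dw(\mathbf{x})\lesssim T^2$, for a total contribution of $O(1)$. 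In the far region $\|\mathbf{y}\|\leq \|\mathbf{x}\|/2$ forces $\dz(\mathbf{x},\mathbf{y})\geq\|\mathbf{x}\|/2$, so the elementary inequality $ue^{-u/2}\lesssim 1$ applied with $u=c\,\dz(\mathbf{x},\mathbf{y})^2/t$ absorbs the factor $t\|\mathbf{x}\|^2$ into an extra Gaussian, reducing matters to the already-handled $t^2\mathcal{G}_{t/c'}$ estimate.

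The delicate point is the exchange of the pointwise supremum in $t$ with the $\mathbf{x}$-integration. Both $\sup_{0<t\leq T^2}t^2\mathcal{G}_{t/c}(\mathbf{x},\mathbf{y})$ and $\sup_{0<t\leq T^2}t\,h_{t_1}(\mathbf{x},\mathbf{y})$ may be pointwise infinite as $\mathbf{x}\to\mathbf{y}$ when the homogeneous dimension $\mathbf{N}$ is large, so $L^1(dw)$-integrability cannot come from any pointwise bound on the kernels; it is recovered through the dyadic decomposition of $(0,T^2]$, in which the polynomial factors $t^2$ and $t$ precisely balance the constant cost per scale coming from~\eqref{eq:integral_of_kernel}.
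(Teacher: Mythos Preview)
Your approach is correct in spirit and genuinely different from the paper's. One small quantitative slip: with the threshold $\|\mathbf{x}\|>2\|\mathbf{y}_0\|+1$ you do not always get $\|\mathbf{y}\|\le\|\mathbf{x}\|/2$ (take $\|\mathbf{y}_0\|$ small, so $\|\mathbf{y}\|$ can be close to $3$ while $\|\mathbf{x}\|$ is just above $1$). Replacing the threshold by, say, $2\|\mathbf{y}_0\|+7\ge 2(\|\mathbf{y}_0\|+3)$ fixes this immediately and keeps $\|\mathbf{x}\|^2\lesssim 1/T^2$ in the near region.

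The paper argues differently. It splits the $\mathbf{x}$-integral along the orbit ball $\mathcal{O}(B(\mathbf{y}_0,6\rho(\mathbf{y}_0)))$ rather than a norm ball. In the far piece it does not use Lemma~\ref{lem:H_t_H_t_1_difference} at all: it simply bounds $|h_t-k_t|\le h_t+k_t\lesssim\mathcal{G}_{t/c}$ and invokes Lemma~\ref{lem:substitute_t_by_dz} (with $\alpha=0$) to absorb the supremum in $t$ pointwise, using $\dz(\mathbf{x},\mathbf{y})\ge\rho(\mathbf{y}_0)$. In the near piece, where $\|\mathbf{x}\|^2+\|\mathbf{y}\|^2\lesssim\rho(\mathbf{y}_0)^{-2}$, it bounds $t\rho^{-2}\mathcal{G}_{t/c}\le t^{1/2}\rho^{-1}\mathcal{G}_{t/c}$, applies Lemma~\ref{lem:substitute_t_by_dz} with $\alpha=-1/2$ to obtain the pointwise bound $\rho(\mathbf{y}_0)^{-1}\dz(\mathbf{x},\mathbf{y})\,w(B(\mathbf{y},\dz(\mathbf{x},\mathbf{y})))^{-1}$, and then integrates via Lemma~\ref{lem:summation}. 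Thus the paper never decomposes $(0,T^2]$ dyadically; it converts the $\sup_t$ into a pointwise $\dz$-bound and integrates that.

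What each buys: your dyadic-in-$t$ argument is more self-contained (it needs only the uniform bound~\eqref{eq:integral_of_kernel}) and handles the potentially non-integrable pointwise supremum directly by trading the power of $t$ for geometric summability over scales. The paper's route is shorter once Lemmas~\ref{lem:substitute_t_by_dz} and~\ref{lem:summation} are in hand, and the same pair of lemmas is reused verbatim throughout the rest of the paper, so there is economy in setting them up once.
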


\begin{proof}
It is enough to prove that
\begin{align*}
& \int_{\mathbb{R}^N} \sup_{0<t \leq \rho(\mathbf{y}_0)^2} |{h_t}(\mathbf{x},\mathbf{y})-k_t(\mathbf{x},\mathbf{y})|\,dw(\mathbf{x}) \leq C
\end{align*}
for $\mathbf{y} \in B(\mathbf{y}_0,3\rho(\mathbf{y}_0))$ with $C>0$ which does not depend on $\mathbf{y}_0$ and $\mathbf{y}$. We split the integral into two parts: over $(\mathcal{O}(B(\mathbf{y}_0,6\rho(\mathbf{y}_0))))^{c}$ and over $\mathcal{O}(B(\mathbf{y}_0,6\rho(\mathbf{y}_0)))$. For the first one, it suffices to prove the boundedness of the integrals of ${h_t}(\mathbf{x},\mathbf{y})$ and $k_t(\mathbf{x},\mathbf{y})$ separately. Each of them is bounded by $\mathcal{G}_{t/c}(\mathbf{x},\mathbf{y})$. Note that if $\mathbf{x} \not\in \mathcal{O}(B(\mathbf{y}_0,6\rho(\mathbf{y}_0)))$, then $\dz(\mathbf{x},\mathbf{y})\geq \rho(\mathbf{y}_0)$. Therefore, by Theorem~\ref{teo:theoremGauss}~\eqref{numitem:Gauss} and Lemma~\ref{lem:substitute_t_by_dz}, we are reduced to estimate
\begin{align*}
\int_{\mathbb{R}^N}\frac{1}{w(B(\mathbf{y}_0,\rho(\mathbf{y}_0)))}\exp\left(-c\rho(\mathbf{y}_0)^{-2}\dz(\mathbf{x},\mathbf{y})^2\right)\,dw(\mathbf{x}).
\end{align*}
The integral above is bounded by constant, which is independent of $\mathbf{y}$ and $\mathbf{y}_0$ (see~\eqref{eq:integral_of_kernel}).
For the integral over $\mathcal{O}(B(\mathbf{y}_0,6\rho(\mathbf{y}_0)))$, we have $\|\mathbf{x}\|^2+\|\mathbf{y}\|^2 \lesssim \rho(\mathbf{y}_0)^{-2}$, so, by Lemma~\ref{lem:H_t_H_t_1_difference}, it is enough to prove that
\begin{align*}
&\int_{\mathcal{O}(B(\mathbf{y}_0,6\rho(\mathbf{y}_0)))}\sup_{0<t \leq \rho(\mathbf{y}_0)^2} t\rho^{-2}(\mathbf{y}_0)\mathcal{G}_{t/c}(\mathbf{x},\mathbf{y})\,dw(\mathbf{x}) \leq C
\end{align*}
for $\mathbf{y} \in \mathcal{O}(B(\mathbf{y}_0,3\rho(\mathbf{y}_0)))$, with $C>0$ which does not depend on $\mathbf{y}_0$ and $\mathbf{y}$. It is easy to see that $\mathcal{O}(B(\mathbf{y}_0,6\rho(\mathbf{y}_0))) \subset \mathcal{O}(B(\mathbf{y},12\rho(\mathbf{y}_0)))$ and
\begin{align*}
&\sup_{0<t \leq \rho(\mathbf{y}_0)^2} t\rho^{-2}(\mathbf{y}_0)\mathcal{G}_{t/c}(\mathbf{x},\mathbf{y})  \leq \sup_{0<t \leq \rho(\mathbf{y}_0)^2} t^{1/2}\rho^{-1}(\mathbf{y}_0)\mathcal{G}_{t/c}(\mathbf{x},\mathbf{y}).
\end{align*}
Using Lemma~\ref{lem:substitute_t_by_dz} with $\alpha=-1/2$, we see that it is enough to estimate
\begin{align*}
\int_{\mathcal{O}(B(\mathbf{y},12\rho(\mathbf{y}_0)))}\dz(\mathbf{x},\mathbf{y})\rho(\mathbf{y}_0)^{-1}\frac{1}{w(B(\mathbf{y},\dz(\mathbf{x},\mathbf{y})))}\,dw(\mathbf{x}).
\end{align*}
To this end we apply Lemma~\ref{lem:summation} with $\alpha=1/2$.
\end{proof}

\begin{lemma}\label{lem:atom_in_H1_Dunkl_Hermite}
For any $A>1$ there is $C>0$ such that for any function $a(\mathbf{x})$ such that $\supp a \subset B(\mathbf{p},r)$ for some $\mathbf{p} \in \mathbb{R}^N$, $\sup_{\mathbf{x} \in \mathbb{R}^N}|a(\mathbf{x})| \leq w(B(\mathbf{p},r))^{-1}$ and $\rho(\mathbf{p}) \leq r \leq A\rho(\mathbf{p})$ we have
\begin{equation}
\|a\|_{\mathcal{H}^{1}} \leq C.
\end{equation}
\end{lemma}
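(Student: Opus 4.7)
The plan is to split $\sup_{t>0}|K_t a|$ at the threshold $t=T^2$ with $T:=A\rho(\mathbf{p})\geq r$ and handle the two regimes separately. For $0<t\leq T^2$ I would write $|K_t a|\leq|K_t a-H_t a|+|H_t a|$; the first summand is controlled by a straightforward extension of Proposition~\ref{propo:H_t_H_t_1_difference} to supports in $B(\mathbf{y}_0,A\rho(\mathbf{y}_0))$ (the proof adapts verbatim with constants depending on $A$, using $\|a\|_{L^1(dw)}\leq 1$), and the second summand, in view of $\|a\|_\infty\leq w(B(\mathbf{p},r))^{-1}\leq C_A\,w(B(\mathbf{p},T))^{-1}$ from doubling, is bounded in $L^1(dw)$ by Lemma~\ref{lem:atom_in_H1} applied with this $T$.

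For $t>T^2$ I would exploit the Hermite exponential factor in the identity $k_t(\mathbf{x},\mathbf{y})=\exp\!\bigl((\tfrac{1}{4t_1}-\tfrac{1}{2}\coth 2t)(\|\mathbf{x}\|^2+\|\mathbf{y}\|^2)\bigr)h_{t_1}(\mathbf{x},\mathbf{y})$. By Lemma~\ref{lem:t_t_1_estimations}~(C),(D) this factor is dominated by $e^{-c\min(t,1)(\|\mathbf{x}\|^2+\|\mathbf{y}\|^2)}$. A short case analysis (treating $\|\mathbf{p}\|\leq 2A$ and $\|\mathbf{p}\|>2A$ separately) shows $1+\|\mathbf{y}\|^2\gtrsim\rho(\mathbf{p})^{-2}$ for every $\mathbf{y}\in\mathrm{supp}\,a$, yielding the pointwise estimate
\[
|K_t a(\mathbf{x})|\leq C\,e^{-c\min(t,1)/\rho(\mathbf{p})^2}\,e^{-c\min(t,1)\|\mathbf{x}\|^2/4}\,H_{t_1}|a|(\mathbf{x}).
\]
I would then dyadically decompose $(T^2,\infty)$ as $\bigcup_k [2^k T^2,2^{k+1}T^2]$: while $2^k T^2\leq 1$ the factor $e^{-c\min(t,1)/\rho(\mathbf{p})^2}$ yields a geometric weight $e^{-c\cdot 2^k}$, and past this threshold it becomes a fixed gain $e^{-c/\rho(\mathbf{p})^2}$ together with the strong decay $e^{-c\|\mathbf{x}\|^2/4}$ valid for $t\geq 1$. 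On each dyadic piece the remaining factor $e^{-c\min(t,1)\|\mathbf{x}\|^2/4}H_{t_1}|a|(\mathbf{x})$ is integrable against $dw(\mathbf{x})$: for $\mathbf{x}\in\mathcal{O}(B(\mathbf{p},2T))$ one uses $H_{t_1}|a|(\mathbf{x})\leq\|a\|_\infty$ and $w(\mathcal{O}(B(\mathbf{p},2T)))\lesssim w(B(\mathbf{p},r))$, while off this set one invokes Theorem~\ref{teo:theoremGauss}~(A) and Lemma~\ref{lem:substitute_t_by_dz} to bound $H_{t_1}|a|$ by a Gaussian concentrated at $\mathbf{p}$. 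Summing the geometric weights over~$k$ closes the estimate.

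The main obstacle is the long-time regime: taking $\sup_t$ destroys the $L^1$-contractivity of $K_t$, so the na\"ive bound $|K_t a|\leq K_t|a|$ followed by Fubini does not suffice (the Hardy--Littlewood--type maximal operator involved is only weak $(1,1)$). Overcoming this requires carrying the Hermite decay $e^{-c\min(t,1)(\|\mathbf{x}\|^2+\|\mathbf{y}\|^2)}$ throughout and exploiting the location $\mathrm{supp}\,a\subset B(\mathbf{p},A\rho(\mathbf{p}))$ to convert part of it into the $t$-dependent gain $e^{-c\min(t,1)/\rho(\mathbf{p})^2}$; this gain becomes geometrically small only along a dyadic rescaling in $t$ and is precisely what replaces the missing strong $L^1$-bound on the bare maximal operator.
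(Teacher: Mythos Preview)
Your approach is workable but differs from the paper's and is more elaborate than needed in both regimes.

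For $0<t\le T^2$ the paper does not invoke Proposition~\ref{propo:H_t_H_t_1_difference} at all; it simply uses the pointwise inequality $k_t\le h_{t_1}$ (immediate from~\eqref{eq:connection_Dunkl_Dunkl_Hermite} since $\tfrac{1}{4t_1}-\tfrac12\coth 2t<0$) together with $t\sim t_1$ for $t<1$, and then applies Lemma~\ref{lem:atom_in_H1} directly. Your detour through the $K_t-H_t$ comparison is correct but unnecessary here.

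For $t>r^2$ the paper also avoids a dyadic decomposition. It splits into $t\ge 1$ (handled exactly as you indicate, via $k_t(\mathbf x,\mathbf y)\lesssim e^{-\|\mathbf x\|^2/4}w(B(\mathbf x,1))^{-1}$) and $r^2<t<1$. On the latter range, rather than summing over dyadic scales, the paper multiplies the two exponentials and uses the elementary bound
\[
e^{-ct(\|\mathbf x\|^2+\|\mathbf y\|^2)}\,e^{-c\,\dz(\mathbf x,\mathbf y)^2/t}
\;\lesssim\;\frac{1}{t(1+\|\mathbf y\|^2)}\cdot\frac{t}{\dz(\mathbf x,\mathbf y)^2}
\;\lesssim\;\frac{r^2}{\dz(\mathbf x,\mathbf y)^2},
\]
obtaining a $t$-independent estimate that is then integrated with Lemmas~\ref{lem:substitute_t_by_dz} and~\ref{lem:summation}. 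This single pointwise inequality replaces your entire geometric series.

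One point in your outline needs tightening: you dyadically decompose all of $(T^2,\infty)$ and then propose to ``sum the geometric weights over~$k$,'' but for the infinitely many $k$ with $2^kT^2>1$ the weight $e^{-c/\rho(\mathbf p)^2}$ is constant in $k$ and that series diverges. The fix is immediate: treat $t\ge 1$ as a single piece, using $\sup_{t\ge 1}H_{t_1}|a|(\mathbf x)\lesssim w(B(\mathbf x,1))^{-1}$ (since $t_1\ge\tfrac12\sinh 2$ there) and then integrating $e^{-c\|\mathbf x\|^2/4}w(B(\mathbf x,1))^{-1}$---which is exactly the paper's $J_1$ estimate.
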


\begin{proof}
Thanks to  Lemma~\ref{lem:atom_in_H1}, the inequality $k_t(\mathbf{x},\mathbf{y}) \leq {h_{t_1}}(\mathbf{x},\mathbf{y})$ (see~\eqref{eq:connection_Dunkl_Dunkl_Hermite}), and $t \sim t_1$ for $t \leq 1$ (see Lemma~\ref{lem:t_t_1_estimations}~\eqref{numitem:t<t_1}), it is enough to prove
\begin{align*}
\int_{\mathbb{R}^N}\sup_{t > r^2}k_t(\mathbf{x},\mathbf{y})\,dw(\mathbf{x})&\leq\int_{\mathbb{R}^N}\sup_{t \geq 1}k_t(\mathbf{x},\mathbf{y})\,dw(\mathbf{x})+\int_{\mathbb{R}^N}\sup_{r^2<t<1}k_t(\mathbf{x},\mathbf{y})\,dw(\mathbf{x})\\&=J_1(\mathbf{y})+J_2(\mathbf{y}) \leq C,
\end{align*}
with $C>0$ which is independent of $r$, $\mathbf{p}$, and $\mathbf{y} \in B(\mathbf{p},r)$.  To estimate $J_1(\mathbf{y})$, we use Lemma~\ref{lem:t_t_1_estimations}~\eqref{numitem:coth>C}. Indeed, by~\eqref{eq:connection_Dunkl_Dunkl_Hermite} and Theorem~\ref{teo:theoremGauss}~\eqref{numitem:Gauss},
\begin{align*}
J_1(\mathbf{y}) \lesssim \int_{\mathbb{R}^N}\frac{1}{w(B(\mathbf{x},1))}\exp(-\|\mathbf{x}\|^2/4)\,dw(\mathbf{x}) \leq C.
\end{align*}
For the purpose of estimating $J_2(\mathbf{y})$, split the integral into integrals over $\mathcal{O}(B(\mathbf{y},r))$ and $(\mathcal{O}(B(\mathbf{y},r)))^{c}$. The first one is easy to estimate thanks to $k_t(\mathbf{x},\mathbf{y}) \lesssim w(B(\mathbf{y},\sqrt{t}))^{-1}$. For the second one, we use~\eqref{eq:connection_Dunkl_Dunkl_Hermite}, Theorem~\ref{teo:theoremGauss}~\eqref{numitem:Gauss}, and Lemma~\ref{lem:t_t_1_estimations}~\eqref{numitem:coth_t} and~\eqref{numitem:t<t_1}. We have
\begin{align*}
k_t(\mathbf{x},\mathbf{y}) \lesssim \exp\left(-\frac{t}{4}(\|\mathbf{x}\|^2+\|\mathbf{y}\|^2)\right)\exp\left(\frac{-c\dz(\mathbf{x},\mathbf{y})^2}{2t}\right)\mathcal{G}_{2t/c}(\mathbf{x},\mathbf{y}) .
\end{align*}
Since $\rho(\mathbf{y}) \lesssim \rho(\mathbf{p})$ and $\rho(\mathbf{p}) \leq r$, we have (see Lemma~\ref{lem:rho_property})
\begin{align*}
\exp\left(-\frac{t(\|\mathbf{x}\|^2+\|\mathbf{y}\|^2)}{4}\right)\exp\left(\frac{-c\dz(\mathbf{x},\mathbf{y})^2}{2t}\right) &\lesssim \frac{1}{t(\|\mathbf{x}\|^2+\|\mathbf{y}\|^2+1)}\frac{t}{\dz(\mathbf{x},\mathbf{y})^2} \\&\lesssim \frac{r^{2}}{\dz(\mathbf{x},\mathbf{y})^2}.
\end{align*}
The rest of the proof follows by  Lemma~\ref{lem:substitute_t_by_dz} with $\alpha=0$ and Lemma~\ref{lem:summation} with $\alpha=1$.
\end{proof}

As the consequence of the proof of Lemma~\ref{lem:atom_in_H1_Dunkl_Hermite} we obtain the following corollary.

\begin{corollary}\label{coro:Dunkl_Hermite_for_large_t}
There exists a constant $C>0$ such that for any $\mathbf{y}_0 \in \mathbb{R}^N$ and for any $f \in L^{1}(B(\mathbf{y}_0,\rho(\mathbf{y}_0)),dw)$ we have
\begin{equation}
\|\sup_{t> \rho(\mathbf{y}_0)^2}K_tf(\cdot)\|_{L^1(\mathbb{R}^N,dw)} \leq C\|f\|_{L^1(\mathbb{R}^N,dw)}.
\end{equation}
\end{corollary}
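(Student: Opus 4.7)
The plan is to reduce the $L^1$ bound on the maximal function to a kernel estimate that was already effectively obtained in the proof of Lemma~\ref{lem:atom_in_H1_Dunkl_Hermite}. Writing out $K_tf(\mathbf{x})$ as an integral against the kernel $k_t(\mathbf{x},\mathbf{y})$, taking the supremum inside, and applying Tonelli's theorem I get
\begin{equation*}
\bigl\|\sup_{t>\rho(\mathbf{y}_0)^2}|K_tf(\cdot)|\bigr\|_{L^1(\mathbb{R}^N,dw)}
\leq
\int_{B(\mathbf{y}_0,\rho(\mathbf{y}_0))}|f(\mathbf{y})|\,
\Bigl(\int_{\mathbb{R}^N}\sup_{t>\rho(\mathbf{y}_0)^2}k_t(\mathbf{x},\mathbf{y})\,dw(\mathbf{x})\Bigr)\,dw(\mathbf{y}),
\end{equation*}
so it suffices to show that the inner integral is bounded by a constant independent of $\mathbf{y}_0$ and $\mathbf{y}\in B(\mathbf{y}_0,\rho(\mathbf{y}_0))$.

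Next I would specialize the argument from the proof of Lemma~\ref{lem:atom_in_H1_Dunkl_Hermite} to $r=\rho(\mathbf{y}_0)$, $\mathbf{p}=\mathbf{y}_0$. Splitting the integral as $J_1(\mathbf{y})+J_2(\mathbf{y})$, where $J_1(\mathbf{y})$ corresponds to $t\geq 1$ and $J_2(\mathbf{y})$ to $\rho(\mathbf{y}_0)^2<t<1$ (the latter being empty when $\rho(\mathbf{y}_0)\geq 1$), the control of $J_1(\mathbf{y})$ follows from~\eqref{eq:connection_Dunkl_Dunkl_Hermite}, Theorem~\ref{teo:theoremGauss}\,\eqref{numitem:Gauss}, and Lemma~\ref{lem:t_t_1_estimations}\,\eqref{numitem:coth>C}, yielding the Gaussian-type tail $\exp(-\|\mathbf{x}\|^2/4)\,w(B(\mathbf{x},1))^{-1}$, which is integrable uniformly in $\mathbf{y}$. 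For $J_2(\mathbf{y})$ I split $\mathbb{R}^N$ into $\mathcal{O}(B(\mathbf{y},\rho(\mathbf{y}_0)))$ and its complement: on the orbit-ball piece the bound $k_t(\mathbf{x},\mathbf{y})\lesssim w(B(\mathbf{y},\sqrt t))^{-1}$ combined with~\eqref{eq:measure_of_balls} gives a uniform bound, while on the complement the Gaussian factor from Lemma~\ref{lem:t_t_1_estimations}\,\eqref{numitem:coth_t} together with Lemma~\ref{lem:substitute_t_by_dz} and Lemma~\ref{lem:summation} with $\alpha=1$ finishes the estimate.

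The only subtlety is that the proof of Lemma~\ref{lem:atom_in_H1_Dunkl_Hermite} was actually carried out under the stronger hypothesis $\rho(\mathbf{p})\leq r\leq A\rho(\mathbf{p})$, but I only use the case $r=\rho(\mathbf{p})=\rho(\mathbf{y}_0)$, which is permitted. In particular, the inequality $\rho(\mathbf{y})\lesssim\rho(\mathbf{p})\leq r$ that is invoked in the $J_2$ estimate still holds via Lemma~\ref{lem:rho_property}, since $\mathbf{y}\in B(\mathbf{y}_0,\rho(\mathbf{y}_0))$. The main (mild) obstacle is just keeping track of the case $\rho(\mathbf{y}_0)\geq 1$ versus $\rho(\mathbf{y}_0)<1$ to make sure the decomposition $J_1+J_2$ covers $t>\rho(\mathbf{y}_0)^2$ without gaps.
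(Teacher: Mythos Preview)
Your proposal is correct and follows exactly the route the paper indicates: the paper states the corollary ``as the consequence of the proof of Lemma~\ref{lem:atom_in_H1_Dunkl_Hermite}'', and you have simply unpacked that reference by specializing the $J_1+J_2$ argument there to $\mathbf{p}=\mathbf{y}_0$, $r=\rho(\mathbf{y}_0)$ and integrating against $|f|$. The only remark is that your parenthetical about $\rho(\mathbf{y}_0)\geq 1$ is essentially vacuous, since $\rho(\mathbf{y}_0)=\tfrac{1}{1+\|\mathbf{y}_0\|}\leq 1$ always, so the split $J_1+J_2$ never leaves a gap.
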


\begin{proof}[Proof of Theorem~\ref{teo:atomic_Dunkl_Hermite}]
In order to prove $\mathcal{H}^{1,\rm{at}} \subseteq \mathcal{H}^1$ it is enough to show that there exists a constant $C>0$ such that $\|a\|_{\mathcal{H}^1} \leq C$ for any atom $a$ for $\mathcal{H}^{1,{{\rm{at}}}}$. Fix an atom $a$ for $\mathcal{H}^{1,\rm{at}}$. Let $B(\mathbf{x},r)$ be as in Definition~\ref{def:atomic_Dunkl_Hermite}. We have two cases: $r<\rho(\mathbf{x})$ or $\rho(\mathbf{x}) \leq r \leq A\rho(\mathbf{x})$.
In the second case, we use Lemma~\ref{lem:atom_in_H1_Dunkl_Hermite}. In the first case, the required inequality is a consequence of Proposition~\ref{propo:H_t_H_t_1_difference}, Corollary~\ref{coro:Dunkl_Hermite_for_large_t}, and Proposition~\ref{propo:Goldberg}.

For the opposite inclusion, take $f \in \mathcal{H}^1$. Let $\{\psi_m\}$ be the partition of unity from Lemma~\ref{lem:partition_of_unity_invariant} associated with the collection of balls  $\{B(\mathbf{y}_m,\rho(\mathbf{y}_m))\}$. Then $f=\sum_{m=1}^{\infty}f\psi_m$. We are going to prove that there is a constant $C>0$ independent of $f$ such that
\begin{equation}
\label{eq:main_goal}
\sum_{m=1}^{\infty}\|f \psi_m \|_{H^{1}_{{\rm loc},\rho(\mathbf{y}_m)}} \leq C\|f\|_{\mathcal{H}^{1}}.
\end{equation}
To this end we write

\begin{align*}
&\sum_{m=1}^{\infty}\|f \psi_m \|_{H^{1}_{{\rm{loc}},\rho(\mathbf{y}_m)}} = \sum_{m=1}^{\infty}\left\|\sup_{0<t \leq \rho(\mathbf{y}_m)^2}\left|{H_t}(f\psi_m)\right|\right\|_{L^1(\mathbb{R}^N,dw)}\\&\leq
\sum_{m=1}^{\infty}\left\|\sup_{0<t \leq \rho(\mathbf{y}_m)^2}|({H_t}-K_t)(f\psi_m)|\right\|_{L^1(\mathbb{R}^N,dw)}\\&+
\sum_{m=1}^{\infty}\left\|\sup_{0<t \leq \rho(\mathbf{y}_m)^2}\left|K_t(f\psi_m)(\cdot)-\psi_m(\cdot)K_t f(\cdot)\right|\right\|_{L^1(\mathbb{R}^N,dw)}\\&+
\sum_{m=1}^{\infty}\left\|\psi_m\sup_{0<t \leq \rho(\mathbf{y}_m)^2}|K_tf|\right\|_{L^1(\mathbb{R}^N,dw)}=I_1+I_2+I_3.
\end{align*}
Obviously, $I_3 \leq \|f\|_{\mathcal{H}^1}$. By Lemma~\ref{propo:H_t_H_t_1_difference}, $I_1 \lesssim \|f\|_{L^1(\mathbb{R}^N,dw)} \leq \|f\|_{\mathcal{H}^1}$.

Now we are going to estimate $I_2$ by $\|f\|_{L^1(\mathbb{R}^N,dw)}$. For this purpose it is enough to show that
\begin{align*}
\sup_{\mathbf{y} \in \mathbb{R}^N}\sum_{m=1}^{\infty}\int \sup_{0<t \leq \rho(\mathbf{y}_m)^2} |\psi_m(\mathbf{y})-\psi_m(\mathbf{x})|k_t(\mathbf{x},\mathbf{y})\,dw(\mathbf{x})<\infty.
\end{align*}
Fix $\mathbf{y} \in \mathbb{R}^N$ and consider a single summand. Split the integral above into two parts: over $\mathcal{O}((B(\mathbf{y}_m,6\rho(\mathbf{y}_m)))^{c}$ and over $\mathcal{O}(B(\mathbf{y}_m,6\rho(\mathbf{y}_m)))$. The first integral is zero if $\mathbf{y} \not\in B(\mathbf{y}_m,3\rho(\mathbf{y}_m))$. By Lemma~\ref{lem:partition_of_unity_invariant}~\eqref{numitem:finite_covering} there are at most $M$ indices $m$ for which we have $\mathbf{y} \in B(\mathbf{y}_m,3\rho(\mathbf{y}_m))$ and $M$ is independent of $\mathbf{y}$. Assume that $\mathbf{y} \in B(\mathbf{y}_m,3\rho(\mathbf{y}_m))$, by Lemma~\ref{lem:substitute_t_by_dz}, we have
\begin{align*}
&\int_{(\mathcal{O}(B(\mathbf{y}_m,6\rho(\mathbf{y}_m))))^{c}}\sup_{0<t \leq \rho^2(\mathbf{y}_m)} |\psi_m(\mathbf{y})-\psi_m(\mathbf{x})|k_t(\mathbf{x},\mathbf{y})\,dw(\mathbf{x}) \\& \lesssim \int_{(\mathcal{O}(B(\mathbf{y}_m,6\rho(\mathbf{y}_m))))^{c}}\frac{1}{w(B(\mathbf{x},\dz(\mathbf{x},\mathbf{y})))}\exp\left(\frac{-c\dz(\mathbf{x},\mathbf{y})^2}{\rho(\mathbf{y}_m)^2}\right)\,dw(\mathbf{x}) \\&\lesssim \int_{\mathbb{R}^N}\frac{1}{w(B(\mathbf{x},\rho(\mathbf{y}_m)))}\exp\left(\frac{-c\dz(\mathbf{x},\mathbf{y})^2}{\rho(\mathbf{y}_m)^2}\right)\,dw(\mathbf{x}) \leq C,
\end{align*}
with $C>0$ which is independent of $\mathbf{y}_m$ (see~\eqref{eq:integral_of_kernel}).

Therefore, it remains to show that
\begin{equation}\label{eq:partition_integrals}
\sum_{m=1}^{\infty}\int_{\mathcal{O}(B(\mathbf{y}_m,6\rho(\mathbf{y}_m))))} \sup_{0<t<\rho(\mathbf{y}_m)^2} |\psi_m(\mathbf{y})-\psi_m(\mathbf{x})|k_t(\mathbf{x},\mathbf{y})\,dw(\mathbf{x}) \lesssim C
\end{equation}
with $C>0$ which is independent of $\mathbf{y} \in \mathbb{R}^N$. Split a single integral into two parts: the integral over $\mathcal{O}(B(\mathbf{y}_m,6\rho(\mathbf{y}_m))))\cap (\mathcal{O}(B(\mathbf{y},2\rho(\mathbf{y}))))^{c}$ and over $\mathcal{O}(B(\mathbf{y}_m,6\rho(\mathbf{y}_m)))) \cap \mathcal{O}(B(\mathbf{y},2\rho(\mathbf{y})))$.

We consider each term separately. For the first one, by ~\eqref{eq:connection_Dunkl_Dunkl_Hermite}, Theorem~\ref{teo:theoremGauss}~\eqref{numitem:Gauss}, Lemma~\ref{lem:t_t_1_estimations}~\eqref{numitem:t<t_1}, Lemma~\ref{lem:substitute_t_by_dz}, and Lemma~\ref{lem:rho_property} for $0<t<\rho(\mathbf{y}_m)^2$ we have
\begin{align*}
&|\psi_m(\mathbf{y})-\psi_m(\mathbf{x})|k_t(\mathbf{x},\mathbf{y}) \lesssim {h_{t_1}}(\mathbf{x},\mathbf{y}) \lesssim \min\left\{1,\frac{t_1}{\|\mathbf{x}-\mathbf{y}\|^2}\right\}\mathcal{G}_{t_1/c}(\mathbf{x},\mathbf{y}) \\&\lesssim \min\left\{1,\frac{\rho(\mathbf{y}_m)^2}{\|\mathbf{x}-\mathbf{y}\|^2}\right\} w(B(\mathbf{y},\dz(\mathbf{x},\mathbf{y})))^{-1} \lesssim \min\left\{1,\frac{\rho(\mathbf{x})^2}{\|\mathbf{x}-\mathbf{y}\|^2}\right\} w(B(\mathbf{y},\dz(\mathbf{x},\mathbf{y})))^{-1}.
\end{align*}
If $\rho(\mathbf{x}) \leq 2 \rho(\mathbf{y})$, then
\begin{align*}
\min\left\{1,\frac{\rho(\mathbf{x})^2}{\|\mathbf{x}-\mathbf{y}\|^2}\right\} & \lesssim \frac{\rho(\mathbf{y})}{\|\mathbf{x}-\mathbf{y}\|} \leq \frac{\rho(\mathbf{y})}{\dz(\mathbf{x},\mathbf{y})}.
\end{align*}
If $\rho(\mathbf{x}) \geq 2 \rho(\mathbf{y})$ it is easy to check that $\frac{1}{\|\mathbf{x}-\mathbf{y}\|} \lesssim \rho(\mathbf{y})$, so
\begin{align*}
\frac{\rho(\mathbf{x})^2}{\|\mathbf{x}-\mathbf{y}\|^2} \leq \frac{1}{\|\mathbf{x}-\mathbf{y}\|^2}\lesssim \frac{\rho(\mathbf{y})}{\|\mathbf{x}-\mathbf{y}\|} \leq \frac{\rho(\mathbf{y})}{\dz(\mathbf{x},\mathbf{y})}.
\end{align*}
Finally, 
\begin{align*}
|\psi_m(\mathbf{y})-\psi_m(\mathbf{x})|k_t(\mathbf{x},\mathbf{y}) \lesssim\frac{\rho(\mathbf{y})}{\dz(\mathbf{x},\mathbf{y})} w(B(\mathbf{y},\dz(\mathbf{x},\mathbf{y})))^{-1},
\end{align*}
and, by the finite covering property,
\begin{align*}
&\sum_{m=1}^{\infty}\int_{\mathcal{O}(B(\mathbf{y}_m,3\rho(\mathbf{y}_m))))\cap (\mathcal{O}(B(\mathbf{y},2\rho(\mathbf{y})))^{c}}\sup_{0<t<\rho(\mathbf{y}_m)^2} |\psi_m(\mathbf{y})-\psi_m(\mathbf{x})|k_t(\mathbf{x},\mathbf{y})\,dw(\mathbf{x})\\& \lesssim
\sum_{m=1}^{\infty}\int_{\mathcal{O}(B(\mathbf{y}_m,3\rho(\mathbf{y}_m))))\cap (\mathcal{O}(B(\mathbf{y},2\rho(\mathbf{y})))^{c}}\frac{\rho(\mathbf{y})}{\dz(\mathbf{x},\mathbf{y})} w(B(\mathbf{y},\dz(\mathbf{x},\mathbf{y})))^{-1}\,dw(\mathbf{x})\\&\lesssim
\int_{(\mathcal{O}(B(\mathbf{y},2\rho(\mathbf{y})))^{c}}\frac{\rho(\mathbf{y})}{\dz(\mathbf{x},\mathbf{y})} w(B(\mathbf{y},\dz(\mathbf{x},\mathbf{y})))^{-1}\,dw(\mathbf{x}).
\end{align*}
The last integral is bounded by constant by Lemma~\ref{lem:summation}~\eqref{numitem:summation_outside} with $\alpha=1/2$.
\\
For the sum of the integrals over $\mathcal{O}(B(\mathbf{y}_m,6\rho(\mathbf{y}_m))))\cap\mathcal{O}(B(\mathbf{y},2\rho(\mathbf{y})))$ in ~\eqref{eq:partition_integrals}, by~\eqref{eq:connection_Dunkl_Dunkl_Hermite} and Lemma~\ref{lem:psi_m_cancellation}, we have
\begin{align*}
|\psi_m(\mathbf{y})-\psi_m(\mathbf{x})|k_t(\mathbf{x},\mathbf{y}) \lesssim \frac{\sqrt{t}}{\rho(\mathbf{y})}\mathcal{G}_{2t_1/c}(\mathbf{x},\mathbf{y}).
\end{align*}
Thus, we apply Lemma~\ref{lem:substitute_t_by_dz} with $\alpha=-1/2$ and, by the finite covering property, reduce the problem to an estimation of
\begin{align*}
\int_{\mathcal{O}(B(\mathbf{y},2\rho(\mathbf{y})))}\dz(\mathbf{x},\mathbf{y})\rho(\mathbf{y})^{-1}\frac{1}{w(B(\mathbf{y},\dz(\mathbf{x},\mathbf{y})))}\,dw(\mathbf{x}).
\end{align*}
The integral is bounded by a constant independently of $\mathbf{y}$ by Lemma~\ref{lem:summation} with $\alpha=1/2$. Thus~\eqref{eq:main_goal} is established.

We have obtained that each $f\psi_m$ belongs to  $H_{{\rm{loc}}, \rho(\mathbf{y}_m)}^{1}$.  So, by Proposition~\ref{propo:Goldberg},  $f\psi_m$ admits an atomic decomposition $f(\mathbf{x})\psi_m(\mathbf{x})=\sum_{j=1}^{\infty}c_j^{m}a_j^m(\mathbf{x})$, where $a_{j}^{m}$ are $H^{1,{{\rm{at}}}}_{{\rm{loc}}, \rho(\mathbf{y}_m)}$ atoms such that $\supp a_j^m \subset B(\mathbf{y}_m,12\rho(\mathbf{y}_m))$ and
\begin{equation}\label{eq:partial_atomic}
\sum_{j=1}^{\infty}|c_j^{m}| \lesssim \|f\psi_m\|_{\mathcal{H}^{1}_{{\rm{loc}},\rho(\mathbf{y}_m)}}.
\end{equation}
Observe that by Lemma~\ref{lem:rho_property} with $K=12$, $a_{j}^m$ are $\mathcal{H}^{1,{{\rm{at}}}}$ atoms if we take $A>1$ large enough. Finally, by~\eqref{eq:partial_atomic} and~\eqref{eq:main_goal}, we obtain
\begin{align*}
f(\mathbf{x})=\sum_{m=1}^{\infty}f(\mathbf{x})\psi_m(\mathbf{x})=\sum_{m=1}^{\infty}\sum_{j=1}^{\infty}c_j^{m}a_j^m(\mathbf{x}) \qquad \text{and} \qquad \sum_{m=1}^{\infty}\sum_{j=1}^{\infty}|c_{j}^{m}| \lesssim \|f\|_{\mathcal{H}^1}.
\end{align*}
\end{proof}

\subsection{Proof of Theorem~\ref{teo:Riesz_Hermite}}
Using the fact that $T_{j,\mathbf{x}}E(\mathbf{x},\mathbf{y})=y_j E(\mathbf{x},\mathbf{y})$  together with~\eqref{eq:connection_Dunkl_Dunkl_Hermite}, we obtain

\begin{equation}
\label{eq:formula_Dj}
T_{j,\mathbf{x}}{h_t}(\mathbf{x},\mathbf{y})=\frac {y_j-x_j}{2t}{h_t}(\mathbf{x},\mathbf{y}),
\end{equation}

\begin{equation}
\label{eq:formula_T_jH_t}
\begin{split}
T_{j,\mathbf{x}} k_t(\mathbf{x},\mathbf{y})=&\frac{y_j-x_j}{2t_1}k_t(\mathbf{x},\mathbf{y})+2x_j\left(\frac{1}{4t_1}-\frac{1}{2}\coth(2t)\right)k_t(\mathbf{x},\mathbf{y})\\=&\exp\left(\left(\frac{1}{4t_1}-\frac{1}{2}\coth(2t)\right)(\|\mathbf{x}\|^2+\|\mathbf{y}\|^2)\right)T_{j,\mathbf{x}}{h_{t_1}}(\mathbf{x},\mathbf{y})\\&+2x_j\left(\frac{1}{4t_1}-\frac{1}{2}\coth(2t)\right)k_t(\mathbf{x},\mathbf{y}),
\end{split}
\end{equation}
where here and subsequently $T_{j,\mathbf{x}}$ denotes the action of $T_j$ with respect to the variable $\mathbf{x}$.
\begin{lemma}\label{lem:estimation_T_jH_t}
There are constants $C,c>0$ such that for any $\mathbf{x},\mathbf{y} \in \mathbb{R}^N$, $t>0$ we have
\begin{align*}
\left|\frac{1}{\sqrt{t}}T_{j,\mathbf{x}} k_t(\mathbf{x},\mathbf{y})\right| \leq& C\left(1+\frac{1}{t}\right)\exp\left(\frac{\frac{1}{4t_1}-\frac{1}{2}\coth(2t)}{2}\left(\|\mathbf{x}\|^2+\|\mathbf{y}\|^2\right)\right)\times \\&\left(1+\frac{\|\mathbf{x}-\mathbf{y}\|}{\sqrt{t_1}}\right)^{-1}\mathcal{G}_{t_1/c}(\mathbf{x},\mathbf{y})
\end{align*}
for any $j=1,2,\ldots,N$.
\end{lemma}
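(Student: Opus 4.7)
The starting point is the identity \eqref{eq:formula_T_jH_t}, which displays $T_{j,\mathbf{x}}k_t(\mathbf{x},\mathbf{y})$ as the sum of a kernel piece $\exp(A(\|\mathbf{x}\|^2+\|\mathbf{y}\|^2))\,T_{j,\mathbf{x}}h_{t_1}(\mathbf{x},\mathbf{y})$ and a multiplicative piece $2x_j A\,k_t(\mathbf{x},\mathbf{y})$, where throughout the proof I abbreviate $A=\frac{1}{4t_1}-\frac{1}{2}\coth(2t)$. The basic structural fact I shall use repeatedly is that $A<0$ for every $t>0$ (indeed $A=(1-\cosh(2t))/(2\sinh(2t))$, and Lemma~\ref{lem:t_t_1_estimations}~\eqref{numitem:coth_t},~\eqref{numitem:coth>C} give the quantitative bounds $|A|\lesssim t$ for $0<t<1$ and $|A|\leq 1/2$ for $t\geq 1$). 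Negativity permits the split $\exp(A(\|\mathbf{x}\|^2+\|\mathbf{y}\|^2))\leq \exp\bigl(\tfrac{A}{2}(\|\mathbf{x}\|^2+\|\mathbf{y}\|^2)\bigr)$, reserving one half-exponential to appear in the final bound and leaving the other half available to absorb polynomial growth in $\|\mathbf{x}\|$.

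For the first summand I rewrite $T_{j,\mathbf{x}}h_{t_1}$ through \eqref{eq:formula_Dj} as $(y_j-x_j)(2t_1)^{-1}h_{t_1}(\mathbf{x},\mathbf{y})$, so that $|T_{j,\mathbf{x}}h_{t_1}(\mathbf{x},\mathbf{y})|\leq\|\mathbf{x}-\mathbf{y}\|(2t_1)^{-1}h_{t_1}(\mathbf{x},\mathbf{y})$. Plugging in Theorem~\ref{teo:theoremGauss}~\eqref{numitem:Gauss} and applying the elementary inequality $u(1+u)^{-2}\leq (1+u)^{-1}$ with $u=\|\mathbf{x}-\mathbf{y}\|/\sqrt{t_1}$ yields
\[
|T_{j,\mathbf{x}}h_{t_1}(\mathbf{x},\mathbf{y})|\lesssim t_1^{-1/2}\Bigl(1+\tfrac{\|\mathbf{x}-\mathbf{y}\|}{\sqrt{t_1}}\Bigr)^{-1}\mathcal{G}_{t_1/c}(\mathbf{x},\mathbf{y}).
\]
Dividing by $\sqrt{t}$ leaves the prefactor $(tt_1)^{-1/2}$. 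Lemma~\ref{lem:t_t_1_estimations}~\eqref{numitem:t<t_1} gives $t_1\sim t$ for $0<t<1$, whence $(tt_1)^{-1/2}\lesssim 1/t$, while for $t\geq 1$ the same lemma gives $t_1\geq t\geq 1$, so $(tt_1)^{-1/2}\leq 1$. In either case $(tt_1)^{-1/2}\lesssim 1+1/t$, and the residual half-exponential is exactly the one appearing in the statement.

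For the second summand I use \eqref{eq:connection_Dunkl_Dunkl_Hermite} to write $k_t=\exp(A(\|\mathbf{x}\|^2+\|\mathbf{y}\|^2))h_{t_1}$, bound $|x_j|\leq \|\mathbf{x}\|$, and again apply Theorem~\ref{teo:theoremGauss}~\eqref{numitem:Gauss} with $(1+u)^{-2}\leq (1+u)^{-1}$ to obtain the desired $(1+\|\mathbf{x}-\mathbf{y}\|/\sqrt{t_1})^{-1}\mathcal{G}_{t_1/c}$ factor. What remains is
\[
\frac{|A|\,\|\mathbf{x}\|}{\sqrt{t}}\exp\bigl(A(\|\mathbf{x}\|^2+\|\mathbf{y}\|^2)\bigr),
\]
which I split as $\exp\bigl(\tfrac{A}{2}(\|\mathbf{x}\|^2+\|\mathbf{y}\|^2)\bigr)\cdot\exp\bigl(\tfrac{A}{2}(\|\mathbf{x}\|^2+\|\mathbf{y}\|^2)\bigr)$, discarding the factor $\exp(\tfrac{A}{2}\|\mathbf{y}\|^2)\leq 1$ and applying the elementary estimate $u\exp(-au^2/2)\leq C/\sqrt{a}$ (maximizing $ue^{-au^2/2}$ in $u$) with $u=\|\mathbf{x}\|$ and $a=|A|$. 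The prefactor becomes $|A|/(\sqrt{t}\sqrt{|A|})=\sqrt{|A|/t}$, which is $\lesssim 1$ in both regimes: for $0<t<1$ by $|A|\lesssim t$, and for $t\geq 1$ by $|A|\leq 1/2$ and $t\geq 1$. Thus this term is also dominated by the right-hand side of the lemma.

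The only genuine obstacle is the bookkeeping of the exponential factor: one must peel off exactly the half-exponent claimed in the statement and use the remaining half as a damping device, first to absorb $\|\mathbf{x}\|$ in the multiplicative piece and second to recover a clean $(1+1/t)$ bound. Once the negativity of $A$ is exploited, no Dunkl-specific subtlety remains beyond the identities \eqref{eq:formula_Dj}, \eqref{eq:formula_T_jH_t}, \eqref{eq:connection_Dunkl_Dunkl_Hermite} and the pointwise heat kernel estimate in Theorem~\ref{teo:theoremGauss}~\eqref{numitem:Gauss}.
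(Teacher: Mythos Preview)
Your proof is correct and follows the same decomposition as the paper: both split $T_{j,\mathbf{x}}k_t$ via \eqref{eq:formula_T_jH_t} into the two pieces $S_1$ and $S_2$, handle $S_1$ identically through \eqref{eq:formula_Dj} and Theorem~\ref{teo:theoremGauss}~\eqref{numitem:Gauss} with $(tt_1)^{-1/2}\lesssim 1+1/t$, and for $S_2$ your explicit half-exponential splitting to absorb $\|\mathbf{x}\|$ via $u\,e^{-au^2/2}\lesssim a^{-1/2}$ is exactly what the paper leaves implicit when it declares $S_2$ ``an easy consequence of~\eqref{eq:connection_Dunkl_Dunkl_Hermite} and Theorem~\ref{teo:theoremGauss}~\eqref{numitem:Gauss}.''
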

\begin{proof}
By~\eqref{eq:formula_T_jH_t} we have
\begin{align*}
\left|\frac{1}{\sqrt{t}}T_{j,\mathbf{x}} k_t(\mathbf{x},\mathbf{y})\right| &\leq \frac{1}{\sqrt{t}}\left|\frac{y_j-x_j}{2t_1}k_t(\mathbf{x},\mathbf{y})\right|+\frac{1}{\sqrt{t}}\left|2x_j\left(\frac{1}{4t_1}-\frac{1}{2}\coth(2t)\right)k_t(\mathbf{x},\mathbf{y})\right|\\&=S_1+S_2.
\end{align*}
For $S_1$, note that by Theorem~\ref{teo:theoremGauss}~\eqref{numitem:Gauss} and Lemma~\ref{lem:t_t_1_estimations}~\eqref{numitem:t<t_1} we have
\begin{align*}
&S_1\exp\left(-\left(\frac{1}{4t_1}-\frac{1}{2}\coth(2t)\right)\left(\|\mathbf{x}\|^2+\|\mathbf{y}\|^2\right)\right) = \frac{1}{\sqrt{t}} \left|\frac{y_j-x_j}{2t_1}{h_{t_1}}(\mathbf{x},\mathbf{y})\right|\\&\lesssim \left(1+\frac{\|\mathbf{x}-\mathbf{y}\|}{\sqrt{t_1}}\right)^{-2}\mathcal{G}_{t_1/c}(\mathbf{x},\mathbf{y})\frac{1}{\sqrt{t}} \left|\frac{y_j-x_j}{2t_1}\right| \lesssim \frac{1}{\sqrt{t_1 t}}  \left(1+\frac{\|\mathbf{x}-\mathbf{y}\|}{\sqrt{t_1}}\right)^{-1}\mathcal{G}_{t_1/c}(\mathbf{x},\mathbf{y})\\&\lesssim \left(1+\frac{1}{t}\right) \left(1+\frac{\|\mathbf{x}-\mathbf{y}\|}{\sqrt{t_1}}\right)^{-1}\mathcal{G}_{t_1/c}(\mathbf{x},\mathbf{y}).
\end{align*}
The estimation for $S_2$ is an easy consequence of~\eqref{eq:connection_Dunkl_Dunkl_Hermite} and Theorem~\ref{teo:theoremGauss}~\eqref{numitem:Gauss}.
\end{proof}

\begin{lemma}\label{lem:Riesz_H_t_H_t1}
Let $A>0$. There exist constants $C,c>0$ such that for any $\mathbf{x}, \mathbf{y}_0 \in \mathbb{R}^N$, $j=1,2,\ldots,N$, $\mathbf{y} \in B(\mathbf{y}_0,A\rho(\mathbf{y}_0))$, and $0<t<1$ the following inequality holds
\begin{equation}
\left|\frac{1}{\sqrt{t}}\left(T_{j,\mathbf{x}} k_t(\mathbf{x},\mathbf{y})-T_{j,\mathbf{x}}{h_t}(\mathbf{x},\mathbf{y})\right)\right| \leq C\rho(\mathbf{y}_0)^{-2}\mathcal{G}_{2t/c}(\mathbf{x},\mathbf{y}).
\end{equation}
\end{lemma}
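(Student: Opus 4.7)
Let $\kappa := \frac{1}{4t_1} - \frac{1}{2}\coth(2t)$. By Lemma~\ref{lem:t_t_1_estimations}~\eqref{numitem:coth_t}, $\kappa<0$ and $|\kappa|\lesssim t$ for $0<t<1$. My plan is to exploit the explicit identity~\eqref{eq:formula_T_jH_t}, which I rewrite (after adding and subtracting $T_{j,\mathbf x} h_{t_1}$) as
\begin{equation*}
T_{j,\mathbf x} k_t - T_{j,\mathbf x} h_t
 = \underbrace{\bigl(e^{\kappa(\|\mathbf x\|^2+\|\mathbf y\|^2)}-1\bigr)T_{j,\mathbf x} h_{t_1}}_{I}
 + \underbrace{\bigl(T_{j,\mathbf x} h_{t_1}-T_{j,\mathbf x} h_t\bigr)}_{II}
 + \underbrace{2\kappa x_j\, k_t}_{III},
\end{equation*}
and bound each piece by $\sqrt{t}\,\rho(\mathbf y_0)^{-2}\mathcal G_{2t/c}(\mathbf x,\mathbf y)$. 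Throughout, I will use $\mathcal G_{t_1/c}\lesssim \mathcal G_{2t/c}$ (which follows from $t\le t_1\le 2t$ and doubling) and the elementary observation $\rho(\mathbf y_0)^{-2}\ge 1\ge t$.

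\textbf{Terms $II$ and the warm-up part of $I,III$.} For $II$, I write $T_{j,\mathbf x} h_{t_1}-T_{j,\mathbf x} h_t=\int_t^{t_1}\partial_s T_{j,\mathbf x} h_s\,ds$ and invoke Theorem~\ref{teo:theoremGauss}~\eqref{numitem:space} with $m=1$ together with $t_1-t\lesssim t^3$ from Lemma~\ref{lem:t_t_1_estimations}~\eqref{numitem:t^3}, obtaining $|II|\lesssim t^{3}\cdot t^{-3/2}\mathcal G_{2t/c}=t^{3/2}\mathcal G_{2t/c}$, which after dividing by $\sqrt t$ is bounded by $t\,\mathcal G_{2t/c}\le \rho(\mathbf y_0)^{-2}\mathcal G_{2t/c}$. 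For $I$, since $\kappa\le 0$ we have $|e^{\kappa s}-1|\le |\kappa| s$ for $s\ge 0$; combined with Theorem~\ref{teo:theoremGauss}~\eqref{numitem:space} for $T_{j,\mathbf x} h_{t_1}$ this yields $|I|\lesssim t\,(\|\mathbf x\|^2+\|\mathbf y\|^2)\,t_1^{-1/2}\mathcal G_{t_1/c}$, so $|I|/\sqrt t\lesssim (\|\mathbf x\|^2+\|\mathbf y\|^2)\,\mathcal G_{2t/c}$. For $III$, using $k_t\le h_{t_1}$ (consequence of $\kappa\le 0$ and~\eqref{eq:connection_Dunkl_Dunkl_Hermite}) and $|\kappa|\lesssim t$, we get $|III|/\sqrt t\lesssim \sqrt t\, |x_j|\,\mathcal G_{t_1/c}\lesssim \sqrt t\,|x_j|\mathcal G_{2t/c}$.

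\textbf{Main obstacle: absorbing the polynomial factors in $\mathbf x$.} The nontrivial step is showing $(\|\mathbf x\|^2+\|\mathbf y\|^2)\mathcal G_{2t/c}\lesssim \rho(\mathbf y_0)^{-2}\mathcal G_{2t/c'}$ and $\sqrt t\,|x_j|\mathcal G_{2t/c}\lesssim \rho(\mathbf y_0)^{-2}\mathcal G_{2t/c'}$. Since $\mathbf y\in B(\mathbf y_0,A\rho(\mathbf y_0))$, the inequality $\|\mathbf y\|\lesssim \rho(\mathbf y_0)^{-1}$ gives $\|\mathbf y\|^2\lesssim \rho(\mathbf y_0)^{-2}$ directly. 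For $\|\mathbf x\|$, I split into two cases. If $\|\mathbf x\|\le 2\|\mathbf y\|$, then $\|\mathbf x\|\lesssim \rho(\mathbf y_0)^{-1}$ and $|x_j|\lesssim \rho(\mathbf y_0)^{-1}$, so the desired bounds follow from $\sqrt t\le 1\le \rho(\mathbf y_0)^{-1}$. If $\|\mathbf x\|>2\|\mathbf y\|$, then for every $g\in G$, $\|\mathbf x-g(\mathbf y)\|\ge \|\mathbf x\|-\|\mathbf y\|\ge \|\mathbf x\|/2$, hence $\dz(\mathbf x,\mathbf y)\ge \|\mathbf x\|/2\ge |x_j|/2$. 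Applying the elementary inequalities $u^2 e^{-cu^2}\lesssim e^{-c'u^2}$ and $u\, e^{-cu^2}\lesssim e^{-c'u^2}$ with $u=\dz(\mathbf x,\mathbf y)/\sqrt{t_1}$ yields $\|\mathbf x\|^2\mathcal G_{t_1/c}\lesssim t_1\mathcal G_{t_1/c'}\lesssim t\,\mathcal G_{2t/c''}$ and $|x_j|\,\mathcal G_{t_1/c}\lesssim \sqrt{t_1}\,\mathcal G_{t_1/c'}\lesssim \sqrt t\,\mathcal G_{2t/c''}$; consequently $\sqrt t\,|x_j|\mathcal G_{2t/c}\lesssim t\,\mathcal G_{2t/c''}\le \rho(\mathbf y_0)^{-2}\mathcal G_{2t/c''}$, and $\|\mathbf x\|^2\mathcal G_{2t/c}\lesssim t\,\mathcal G_{2t/c''}\le \rho(\mathbf y_0)^{-2}\mathcal G_{2t/c''}$. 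Summing the three bounds completes the proof.
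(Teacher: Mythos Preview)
Your proof is correct. The decomposition $I+II+III$ is algebraically equivalent to the paper's $S_1+S_2+S_3$ (you use the second line of \eqref{eq:formula_T_jH_t}, the paper uses the first), and in fact $III=\sqrt t\,S_3$ while $I+II=\sqrt t\,(S_1+S_2)$; the two proofs differ only in how this common quantity is regrouped. The paper keeps the factor $(y_j-x_j)/(2t_1)$ explicit and splits $k_t-h_t$ via Lemma~\ref{lem:H_t_H_t_1_difference}, doing a case analysis on $\dz(\mathbf x,\mathbf y)$ versus $\rho(\mathbf y_0)$; you instead isolate the time shift $t\to t_1$ as $II$ (handled directly by Theorem~\ref{teo:theoremGauss}~\eqref{numitem:space} with $m=1$) and the multiplicative perturbation $e^{\kappa(\cdot)}-1$ as $I$, then absorb the resulting polynomial weight $\|\mathbf x\|^2$ by a case split on $\|\mathbf x\|$ versus $2\|\mathbf y\|$, using $\dz(\mathbf x,\mathbf y)\ge\|\mathbf x\|/2$ in the far case. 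Your route is slightly more streamlined in that it avoids invoking Lemma~\ref{lem:H_t_H_t_1_difference} and treats $I$ and $III$ by the same absorption trick; the paper's route has the minor advantage of making the role of the estimate $k_t\le h_{t_1}$ and of Lemma~\ref{lem:H_t_H_t_1_difference} more transparent. One cosmetic point: in the last paragraph you pass freely between $\mathcal G_{2t/c}$ and $\mathcal G_{t_1/c}$; this is harmless since $t\le t_1\le 2t$ and doubling give $\mathcal G_{t_1/c}\asymp\mathcal G_{2t/c'}$, but it would read more cleanly to fix one of them throughout.
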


\begin{proof}
By~\eqref{eq:formula_Dj} and~\eqref{eq:formula_T_jH_t} we have
\begin{align*}
&\frac{1}{\sqrt{t}}\left(T_{j,\mathbf{x}} k_t(\mathbf{x},\mathbf{y})-T_{j,\mathbf{x}}{h_t}(\mathbf{x},\mathbf{y})\right)=\frac{1}{\sqrt{t}}\left[\frac{y_j-x_j}{2t_1}\left(k_t(\mathbf{x},\mathbf{y})-{h_t}(\mathbf{x},\mathbf{y})\right)\right]\\&+\frac{1}{\sqrt{t}}\left[(y_j-x_j)\left(\frac{1}{2t_1}-\frac{1}{2t}\right){h_t}(\mathbf{x},\mathbf{y})\right]+\frac{1}{\sqrt{t}}\left[2x_j\left(\frac{1}{4t_1}-\frac{1}{2}\coth(2t)\right)k_t(\mathbf{x},\mathbf{y})\right]\\&=S_1+S_2+S_3.
\end{align*}
\textbf{Estimation for $S_1.$} \textit{Case 1.} $\dz(\mathbf{x},\mathbf{y}) > 2\rho(\mathbf{y}_0)$. We consider each summand separately. By Lemma~\ref{lem:t_t_1_estimations}~\eqref{numitem:t<t_1} we have $t \sim t_1$. By Theorem~\ref{teo:theoremGauss}~\eqref{numitem:space} with $m=0$,~\eqref{eq:formula_Dj}, and~\eqref{eq:connection_Dunkl_Dunkl_Hermite} we obtain
\begin{align*}
\frac{|x_j-y_j|}{t^{3/2}}k_t(\mathbf{x},\mathbf{y})& \lesssim t^{-1/2}|T_{j,\mathbf{x}}{h_{t_1}}(\mathbf{x},\mathbf{y})| \lesssim \frac{1}{t}\mathcal{G}_{t/c}(\mathbf{x},\mathbf{y})\\&\lesssim \frac{1}{t}\frac{t}{\dz^2(\mathbf{x},\mathbf{y})}\mathcal{G}_{2t/c}(\mathbf{x},\mathbf{y}) \lesssim \rho(\mathbf{y}_0)^{-2}\mathcal{G}_{2t/c}(\mathbf{x},\mathbf{y}) .
\end{align*}
The estimation for $\frac{|x_j-y_j|}{t^{3/2}}{h_t}(\mathbf{x},\mathbf{y}) \lesssim t^{-1/2}|T_{j,\mathbf{x}}{h_t}(\mathbf{x},\mathbf{y})|$ goes in the same way.
\\
\textit{Case 2.} $\dz(\mathbf{x},\mathbf{y}) \leq 2\rho(\mathbf{y}_0)$. Similarly as in the proof of Proposition~\ref{propo:H_t_H_t_1_difference}, we express the difference as a sum of two terms
\begin{align*}
&\frac{1}{\sqrt{t}}\left[\frac{y_j-x_j}{2t_1}\left(k_t(\mathbf{x},\mathbf{y})-{h_t}(\mathbf{x},\mathbf{y})\right)\right]=\frac{1}{\sqrt{t}}\left[\frac{y_j-x_j}{2t_1}\left({h_{t_1}}(\mathbf{x},\mathbf{y})-{h_t}(\mathbf{x},\mathbf{y})\right)\right]\\&+\frac{1}{\sqrt{t}}\left[\frac{y_j-x_j}{2t_1}\left(k_t(\mathbf{x},\mathbf{y})-{h_{t_1}}(\mathbf{x},\mathbf{y})\right)\right]=S_{1,1}+S_{1,2}.
\end{align*}
Since $|x_j-y_j| \leq |x_j|+|y_j| \lesssim \rho(\mathbf{y}_0)^{-1}$, by Lemma~\ref{lem:H_t_H_t_1_difference}~\eqref{numitem:H_t_H_t1_difference}, we have
\begin{align*}
|S_{1,1}| \lesssim \rho(\mathbf{y}_0)^{-1}t^{1/2}\mathcal{G}_{t/c}(\mathbf{x},\mathbf{y}) \lesssim \rho(\mathbf{y}_0)^{-2}\mathcal{G}_{2t/c}(\mathbf{x},\mathbf{y}).
\end{align*}
In order to estimate $S_{1,2}$, we use Lemma~\ref{lem:H_t_H_t_1_difference}~\eqref{numitem:Dunkl_Dunkl_Hermite_difference} and the fact that in this case $\|\mathbf{x}\|^2+\|\mathbf{y}\|^2 \lesssim \rho(\mathbf{y}_0)^{-2}$. By Theorem~\ref{teo:theoremGauss}~\eqref{numitem:space} and~\eqref{eq:formula_Dj}, we have
\begin{align*}
&|S_{1,2}| \lesssim \frac{|x_j-y_j|}{t^{3/2}}t\rho(\mathbf{y}_0)^{-2}{h_{t_1}}(\mathbf{x},\mathbf{y}) \lesssim \rho(\mathbf{y}_0)^{-2}t^{1/2}|T_{j,\mathbf{x}}{h_{t_1}}(\mathbf{x},\mathbf{y})|\lesssim \rho(\mathbf{y}_0)^{-2}\mathcal{G}_{t/c}(\mathbf{x},\mathbf{y}).
\end{align*}
\\
\textbf{Estimation for $S_2$.} Using Lemma~\ref{lem:t_t_1_estimations}~\eqref{numitem:t^3} and Theorem~\ref{teo:theoremGauss}~\eqref{numitem:Gauss} we have
\begin{align*}
|S_2| \lesssim t\mathcal{G}_{t/c}(\mathbf{x},\mathbf{y}) \lesssim \rho(\mathbf{y}_0)^{-2}\mathcal{G}_{t/c}(\mathbf{x},\mathbf{y}).
\end{align*}
\textbf{Estimation for $S_3$.} \textit{Case 1.} $\|\mathbf{x}\| \leq 4\rho(\mathbf{y}_0)^{-1}$. Then, by Lemma~\ref{lem:t_t_1_estimations}~\eqref{numitem:coth_t}, we have
\begin{align*}
|S_3| \lesssim \rho(\mathbf{y}_0)^{-1}t^{1/2}\mathcal{G}_{t/c}(\mathbf{x},\mathbf{y}) \lesssim \rho(\mathbf{y}_0)^{-2}\mathcal{G}_{t/c}(\mathbf{x},\mathbf{y}).
\end{align*}
\textit{Case 2.} $\|\mathbf{x}\| > 4\rho(\mathbf{y}_0)^{-1}$. Then $\dz(\mathbf{x},\mathbf{y}) \geq \rho(\mathbf{y}_0)^{-1}$ and $\|\mathbf{x}\| \lesssim \dz(\mathbf{x},\mathbf{y})$. Thus, by Lemma~\ref{lem:t_t_1_estimations}~\eqref{numitem:coth_t}, we have
\begin{align*}
|S_3| \lesssim \|\mathbf{x}\|t^{1/2}\mathcal{G}_{t/c}(\mathbf{x},\mathbf{y}) \lesssim \|\mathbf{x}\|\frac{t}{\dz(\mathbf{x},\mathbf{y})}\mathcal{G}_{2t/c}(\mathbf{x},\mathbf{y}) \lesssim t\mathcal{G}_{2t/c}(\mathbf{x},\mathbf{y}) \lesssim \rho(\mathbf{y}_0)^{-2}\mathcal{G}_{2t/c}(\mathbf{x},\mathbf{y}).
\end{align*}
\end{proof}

\begin{lemma}\label{lem:R_j_tail}
Let $A>1$ and $r=\min\{\rho(\mathbf{y}),\rho(\mathbf{y}_0)\}$. There exists a constant $C>0$ such that for any $\mathbf{y}_0 \in \mathbb{R}^N$ and for any $\mathbf{y} \in B(\mathbf{y}_0,A\rho(\mathbf{y}_0))$ we have
\begin{equation}
\int_{\mathbb{R}^N}\int_{r^2}^{\infty}\frac{1}{\sqrt{t}}\left|T_{j,\mathbf{x}} k_t(\mathbf{x},\mathbf{y})\right| \,dt \,dw(\mathbf{x}) \leq C.
\end{equation}
\end{lemma}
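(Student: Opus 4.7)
My plan is to exploit Lemma~\ref{lem:estimation_T_jH_t} together with Lemma~\ref{lem:t_t_1_estimations}, splitting the $t$-integration at $t=1$ because the two regimes have a very different flavour: for $t<1$ the kernel $k_t$ is comparable to $h_{t_1}$ with $t_1\sim t$ and the singular factor $1+\tfrac{1}{t}$ from Lemma~\ref{lem:estimation_T_jH_t} must be tamed by the Gaussian factor $e^{t(1/(4t_1)-\coth(2t)/2)(\|\mathbf{x}\|^2+\|\mathbf{y}\|^2)}$, whereas for $t\geq 1$ the factor $1+\tfrac{1}{t}$ is harmless but we must produce decay in $t$ coming from $t_1\gtrsim e^{2t}$. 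Throughout, Lemma~\ref{lem:rho_property} gives $r\sim \rho(\mathbf{y})\sim(1+\|\mathbf{y}\|)^{-1}$, which is the essential link between the lower limit $r^2$ and the size of $\|\mathbf{y}\|$.

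For $r^2\leq t\leq 1$ (empty if $r\geq 1$), I would use Lemma~\ref{lem:t_t_1_estimations}\eqref{numitem:t<t_1} and~\eqref{numitem:coth_t} to replace the exponential prefactor in Lemma~\ref{lem:estimation_T_jH_t} by $\exp(-t(\|\mathbf{x}\|^2+\|\mathbf{y}\|^2)/8)$, drop the harmless factor $(1+\|\mathbf{x}-\mathbf{y}\|/\sqrt{t_1})^{-1}\leq 1$, and use $1+1/t\sim 1/t$, $t_1\sim t$, to obtain
\begin{equation*}
\tfrac{1}{\sqrt{t}}|T_{j,\mathbf{x}}k_t(\mathbf{x},\mathbf{y})|\lesssim \tfrac{1}{t}\,e^{-t\|\mathbf{y}\|^2/8}\,\mathcal{G}_{t/c}(\mathbf{x},\mathbf{y}).
\end{equation*}
The $\mathbf{x}$-integral of $\mathcal{G}_{t/c}(\mathbf{x},\mathbf{y})$ is $\lesssim 1$, so matters reduce to bounding $\int_{r^2}^{1}\tfrac{1}{t}\,e^{-ct\|\mathbf{y}\|^2}\,dt$. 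The substitution $u=ct\|\mathbf{y}\|^2$ together with the crucial identity $r^2\|\mathbf{y}\|^2\sim 1$ (valid when $\|\mathbf{y}\|\gtrsim 1$; otherwise $r\sim 1$ and the integration interval is empty or of bounded length) turns this into $\int_{c'}^{\infty}u^{-1}e^{-u}\,du<\infty$.

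For $t\geq 1$, I would invoke Lemma~\ref{lem:t_t_1_estimations}\eqref{numitem:coth>C} to control the exponential prefactor by $\exp(-(\|\mathbf{x}\|^2+\|\mathbf{y}\|^2)/8)$, and Lemma~\ref{lem:t_t_1_estimations}\eqref{numitem:ext-t} plus the doubling estimate~\eqref{eq:measure_of_balls} (applied with $R=\sqrt{t_1}$, $r=1$) to get
\begin{equation*}
\mathcal{G}_{t_1/c}(\mathbf{x},\mathbf{y})\lesssim \tfrac{1}{w(B(\mathbf{x},\sqrt{t_1}))}\lesssim \tfrac{t_1^{-N/2}}{w(B(\mathbf{x},1))}\lesssim \tfrac{e^{-Nt}}{w(B(\mathbf{x},1))}.
\end{equation*}
Combining these with $1+1/t\leq 2$ reduces the $(\mathbf{x},t)$-integral to $e^{-\|\mathbf{y}\|^2/8}\int_{1}^{\infty}e^{-Nt}\,dt\cdot\int\tfrac{e^{-\|\mathbf{x}\|^2/8}}{w(B(\mathbf{x},1))}\,dw(\mathbf{x})$, and the last $\mathbf{x}$-integral is finite by~\eqref{eq:integral_of_kernel} taken at $\mathbf{y}=0$.

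The main obstacle is the small-$t$ regime, where Lemma~\ref{lem:estimation_T_jH_t} alone yields only an integrand of order $t^{-1}\mathcal{G}_{t/c}(\mathbf{x},\mathbf{y})$, whose $\mathbf{x}$-integral is logarithmically divergent at $t=0$. The resolution is to keep the oscillator Gaussian factor $e^{-ct\|\mathbf{y}\|^2}$ and use the geometric identity $r^2\|\mathbf{y}\|^2\sim 1$ coming from $r\sim\rho(\mathbf{y})$; this converts the apparently divergent log into a convergent tail integral $\int_{\sim 1}^{\infty}u^{-1}e^{-u}\,du$. Once this is spotted, the rest of the argument is a routine application of the preceding lemmas.
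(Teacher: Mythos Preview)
Your proposal is correct and follows essentially the same route as the paper: split the $t$-integral at $t=1$, use Lemma~\ref{lem:estimation_T_jH_t} together with Lemma~\ref{lem:t_t_1_estimations}\eqref{numitem:coth_t} to extract the oscillator damping $e^{-ct(\|\mathbf{x}\|^2+\|\mathbf{y}\|^2)}$ for $r^2\le t<1$, and use Lemma~\ref{lem:t_t_1_estimations}\eqref{numitem:coth>C},\eqref{numitem:ext-t} with~\eqref{eq:measure_of_balls} to produce exponential decay in $t$ for $t\ge 1$. The only cosmetic difference is that the paper writes $1+\tfrac{1}{t}\lesssim r^{-2}\lesssim \|\mathbf{x}\|^2+\|\mathbf{y}\|^2+1$ and thus reduces directly to $\int_{r^2}^{\infty}r^{-2}e^{-ctr^{-2}}\,dt$, which automatically covers both the cases $\|\mathbf{y}\|\gtrsim 1$ and $\|\mathbf{y}\|\lesssim 1$ in one stroke, whereas you keep the factor $\tfrac{1}{t}e^{-ct\|\mathbf{y}\|^2}$ and treat small $\|\mathbf{y}\|$ separately; both computations are equivalent.
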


\begin{proof}
Similarly as in the proof of Lemma~\ref{lem:atom_in_H1_Dunkl_Hermite}, we split the inner integral into integral over $[r^2,1)$ and $[1,\infty)$. For the first one, we have $1+\frac{1}{t} \lesssim r^{-2}$ and $r^{-2} \lesssim \|\mathbf{x}\|^2+\|\mathbf{y}\|^2+1$, so by Lemma~\ref{lem:estimation_T_jH_t} and Lemma~\ref{lem:t_t_1_estimations}~\eqref{numitem:coth_t} it suffices to establish
\begin{align*}
\int_{r^2}^{1}r^{-2}\exp(-ctr^{-2})\int_{\mathbb{R}^N}\mathcal{G}_{t_1/c}(\mathbf{x},\mathbf{y})\,dw(\mathbf{x})\,dt \lesssim 1.
\end{align*}
The inner integral is bounded by constant independent of $t_1$, $\mathbf{y}$, and $\mathbf{y}_0$. It remains to note that $\int_{r^2}^{1}r^{-2}\exp(-ctr^{-2})\,dt \leq \int_{r^2}^{\infty}r^{-2}\exp(-ctr^{-2})\,dt$ and the last integral is finite and independent of $r$. For the integral over $[1,\infty)$, by Lemma~\ref{lem:t_t_1_estimations}~\eqref{numitem:coth>C}, we have
\begin{align*}
&\exp\left(\frac{1}{2}\left(\frac{1}{4t_1}-\frac{1}{2}\coth(2t)\right)\left(\|\mathbf{x}\|^2+\|\mathbf{y}\|^2\right)\right) \leq \exp\left(-\frac{1}{8}\left(\|\mathbf{x}\|^2+\|\mathbf{y}\|^2\right)\right).
\end{align*}
On the other hand, by~\eqref{eq:measure_of_balls} and Lemma~\ref{lem:t_t_1_estimations}~\eqref{numitem:ext-t}, we see that
\begin{align*}
\mathcal{G}_{t_1/c}(\mathbf{x},\mathbf{y}) \lesssim \frac{1}{B(\mathbf{x},\sqrt{t_1})} \lesssim t_1^{-N/2}\frac{1}{w(B(\mathbf{x},1))} \lesssim \exp(-Nt)\frac{1}{w(B(\mathbf{x},1))}.
\end{align*}
It is easy to check that the two inequalities above and Lemma~\ref{lem:estimation_T_jH_t} imply the claim.
\end{proof}

Lemma~\ref{lem:Riesz_H_t_H_t1} and Lemma~\ref{lem:R_j_tail} imply the following corollary.
\begin{corollary}\label{coro:Riesz_difference}
Let $A>1$. There exists a constant $C>0$ such that for any $\mathbf{y}_0 \in \mathbb{R}^N$ and for any $f \in L^{1}(B(\mathbf{y}_0,A\rho(\mathbf{y}_0)),dw)$, and for $j=1,2,\ldots,N$ we have
\begin{equation}
\left\|\widetilde{R}_{j}f-R_{j}^{\rho(\mathbf{y}_0)}f\right\|_{L^1(\mathbb{R}^N,dw)} \leq C\|f\|_{L^1(\mathbb{R}^N,dw)}.
\end{equation}
\end{corollary}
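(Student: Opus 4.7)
The plan is to split $\widetilde{R}_j f - R_j^{\rho(\mathbf{y}_0)} f$ according to the natural dyadic cut at $t = \rho(\mathbf{y}_0)^2$. Using the definitions of $\widetilde{R}_j$ and $R_j^{\rho(\mathbf{y}_0)}$, I write
\begin{equation*}
\widetilde{R}_j f - R_j^{\rho(\mathbf{y}_0)} f = d_k \int_0^{\rho(\mathbf{y}_0)^2} T_j (K_t - H_t) f \, \frac{dt}{\sqrt{t}} + d_k \int_{\rho(\mathbf{y}_0)^2}^\infty T_j K_t f \, \frac{dt}{\sqrt{t}} =: J_1(\mathbf{x}) + J_2(\mathbf{x}).
\end{equation*}
The claim then reduces to showing that $\|J_1\|_{L^1} + \|J_2\|_{L^1} \lesssim \|f\|_{L^1}$, and each part will be handled by one of the two preparatory lemmas stated just above the corollary.

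For the first piece $J_1$, I use Fubini to bring the integral in $t$ and $\mathbf{x}$ through the integration defining $K_t f$ and $H_t f$, so that it suffices to show that for every $\mathbf{y} \in B(\mathbf{y}_0, A\rho(\mathbf{y}_0))$ the quantity
\begin{equation*}
\int_{\mathbb{R}^N} \int_0^{\rho(\mathbf{y}_0)^2} \frac{1}{\sqrt{t}} \bigl| T_{j,\mathbf{x}} k_t(\mathbf{x},\mathbf{y}) - T_{j,\mathbf{x}} h_t(\mathbf{x},\mathbf{y}) \bigr| \, dt \, dw(\mathbf{x})
\end{equation*}
is bounded by a constant independent of $\mathbf{y}_0$ and $\mathbf{y}$. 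Here I invoke Lemma~\ref{lem:Riesz_H_t_H_t1} (applicable since $\rho(\mathbf{y}_0) \leq 1$, so $t < 1$), which bounds the integrand by $C \rho(\mathbf{y}_0)^{-2} \mathcal{G}_{2t/c}(\mathbf{x},\mathbf{y})$. The $\mathbf{x}$-integral of $\mathcal{G}_{2t/c}(\mathbf{x},\mathbf{y})$ is uniformly bounded by \eqref{eq:integral_of_kernel}, and the remaining $dt$-integral on $[0,\rho(\mathbf{y}_0)^2]$ cancels exactly the factor $\rho(\mathbf{y}_0)^{-2}$. This yields a bound independent of $\mathbf{y}_0$, and integrating against $|f(\mathbf{y})| \, dw(\mathbf{y})$ gives $\|J_1\|_{L^1} \lesssim \|f\|_{L^1}$.

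For the second piece $J_2$, the analogous Fubini step reduces the problem to bounding
\begin{equation*}
\int_{\mathbb{R}^N} \int_{\rho(\mathbf{y}_0)^2}^\infty \frac{1}{\sqrt{t}} \bigl| T_{j,\mathbf{x}} k_t(\mathbf{x},\mathbf{y}) \bigr| \, dt \, dw(\mathbf{x}) \leq C
\end{equation*}
uniformly for $\mathbf{y} \in B(\mathbf{y}_0,A\rho(\mathbf{y}_0))$. Since Lemma~\ref{lem:rho_property} gives $\rho(\mathbf{y}) \sim \rho(\mathbf{y}_0)$ for such $\mathbf{y}$, the value $r := \min\{\rho(\mathbf{y}),\rho(\mathbf{y}_0)\}$ satisfies $r \leq \rho(\mathbf{y}_0)$, so the integration range $[\rho(\mathbf{y}_0)^2,\infty)$ is contained in $[r^2,\infty)$ and Lemma~\ref{lem:R_j_tail} applies directly, delivering the required uniform bound. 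Integrating against $|f(\mathbf{y})|$ yields $\|J_2\|_{L^1} \lesssim \|f\|_{L^1}$, completing the proof.

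There is no real obstacle: the result is essentially an assembly of the two preceding lemmas, where the only non-mechanical observation is that on the support of $f$ one has $\rho(\mathbf{y}) \sim \rho(\mathbf{y}_0)$, which allows the threshold $\rho(\mathbf{y}_0)^2$ appearing in the definition of $R_j^{\rho(\mathbf{y}_0)}$ to be compared with the threshold $r^2$ in Lemma~\ref{lem:R_j_tail}.
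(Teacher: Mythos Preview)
Your proof is correct and follows exactly the approach the paper intends: the paper simply states that the corollary is implied by Lemma~\ref{lem:Riesz_H_t_H_t1} and Lemma~\ref{lem:R_j_tail}, and you have written out the natural splitting at $t=\rho(\mathbf{y}_0)^2$ and the Fubini argument that makes this precise. One minor remark: you do not actually need Lemma~\ref{lem:rho_property} for $J_2$, since $r=\min\{\rho(\mathbf{y}),\rho(\mathbf{y}_0)\}\le\rho(\mathbf{y}_0)$ holds trivially.
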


\begin{lemma}\label{lem:atom_in_Riesz}
For any constant $A>1$ there exists a constant $C=C(A)>0$ such that for $j=1,2,\ldots,N$ and for any function $a$ such that $\supp a \subset B(\mathbf{p},r)$ for some $\mathbf{p} \in \mathbb{R}^N$, $\sup_{\mathbf{x} \in \mathbb{R}^N}|a(\mathbf{x})| \leq w(B(\mathbf{p},r))^{-1}$ and $\rho(\mathbf{p}) \leq r \leq A\rho(\mathbf{p})$ we have
\begin{equation}
\|\widetilde{R}_ja\|_{L^1(\mathbb{R}^N,dw)} \leq C.
\end{equation}
\end{lemma}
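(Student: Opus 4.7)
The plan is to reduce the statement to the local Riesz transform $R_j^{\rho(\mathbf{p})}$ for the Dunkl Laplacian and then invoke the characterization of the local Hardy space already established earlier in the paper.

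Concretely, I would split
\begin{equation*}
\widetilde{R}_j a \;=\; R_j^{\rho(\mathbf{p})} a \;+\; \bigl(\widetilde{R}_j a - R_j^{\rho(\mathbf{p})} a\bigr),
\end{equation*}
and treat the two summands separately. The second summand is directly controlled by Corollary~\ref{coro:Riesz_difference} applied with $\mathbf{y}_0 = \mathbf{p}$: since $r \le A\rho(\mathbf{p})$, we have $\supp a \subset B(\mathbf{p}, A\rho(\mathbf{p}))$, and so $\|\widetilde{R}_j a - R_j^{\rho(\mathbf{p})} a\|_{L^1} \lesssim \|a\|_{L^1}$. The size and support conditions force $\|a\|_{L^1} \leq 1$, yielding a uniform bound.

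For the principal term $R_j^{\rho(\mathbf{p})} a$, the key observation is that $a$ is essentially a local atom of $H^{1,{\rm{at}}}_{{\rm{loc}},\rho(\mathbf{p})}$ in the sense of Definition~\ref{def:atomic_Dunkl} with $T = \rho(\mathbf{p})$. Indeed, the conditions $\supp a \subset B(\mathbf{p},r)$ and $\|a\|_{L^\infty}\leq w(B(\mathbf{p},r))^{-1}$ are already built into the hypothesis, while the radius satisfies $r \geq \rho(\mathbf{p}) = T$, so that the cancellation requirement in Definition~\ref{def:atomic_Dunkl}~\eqref{numitem:cancellations} is vacuous. Consequently $\|a\|_{H^{1,{\rm{at}}}_{{\rm{loc}},\rho(\mathbf{p})}} \leq 1$, and Proposition~\ref{propo:Goldberg} upgrades this to $\|a\|_{H^1_{{\rm{loc}},\rho(\mathbf{p})}} \lesssim 1$ uniformly in $\mathbf{p}$. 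Applying Proposition~\ref{propo:local_Riesz} then gives $\|R_j^{\rho(\mathbf{p})} a\|_{L^1} \lesssim \|a\|_{H^1_{{\rm{loc}},\rho(\mathbf{p})}} \lesssim 1$, again uniformly.

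Combining the two estimates closes the argument. There is no genuine obstacle left: all the analytic heavy lifting (difference between $k_t$ and $h_t$ in Lemma~\ref{lem:Riesz_H_t_H_t1} and the tail bound of Lemma~\ref{lem:R_j_tail}, which together give Corollary~\ref{coro:Riesz_difference}, plus the local Riesz transform characterization and Goldberg-type atomic decomposition) has already been carried out. The only point to be careful about is that the constant from Corollary~\ref{coro:Riesz_difference} depends on $A$, so the constant $C(A)$ in the lemma inherits this dependence, which is exactly what the statement allows.
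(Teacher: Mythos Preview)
Your proposal is correct and follows essentially the same approach as the paper: split $\widetilde{R}_j a$ into $R_j^{\rho(\mathbf{p})}a$ plus the difference, bound the difference via Corollary~\ref{coro:Riesz_difference}, and control $R_j^{\rho(\mathbf{p})}a$ through the local Hardy norm using Proposition~\ref{propo:local_Riesz}. The only cosmetic difference is that the paper invokes Lemma~\ref{lem:atom_in_H1} directly (noting $\rho(\mathbf{p})\le r$ so that $\|a\|_{H^1_{{\rm loc},\rho(\mathbf{p})}}\le\|a\|_{H^1_{{\rm loc},r}}\lesssim 1$) rather than passing through the atomic space and Proposition~\ref{propo:Goldberg}, but this is the same estimate.
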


\begin{proof}
Thanks to Corollary~\ref{coro:Riesz_difference}, we have
\begin{align*}
\|\widetilde{R}_ja\|_{L^1(\mathbb{R}^N,dw)} &\leq \|\widetilde{R}_ja-{R^{\rho(\mathbf{p})}_{j}}a\|_{L^1(\mathbb{R}^N,dw)}+\|R_j^{\rho(\mathbf{p})}a\|_{L^1(\mathbb{R}^N,dw)} \\&\lesssim \|a\|_{L^1(\mathbb{R}^N,dw)}+\|R_j^{\rho(\mathbf{p})}a\|_{L^1(\mathbb{R}^N,dw)}.
\end{align*}
By Proposition~\ref{propo:local_Riesz}, $\|R_j^{\rho(\mathbf{p})}a\|_{L^1(\mathbb{R}^N,dw)} \lesssim \|a\|_{H^{1}_{{\rm{loc}},\rho(\mathbf{p})}}$. Recall that $\rho(\mathbf{p}) \leq r$, hence, by Lemma~\ref{lem:atom_in_H1}, $\|a\|_{H^1_{{\rm loc},\rho(\mathbf{p})}} \lesssim 1$, which completes the proof.
\end{proof}

\begin{proof}[Proof of Theorem~\ref{teo:Riesz_Hermite}]
The Riesz transform $\widetilde{R}_j$ for $f \in L^{1}(\mathbb{R}^N,dw)$ is defined in the sense of distribution (see~\cite[Section 8]{conjugate}). Therefore, for $f=\sum_{j=1}^{\infty}c_ja_j$ we have $\widetilde{R}_jf=\sum_{j=1}^{\infty}c_j\widetilde{R}_ja_j$ in the sense of distributions. In order to prove the second inequality in~\eqref{eq:Riesz}, thanks to Theorem \ref{teo:atomic_Dunkl_Hermite},  it is enough to prove that there is a constant $C>0$ such that
\begin{equation}
\label{eq:atom_constant}
 \|\widetilde{R}_ja\|_{L^1(\mathbb{R}^N,dw)} \leq C
\end{equation}
for any atom $a$ of $\mathcal{H}^{1,{{\rm{at}}}}$.  Fix an atom $a(\cdot)$ for $\mathcal{H}^{1,\rm{at}}$. Let $B(\mathbf{x},r)$ be as in Definition~\ref{def:atomic_Dunkl_Hermite}. We have two cases: $r<\rho(\mathbf{x})$ or $\rho(\mathbf{x}) \leq r \leq A\rho(\mathbf{x})$. In the first case,~\eqref{eq:atom_constant} is a consequence of Corollary~\ref{coro:Riesz_difference}, Theorem~\ref{teo:maximal_atom}, and Proposition~\ref{propo:local_Riesz}. In the second case,~\eqref{eq:atom_constant} follows by Lemma~\ref{lem:atom_in_Riesz}.

For the proof of the first inequality in~\eqref{eq:Riesz}, suppose that $f,\, \widetilde{R}_jf \in L^1(\mathbb{R}^N,dw)$ for $j=1,2,\ldots,N$. Let $\{\psi_m\}$ be the partition of unity from Lemma~\ref{lem:partition_of_unity_invariant}. Then $f=\sum_{m=1}^{\infty}f\psi_m$. Similarly as in the proof of Theorem~\ref{teo:atomic_Dunkl_Hermite}, by Proposition~\ref{propo:local_Riesz}, it is enough to show that
\begin{align*}
\sum_{m=1}^{\infty}\|R_{j}^{\rho(\mathbf{y}_m)}(f\psi_m)\|_{L^1(\mathbb{R}^N,dw)} \lesssim \|f\|_{L^1(\mathbb{R}^N,dw)}+\|\widetilde{R}_jf\|_{L^1(\mathbb{R}^N,dw)}
\end{align*}
for $j=1,2,\ldots,N$. We have
\begin{align*}
&\sum_{m=1}^{\infty}\|R_{j}^{\rho(\mathbf{y}_m)}(f\psi_m)(\cdot)\|_{L^1(\mathbb{R}^N,dw)} \leq \sum_{m=1}^{\infty}\|R_{j}^{\rho(\mathbf{y}_m)}(f\psi_m)(\cdot)-\widetilde{R}_j(f\psi_m)(\cdot)\|_{L^1(\mathbb{R}^N,dw)}\\&+\sum_{m=1}^{\infty}\|\widetilde{R}_j(f\psi_m)(\cdot)-\psi_m(\cdot)\widetilde{R}_j f(\cdot)\|_{L^1(\mathbb{R}^N,dw)}+\sum_{m=1}^{\infty}\|\psi_m(\cdot)\widetilde{R}_j f(\cdot)\|_{L^1(\mathbb{R}^N,dw)}\\&=S_1+S_2+S_3.
\end{align*}
By Corollary~\ref{coro:Riesz_difference}, $S_1 \lesssim \|f\|_{L^1(\mathbb{R}^N,dw)}$. Obviously, $S_3 \lesssim \|\widetilde{R}_jf\|_{L^1(\mathbb{R}^N,dw)}$. Thus, it remains to show that $S_2 \lesssim \|f\|_{L^1(\mathbb{R}^N,dw)}$. Fix $\mathbf{y} \in \mathbb{R}^N$. It suffices to show that
\begin{align*}
&\sum_{m=1}^{\infty} \int_{\mathbb{R}^N}\int_{0}^{\infty} \left|\frac{1}{\sqrt{t}}T_{j,\mathbf{x}} k_t(\mathbf{x},\mathbf{y})\left[\psi_m(\mathbf{x})-\psi_m(\mathbf{y}) \right]\right|\,dt\,dw(\mathbf{x}) \leq C,
\end{align*}
with $C>0$ which does not depend on $\mathbf{y}$. We have
\begin{align*}
&\sum_{m=1}^{\infty} \int_{\mathbb{R}^N}\int_{0}^{\infty} \left|\frac{1}{\sqrt{t}}T_{j,\mathbf{x}} k_t(\mathbf{x},\mathbf{y})\left[\psi_m(\mathbf{x})-\psi_m(\mathbf{y}) \right]\right|\,dt\,dw(\mathbf{x}) \\&\leq \sum_{m=1}^{\infty} \int_{\mathbb{R}^N}\int_{0}^{\rho(\mathbf{y})^2} \left|\frac{1}{\sqrt{t}}T_{j,\mathbf{x}} k_t(\mathbf{x},\mathbf{y})\left[\psi_m(\mathbf{x})-\psi_m(\mathbf{y}) \right]\right|\,dt\,dw(\mathbf{x})\\&+\sum_{m=1}^{\infty} \int_{\mathbb{R}^N}\int_{\rho(\mathbf{y})^2}^{\infty} \left|\frac{1}{\sqrt{t}}T_{j,\mathbf{x}} k_t(\mathbf{x},\mathbf{y})\right|[\psi_m(\mathbf{x})+\psi_m(\mathbf y)]\,dt\,dw(\mathbf{x})\\
&=I_1+I_2.
\end{align*}
Clearly, by Lemma~\ref{lem:R_j_tail}, $I_2\leq C$. Next, thanks to  Lemma~\ref{lem:estimation_T_jH_t}, Lemma~\ref{lem:psi_m_cancellation}, and Lemma~\ref{lem:t_t_1_estimations} we see that there is $C_1>0$ such that
\begin{align*}
\left|\frac{1}{\sqrt{t}}T_{j,\mathbf{x}} k_t(\mathbf{x},\mathbf{y})\left[\psi_m(\mathbf{x})-\psi_m(\mathbf{y}) \right]\right| \leq C_1\frac{\rho^{-1}(\mathbf{y})}{\sqrt{t}}\mathcal{G}_{t/c}(\mathbf{x},\mathbf{y})
\end{align*}
for $\mathbf{x} \in B(\mathbf{y}_m,3\rho(\mathbf{y}_m))$ or for $\mathbf{y} \in B(\mathbf{y}_m,3\rho(\mathbf{y}_m))$. Moreover, $|\psi_m(\mathbf{y})-\psi_m(\mathbf{x})|=0$ if $\mathbf{x},\mathbf{y} \in B(\mathbf{y}_m,3\rho(\mathbf{y}_m))^{c}$. {Note that there is at most $M$ indices $m$ such that $\mathbf{y} \in B(\mathbf{y}_m,3\rho(\mathbf{y}_m))$.} {Therefore, by the finite covering property of the balls $B(\mathbf{y}_m,3\rho(\mathbf{y}_m))$,
\begin{align*}
I_1\leq &C_1\sum_{m=1}^{\infty} \int_{B(\mathbf{y}_m,3\rho(\mathbf{y}_m))}\int_{0}^{\rho^2(\mathbf{y})} \frac{\rho^{-1}(\mathbf{y})}{\sqrt{t}}\mathcal{G}_{t/c}(\mathbf{x},\mathbf{y}) \,dt\,dw(\mathbf{x})\\&+C_1M\int_{\mathbb{R}^N}\int_{0}^{\rho^2(\mathbf{y})} \frac{\rho^{-1}(\mathbf{y})}{\sqrt{t}}\mathcal{G}_{t/c}(\mathbf{x},\mathbf{y}) \,dt\,dw(\mathbf{x})\\
&\leq 2C_1M\int_{0}^{\rho^2(\mathbf{y})} \int_{\mathbb{R}^N} \frac{\rho^{-1}(\mathbf{y})}{\sqrt{t}}\mathcal{G}_{t/c}(\mathbf{x},\mathbf{y}) \,dw(\mathbf{x})\,dt
\leq C.
\end{align*} }
\end{proof}

{\bf Acknowledgment.} The author would like to thank Jacek {Dziuba\'nski} and Jean-Philippe Anker for discussions on the topics  
considered in the paper.

\end{document}